\setlist[enumerate]{label=\arabic*.}
\numberwithin{equation}{section}
\newtheorem{Theorem}[equation]{Theorem}
\newtheorem{Lemma}[equation]{Lemma}
\newtheorem{Proposition}[equation]{Proposition}
\newtheorem{Corollary}[equation]{Corollary}
\theoremstyle{definition}
\newtheorem{Definition}[equation]{Definition}
\newtheorem{Remark}[equation]{Remark}
\newcommand*\isomarrow{\xrightarrow{\raisebox{-0.35em}{\smash{\ensuremath{\sim}}}}}  
\newcommand{\alg}{\mathrm{alg}}
\newcommand{\perf}{\mathrm{perf}}
\newcommand{\N}{\mathbb{N}}
\newcommand{\Z}{\mathbb{Z}}
\newcommand{\Q}{\mathbb{Q}}
\newcommand{\F}{\mathbb{F}}
\newcommand{\R}{\mathbb{R}}
\renewcommand{\O}{\mathcal{O}}
\newcommand{\plim}{\textstyle \varprojlim}
\newcommand{\clim}{\textstyle \varinjlim}
\newcommand{\im}{\operatorname{im}}
\newcommand{\coker}{\operatorname{coker}}
\newcommand{\Perf}{\operatorname{Perf}}
\newcommand{\Spa}{\operatorname{Spa}}
\newcommand{\id}{{\operatorname{id}}}
\newcommand{\an}{{\mathrm{an}}}
\newcommand{\et}{{\operatorname{\acute{e}t}}}
\newcommand{\proet}{{\operatorname{pro\acute{e}t}}}
\newcommand{\qproet}{{\operatorname{qpro\acute{e}t}}}
\newcommand{\etqcqs}{{\operatorname{\acute{e}t-qcqs}}}
\newcommand{\red}{\mathrm{red}}
\newcommand{\wt}{\widetilde}
\renewcommand{\lim}{\varprojlim}
\newcommand{\aeq}{\stackrel{a}{=}}\newcommand{\cH}{{\ifmmode \check{H}\else{\v{C}ech}\fi}}
\newcommand{\Char}{\operatorname{char}}
\newcommand{\np}{{\text{-}\mathrm{np}}}
\address{University of Frankfurt,
	Robert-Mayer-Str. 6-8,
	60325 Frankfurt am Main, Germany}
\email{heuer@math.uni-frankfurt.de}
\begin{document}
	\title{The Primitive Comparison Theorem in characteristic $p$}
	\author{Ben Heuer}
	\subjclass[2020]{14G22, 14F30, 14G45.}
	\maketitle
\begin{abstract}
	We prove an analogue of Scholze's Primitive Comparison Theorem for proper rigid spaces over an algebraically closed non-archimedean field $K$ of characteristic $p$. This implies a v-topological version of the Primitive Comparison Theorem for proper finite type morphisms $f:X\to Y$ of analytic adic spaces over $\Z_p$. We deduce new cases of the Proper Base Change Theorem for \mbox{$p$-torsion} coefficients and the Künneth formula in this setting.
\end{abstract}
\section{Introduction}
Let $C$ be an algebraically closed complete field extension of $\Q_p$.
The so-called Primitive Comparison Theorem, due to Scholze \cite[Theorem~1.3]{Scholze_p-adicHodgeForRigid}\cite[Theorem 3.17]{ScholzeSurvey}, following earlier work of Faltings \cite{Faltingsalmostetale}, is a fundamental result in $p$-adic Hodge theory relating \'etale and coherent cohomology: It says that for any proper rigid space $X$ over $C$, the natural map
\[
	H^n_{\et}(X,\F_p)\otimes  \O_C/p\to H^n_{\et}(X,\O^+/p)
\]
is an almost isomorphism
for any $n\geq 0$.
The Primitive Comparison Theorem has seen plenty of applications in recent years, evidencing its importance: Originally introduced by Scholze in the proof that $H^n_\et(X,\F_p)$ is finite \cite[Theorem~5.1]{Scholze_p-adicHodgeForRigid}, it has since been used as a crucial ingredient to construct the Hodge--Tate spectral sequence \cite[Theorem~3.20]{ScholzeSurvey}, to prove \mbox{$p$-adic} Poincar\'e duality \cite[Theorem~1.1.4]{Zavyalov_acoh}\cite[Theorem 1.1.1]{mann2022padic}, various results on the arithmetic of Shimura varieties \cite[Theorem~IV.2.1]{torsion}\cite[Theorem~4.4.2]{PanLocallyAnalytic}, local constancy of  higher direct images  in \'etale cohomology \cite[Theorem~10.5.1]{ScholzeBerkeleyLectureNotes}, and Proper Base Change for $p$-torsion coefficients \cite[Theorem~3.15]{BhattHansen_Zar_constructible},  to name just a few applications.

\subsection{The Primitive Comparison Theorem over $\F_p(\!(t)\!)$}

The first goal of this article is to prove a very close analogue of the Primitive Comparison Theorem in characteristic $p$:
\begin{Theorem}\label{t:PCT-char-p-intro}
	Let $K$ be an algebraically closed non-archimedean field over $\F_p(\!(t)\!)$. Let $X$ be a  proper rigid space over $K$. Then for any $n\geq 0$, the natural map
	\[H^n_{\et}(X,\F_p)\otimes_{\F_p}\O_K/t\isomarrow H^n_{\et}(X, \O^+/t).\]
	is an almost isomorphism which is compatible with Frobenius actions.
\end{Theorem}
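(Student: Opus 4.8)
The plan is to reduce the statement to a single property of the complex $C:=R\Gamma_{\et}(X,\O^+_X/t)$ and then to use that, in characteristic $p$, one has the Artin--Schreier sequence directly on $X_{\et}$; this replaces Scholze's detour through tilting by a genuinely integral computation. Since $\O_K/t$ is free, hence flat, over $\F_p$, it suffices to produce a Frobenius-equivariant almost isomorphism of complexes $R\Gamma_{\et}(X,\F_p)\otimes_{\F_p}\O_K/t\to C$. I would first verify that
\[ 0\longrightarrow\F_p\longrightarrow\O^+_X/t\xrightarrow{\ F-1\ }\O^+_X/t\longrightarrow 0 \]
is exact on $X_{\et}$, where $F(a)=a^p$: surjectivity holds because $Y^p-Y=g$ defines a finite \'etale cover and any solution over a ring of power-bounded functions is again power-bounded (if $|y|>1$ then $|y|^{p}=|y^p|=|y+g|\leq|y|$, a contradiction), while exactness on the left uses $\F_p\cap t\O^+_X=0$ and $t$-adic completeness of $\O^+_X$ on affinoids, which promotes a section with $f^p\equiv f\bmod t$ to an honest Frobenius-fixed section by the $t$-adically convergent recursion $v=(\tilde f^p-\tilde f)/t+t^{p-1}v^p$. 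Hence $R\Gamma_{\et}(X,\F_p)=\operatorname{fib}(C\xrightarrow{F-1}C)$, $F$-equivariantly; the same computation for $\O_K$ itself, now using that $K$ is algebraically closed for the surjectivity of $\lambda\mapsto\lambda^p-\lambda$, gives $\F_p=\operatorname{fib}(\O_K/t\xrightarrow{\phi-1}\O_K/t)$ with $\phi$ the $p$-th power map. The theorem is therefore equivalent to the claim that the canonical map
\[ \operatorname{fib}\!\bigl(C\xrightarrow{F-1}C\bigr)\otimes_{\F_p}\O_K/t\ \longrightarrow\ C, \]
built from $\F_p\hookrightarrow\O^+_X/t$ and the module structure (so $F$ acts as $\operatorname{id}\otimes\phi$ on the source), is an almost isomorphism; Frobenius-compatibility is then automatic.

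The technical heart is to show that $C$ is an almost perfect complex of $\O_K/t$-modules --- in particular that $H^n_{\et}(X,\F_p)$ is finite-dimensional --- and that the $\phi$-semilinear operator $F$ on $C$ is of \emph{unit-root} type, i.e.\ its $\O_K/t$-linearization $\phi^{*}C\to C$ is an almost isomorphism. I would prove this along the lines of Scholze's finiteness theorem: cover $X$ by finitely many affinoids, reduce the almost-perfectness by a Noether-normalization/finite surjective cover to the case of projective space, and there compute by cohomology-and-base-change from perfectoid covers on which $\O^+/t$ is almost acyclic (Faltings' almost purity). The real novelty is that in characteristic $p$ these perfectoid covers cannot be taken pro-\'etale --- the toric tower $T\mapsto T^p$ is not \'etale --- so one produces them v-topologically and runs the entire argument on the v-site, using the comparison $R\Gamma_{\et}(-,\O^+/t)\aeq R\Gamma_v(-,\O^+_v/t)$ and the existence, for any qcqs analytic space over $K$, of a v-cover by a strictly totally disconnected perfectoid space; on such a cover $\O^+/t$ is the mod-$t$ reduction of an integral perfectoid ring $R^+$, for which $F$ is (almost) the isomorphism $R^+/t^{1/p}\xrightarrow{\sim}R^+/t$, whence the unit-root property passes to $C$ by functoriality of the \v{C}ech complex. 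I expect this step --- combining finiteness with the unit-root structure, and in particular arranging the perfectoid v-covers and the almost-bookkeeping correctly in characteristic $p$ --- to be the main obstacle and the bulk of the work; by contrast the first and third paragraphs are comparatively formal.

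Finally, it remains to prove a purely algebraic descent lemma: if $C$ is an almost perfect complex of $\O_K/t$-modules with a $\phi$-semilinear endomorphism $F$ whose linearization is an almost isomorphism, then --- $K$ being algebraically closed, so that $\operatorname{Spec}\O_K/t$ is strictly Henselian with trivial \'etale $\F_p$-cohomology --- the natural map $\operatorname{fib}(C\xrightarrow{F-1}C)\otimes_{\F_p}\O_K/t\to C$ is an almost isomorphism. Using the Postnikov filtration and almost-perfectness, this reduces to a finite projective $\O_K/t$-module $M$ with unit-root $F$, where it is the statement that $M\xrightarrow{F-1}M$ is surjective and $\ker(F-1)=M^{F=1}$ satisfies $M^{F=1}\otimes_{\F_p}\O_K/t\xrightarrow{\sim}M$ --- the standard equivalence between unit-root $F$-modules and $\F_p$-local systems over a strictly Henselian base, proved by the same Artin--Schreier/recursion argument as in the first paragraph, now carried with coefficients. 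Feeding $\operatorname{fib}(C\xrightarrow{F-1}C)=R\Gamma_{\et}(X,\F_p)$ back in gives the desired Frobenius-equivariant almost isomorphism.
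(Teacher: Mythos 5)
Your proposal takes a genuinely different route from the paper's, closer in spirit to the Bhatt--Lurie Riemann--Hilbert correspondence, which the paper cites only as a related result near \Cref{c:RH}. The paper does not argue mod $t$ at all: it first establishes that $H^n_\et(X^\perf,\mathbb L\otimes\O^+)$ is almost isomorphic to $\O_K^r$ compatibly with Frobenius, via Kiehl's finiteness theorem, the $\approx$-machinery of \cite[\S2]{Scholze_p-adicHodgeForRigid}, and a colimit-along-Frobenius argument (\Cref{l:ses-for-H^i(X,O+)}, \Cref{l:colim_phi-M-almost-finite-free}, \Cref{p:H^i(X^perf,O^+)}); only then does it apply Artin--Schreier, and it does so for $\O$ on $X^\perf_\proet$, not for $\O^+/t$ on $X_\et$. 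Your first paragraph --- the exactness of $0\to\F_p\to\O^+/t\xrightarrow{\,F-1\,}\O^+/t\to 0$ on $X_\et$, including the $t$-adic recursion for left-exactness --- is correct and is a genuine alternative starting point.

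The gap is in your final descent lemma over $\O_K/t$, and it is instructive precisely because it explains why the paper lifts to $\O_K$. The Frobenius $\phi$ on $\O_K/t$ is surjective but has kernel $t^{1/p}\O_K/t$, hence is not flat. Consequently (i) the unit-root hypothesis $L\phi^*C\aeq C$ on the \emph{complex} does not automatically pass to the cohomology groups $H^n(C)$, since $\phi^*$ and cohomology do not commute without flatness; and (ii) the ``standard equivalence between unit-root $F$-modules and $\F_p$-local systems'' is standard over a perfect or $\phi$-flat base, which $\O_K/t$ is not. Your reduction ``via Postnikov filtration and almost-perfectness'' to finite projective modules silently assumes that the $H^n(C)$ are almost free, but a priori an almost perfect complex over $\O_K/t$ can have $t^\epsilon$-torsion cohomology. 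It does turn out that the unit-root condition \emph{forces} this torsion to be almost zero (since $\phi^*(\O_K/t^\epsilon)\cong\O_K/t^{\min(p\epsilon,1)}$, any almost isomorphism $\phi^*M\aeq M$ rules out $t^\epsilon$-torsion with $\epsilon>0$), but this observation has to be made explicit and propagated up the Postnikov tower; it is in effect what the paper's $\clim_\phi$ argument in \Cref{l:colim_phi-M-almost-finite-free} accomplishes one level up over $\O_K$, where Frobenius is bijective and the issue disappears. So the step you flag as ``the main obstacle'' hides a further nontrivial gap beyond the finiteness argument you sketch, and on close inspection the machinery needed to close it is essentially that of \Cref{p:H^i(X^perf,O^+)} transported to $\O_K/t$ --- your route is viable but not shorter.
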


At first glance, this may seem surprising from the perspective that in general, the dimension of the \'etale cohomology $\dim_{\F_p}H^n_{\et}(X,\F_p)$ can be strictly smaller than that of the coherent cohomology $\dim_{K}H^n_{\et}(X,\O)$. However, we will argue that \Cref{t:PCT-char-p-intro} should be regarded as a result about the perfection $X^\perf$. In fact, we deduce it from the following stronger result:

\begin{Theorem}\label{t:thm-2-intro}
	 Let $\mathbb L$ be any $\F_p$-local system on $X_\et$. Then for any $n\geq 0$, the natural map
	\[ H^n_{\et}(X,\mathbb L)\otimes_{\F_p}\O_K\to H^n_\et(X^\perf,\mathbb L\otimes_{\F_p}\O^+)\]
	is an almost isomorphism, compatible with Frobenius actions. Second, the sequence
	\[0\to H^n_\an(X,\mathbb L\otimes\O)^{\phi\np}\to H^n_\an(X,\mathbb L\otimes\O)\to H^n_\an(X^\perf,\mathbb L\otimes\O)\to 0\]
	is short exact,
	where $-^{\phi\np}$ denotes the subspace on which the Frobenius $\phi$ is nilpotent.
\end{Theorem}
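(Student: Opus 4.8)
The plan is to treat the two assertions separately, proving first the short exact sequence — which is in essence a statement about coherent cohomology — and then feeding it into the proof of the almost isomorphism. Throughout I use that, since $K$ is perfect and $X$ is of finite type, $X^\perf=\plim(X\xleftarrow{\phi}X\xleftarrow{\phi}\cdots)$ is a perfectoid space over $K$ and the transition maps, the absolute Frobenius $\phi$, are finite; hence the projection $\pi\colon X^\perf\to X$ is a cofiltered limit of universal homeomorphisms and induces an equivalence $X^\perf_\et\simeq X_\et$, so that $H^n_\et(X,\mathbb L)=H^n_\et(X^\perf,\mathbb L)$. Moreover $\O^+_{X^\perf}$ is the $t$-adic completion of the perfect sheaf $\clim_\phi\O^+_X$, and $\mathbb L\otimes_{\F_p}\O_{X^\perf}=\pi^*(\mathbb L\otimes_{\F_p}\O_X)$ is the pullback of the vector bundle $\mathbb L\otimes_{\F_p}\O_X$ on $X$ (here one uses that an $\F_p$-local system twisted by $\O$ descends from $X_\et$ to a vector bundle on $X_\an$).

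For the short exact sequence: as $X^\perf\to X$ is a cofiltered limit along affine morphisms and $\mathbb L\otimes\O$ is coherent, coherent cohomology commutes with the limit, giving $R\Gamma_\an(X^\perf,\mathbb L\otimes\O)=\clim\bigl(R\Gamma_\an(X,\mathbb L\otimes\O)\xrightarrow{\phi}R\Gamma_\an(X,\mathbb L\otimes\O)\xrightarrow{\phi}\cdots\bigr)$, where $\phi$ is the Frobenius-semilinear operator induced by the absolute Frobenius of $X$ and the canonical identification $F^*\mathbb L\cong\mathbb L$ (the $t$-adic completion in $\O^+_{X^\perf}$ is harmless because the colimit on cohomology is already a finite-dimensional, hence complete, $K$-vector space). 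By Kiehl finiteness each $V^n:=H^n_\an(X,\mathbb L\otimes\O)$ is a finite-dimensional $K$-vector space carrying $\phi$, and for any such $(V,\phi)$ one has $\clim(V\xrightarrow{\phi}V\xrightarrow{\phi}\cdots)=V/V^{\phi\np}$: the kernel of $V\to\clim$ equals $\bigcup_m\ker(\phi^m)=V^{\phi\np}$, and on the finite-dimensional quotient $V/V^{\phi\np}$ the induced $\phi$ is injective, hence bijective, so the colimit is that quotient. This yields the sequence, and all maps are Frobenius-equivariant by construction.

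For the almost isomorphism, by the above it suffices to show that $R\Gamma_\et(X^\perf,\mathbb L)\otimes_{\F_p}\O_K\to R\Gamma_\et(X^\perf,E)$, $E:=\mathbb L\otimes_{\F_p}\O^+_{X^\perf}$, is an almost isomorphism — i.e.\ the Primitive Comparison Theorem for the perfectoid space $X^\perf$, which is its own tilt since it has characteristic $p$. The organizing tool is the Artin--Schreier sequence $0\to\mathbb L\to E\xrightarrow{1-\phi}E\to 0$ on $X^\perf_\et$, whose exactness uses that an Artin--Schreier cover $x^p-x=a$ is finite \'etale (hence stays perfectoid, by almost purity) and that $x$, a root of a monic polynomial over $\O^+_{X^\perf}$, lies in $\O^+$ of the cover. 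Writing $M^n:=H^n_\et(X^\perf,E)$, inverting $t$ (legitimate since $X^\perf_\et$ is a coherent topos) and using the first part identifies $M^n[1/t]\cong H^n_\an(X^\perf,\mathbb L\otimes\O)\cong V^n/(V^n)^{\phi\np}$, a finite-dimensional $K$-vector space on which $\phi$ is bijective; the Artin--Schreier sequence for $\O_{X^\perf}$ (same argument, no integrality needed) together with the surjectivity of $1-\phi$ on finite-dimensional $K$-vector spaces with bijective semilinear $\phi$ then shows $H^n_\et(X^\perf,\mathbb L)\isomarrow(M^n[1/t])^{\phi=1}$, and triviality of \'etale $\phi$-modules over the algebraically closed $K$ upgrades this to: the comparison map $H^n_\et(X^\perf,\mathbb L)\otimes_{\F_p}\O_K\to M^n$ is an isomorphism after inverting $t$, hence — the source being $t$-torsion-free — injective with $t$-power-torsion cokernel.

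The main obstacle is then the remaining point, that this cokernel is almost zero, which I reduce to the \emph{almost finite generation} of $M^n=H^n_\et(X^\perf,\mathbb L\otimes\O^+)$ over $\O_K$ — the ``finiteness'' half of the Primitive Comparison Theorem. Granting it (so that, up to almost isomorphism, $M^n$ is a finite \'etale $\phi$-module over $\O_K$), one has that $1-\phi$ is almost surjective on every $M^n$ (solve $x-x^p=a$ by roots of monic polynomials, which lie in $\O_K$), so the Artin--Schreier long exact sequence gives $H^n_\et(X^\perf,\mathbb L)\aeq(M^n)^{\phi=1}$, while $(M^n)^{\phi=1}\otimes_{\F_p}\O_K\aeq M^n$ since an \'etale $\phi$-module over $\O_K$ is almost trivial; composing finishes the proof. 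The almost finiteness is genuinely the hard input: it does not follow from the Artin--Schreier sequence and the (separately established, via Artin--Schreier on $X_\et$ and Kiehl finiteness) finiteness of $H^n_\et(X,\mathbb L)$, since an $\O_K$-module that becomes finite-dimensional after $[1/t]$ and on which $1-\phi$ is almost bijective may still contain infinitely $t$-divisible elements — indeed the cohomology of the uncompleted colimit $\clim_\phi\O^+_X$ fails to be almost finitely generated in general. I expect to obtain it by adapting Scholze's approximation argument: use his almost acyclicity of $\O^+$ on affinoid perfectoids to compute $R\Gamma_\et(X^\perf,E)$ by a finite \v{C}ech complex for a cover of $X^\perf$ by affinoid perfectoids pulled back from a finite affinoid cover of $X$, and run the approximation with the finiteness of the coherent cohomology $V^n$ (from properness of $X$) as the bootstrapping input. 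All identifications above are functorial, hence compatible with the Frobenius actions.
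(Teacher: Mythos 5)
Your strategy mirrors the paper's closely: identify étale cohomology on $X^\perf$ with a Frobenius colimit, deduce the exact sequence from the semi-linear fact $\clim_\phi V = V/V^{\phi\np}$ for finite-dimensional $V$, run Artin--Schreier for the almost isomorphism, and reduce the crux to almost finite generation of $M^n:=H^n_\et(X^\perf,\mathbb L\otimes\O^+)$. That is exactly the architecture of \Cref{t:PC-in-char-p} and \Cref{p:H^i(X^perf,O^+)}. However, two points deserve scrutiny. In the derivation of the short exact sequence you assert that passing from $\clim_\phi\O^+_X$ to its $t$-adic completion $\O^+_{X^\perf}$ is harmless ``because the colimit on cohomology is already finite-dimensional, hence complete.'' The conclusion is true, but the justification is too thin: completing a complex can change its cohomology even when the uncompleted cohomology groups are complete, unless the differentials are controlled (strictness, or bounded torsion of boundaries). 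The paper supplies exactly this control in \Cref{l:ses-for-H^i(X,O+)} (after reducing to $X$ reduced, properness and the Banach Open Mapping Theorem exhibit $\cH^n(\mathcal U,\mathbb L\otimes\O^+)$ as an extension of an almost finite free module by a bounded-torsion one) combined with \Cref{l:colim_phi-M-almost-finite-free} (Frobenius colimits of almost finite free $\O_K$-modules are again almost finite free). Without some equivalent input, neither the SES step nor the later ``inverting $t$'' identification of $M^n[1/t]$ is actually established.

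Relatedly, your parenthetical claim that ``the cohomology of the uncompleted colimit $\clim_\phi\O^+_X$ fails to be almost finitely generated in general'' is not correct in the form relevant here: \Cref{p:H^i(X^perf,O^+)}.2 proves precisely that $\clim_\phi\cH^n(\mathcal U,\mathbb L\otimes\O^+)$ is almost finite free (again after passing to the reduced space), and this is simultaneously what makes the completion harmless and what delivers the almost finite generation you are after. So the worry that motivates your final paragraph is unfounded, but the positive statement it points at is genuinely the technical heart of the proof, and it is not a direct transcription of Scholze's characteristic-zero approximation: the role of smoothness/properness there is here played by the combination of bounded-torsion control (\Cref{l:ses-for-H^i(X,O+)}) and the semi-linear colimit lemma (\Cref{l:colim_phi-M-almost-finite-free}), which your sketch gestures at but does not supply.
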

In particular, this yields a canonical and functorial decomposition
\[ H^n_\an(X,\mathbb L\otimes \O)= H^n_\an(X,\mathbb L\otimes \O)^{\phi\np}\oplus H^n_{\et}(X,\mathbb L)\otimes_{\F_p}K.\]

As an example application, \Cref{t:thm-2-intro} implies the analogue of \cite[Theorem~1.1]{Scholze_p-adicHodgeForRigid} in characteristic $p$, namely that $H^n_{\et}(X,\mathbb L)$ is a finite group for any $n\in \N$. Indeed, we have
\[\dim_{\F_p}H^n_{\et}(X,\mathbb L)\leq \dim_KH^n_\an(X,\mathbb L\otimes \O)\]
where $\mathbb L\otimes \O$ is a vector bundle, and equality holds if and only if $\phi$ is bijective on $H^n_\an(X,\O)$.
\subsection{A v-topological relative version over $\Z_p$}
Scholze deduces from the Primitive Comparison Theorem a relative version for any smooth proper morphism $f:X\to S$ of locally Noetherian adic spaces over $\Q_p$: For any $\F_p$-local system $\mathbb L$ on $X$, there is an almost isomorphism
\[R^nf_{\et\ast}\mathbb L\otimes \O^+_S/p\to R^nf_{\et\ast}(\mathbb L\otimes\O^+_X/p)\] of sheaves on $S_\et$ (we refer to \S\ref{s:almost-mathematics} for the definition of almost isomorphisms in this setting).
As a consequence of \Cref{t:PCT-char-p-intro},  this works more generally when $S$ is an analytic adic space over $\Z_p$. In fact, we also prove a more general variant of this comparison that works for the much larger v-site $S_v$ of $S$, as well as for other topologies. This is implicitly a comparison of cohomology for the  base-change $X\times_SS'\to S'$ to any perfectoid space $S'\to S$:

\begin{Theorem}[v-topological Primitive Comparison]\label{intro:PCT-perfectoid}
	Let $S$ be any analytic adic space over $\Z_p$ with a topologically nilpotent unit $\varpi\in \O(S)$. Let $f:X\to S$ be a proper morphism of finite type of adic spaces. Let $\mathbb L$ be an $\F_p$-local system on $X$. Then for any $n\geq 0$, the natural morphism
	\[ (R^nf_{v\ast}\mathbb L)\otimes_{\F_p} \O^+_S/\varpi\to R^nf_{v\ast}(\mathbb L\otimes_{\F_p} \O^+_X/\varpi)\]
is an almost isomorphism of sheaves on $S_v$.
\end{Theorem}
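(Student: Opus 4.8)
The plan is to reduce \Cref{intro:PCT-perfectoid} to the absolute Primitive Comparison Theorems --- \Cref{t:thm-2-intro} in characteristic $p$, and Scholze's theorem \cite[Theorem~3.17]{ScholzeSurvey} in characteristic $0$ --- by exploiting that both sides of the asserted map are v-sheaves on $S$. First I would use that the assertion is v-local on $S$, that the v-site of an analytic adic space over $\Z_p$ has a basis of affinoid perfectoid spaces, and that proper base change makes the formation of $R^{n}f_{v\ast}(-)$ compatible with pullback along any $S'\to S$, to reduce to $S=\Spa(R,R^{+})$ affinoid perfectoid. Then, since a morphism of v-sheaves is an almost isomorphism precisely when it is so on stalks at the geometric points $\Spa(C,C^{+})\to S$ with $C$ an algebraically closed perfectoid field, and since the valuative part of $C^{+}$ is invisible to the (v- or \'etale) cohomology of a proper morphism, I would reduce to $S=\Spa(C,\O_C)=:\Spa C$. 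Over this geometric point the stalk of $R^{n}f_{v\ast}\mathcal F$ is $H^{n}_{v}(X_C,\mathcal F)$, because the higher v-cohomology over $\Spa C$ of the sheaves $R^{j}f_{v\ast}\mathcal F$ vanishes (strict total disconnectedness, together with the almost-acyclicity of $\O^{+}/\varpi$ on affinoid perfectoids and the fact that $R^{j}f_{v\ast}\mathbb L$ is a sheaf of $\F_p$-modules). Thus \Cref{intro:PCT-perfectoid} becomes the statement that for $X$ proper of finite type over $C$ the natural map
\[
 H^{n}_{v}(X,\mathbb L)\otimes_{\F_p}\O_C/\varpi \longrightarrow H^{n}_{v}\!\big(X,\ \mathbb L\otimes_{\F_p}\O^{+}_X/\varpi\big)
\]
is an almost isomorphism.

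In the case $\Char C=p$, the field $C$ is algebraically closed and non-archimedean over $\F_p(\!(t)\!)$ via $t\mapsto\varpi$, so $X$ is a proper rigid space over $C$ as in \Cref{t:PCT-char-p-intro}. Here the geometric input is that the perfection $X^{\perf}\to X$ is a v-cover by a perfectoid space whose \v{C}ech nerve is constant --- the diagonal $X^{\perf}\to X^{\perf}\times_X X^{\perf}$ is an isomorphism because $X^{\perf}$ is perfect --- so $R\Gamma_{v}(X,\mathcal F)\isomarrow R\Gamma_{v}(X^{\perf},\mathcal F)$ for every v-sheaf $\mathcal F$. Applying this to $\mathbb L$ and to $\mathbb L\otimes\O^{+}_X/\varpi$, using that on the perfectoid space $X^{\perf}$ one has $R\Gamma_{v}\aeq R\Gamma_{\et}$ for both coefficients, and using the topological invariance of the \'etale site under the universal homeomorphism $X^{\perf}\to X$ (so $H^{n}_{\et}(X^{\perf},\mathbb L)=H^{n}_{\et}(X,\mathbb L)=H^{n}_{v}(X,\mathbb L)$), the displayed map is identified with
\[
 H^{n}_{\et}(X,\mathbb L)\otimes_{\F_p}\O_C/\varpi \longrightarrow H^{n}_{\et}\!\big(X^{\perf},\ \mathbb L\otimes_{\F_p}\O^{+}_{X^{\perf}}/\varpi\big),
\]
which is \Cref{t:thm-2-intro} reduced modulo $\varpi$: one tensors the almost isomorphism $H^{n}_{\et}(X,\mathbb L)\otimes_{\F_p}\O_C\aeq H^{n}_{\et}(X^{\perf},\mathbb L\otimes\O^{+})$ with $\O_C/\varpi$ over $\O_C$, noting that the $\varpi$-torsion of $\O_C/p$ --- hence the higher-$\mathrm{Tor}$ and reduction-mod-$\varpi$ discrepancies --- is almost zero since $\O_C$ is a valuation ring.

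In the case $\Char C=0$, if $p\in\O_C^{\times}$ then $\mathbb L\otimes_{\F_p}\O^{+}_X/\varpi=0$ and there is nothing to prove; otherwise $p$ is topologically nilpotent and $C$ is a perfectoid field of characteristic $0$. There is now no perfection, but $\O^{+}_X/\varpi$ is, up to the almost structure, the restriction to $X_{v}$ of the corresponding pro-\'etale sheaf, so $R\Gamma_{v}(X,\mathbb L\otimes\O^{+}/\varpi)\aeq R\Gamma_{\proet}(X,\mathbb L\otimes\O^{+}/\varpi)$, and the displayed map identifies with Scholze's Primitive Comparison Theorem for the local system $\mathbb L$, again reduced modulo $\varpi$ at the cost of only an almost-zero discrepancy. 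The hard part of the whole argument, I expect, is not this final comparison but the first paragraph: making precise the compatibility of $R^{n}f_{v\ast}$ with perfectoid base change for the a priori merely adic, possibly non-smooth proper morphism $f$, and the identification of the stalks of $R^{n}f_{v\ast}\mathcal F$ with $H^{n}_{v}(X_C,\mathcal F)$ over a geometric point. Once that bookkeeping is in place, the characteristic $p$ and characteristic $0$ cases are just \Cref{t:thm-2-intro} and Scholze's theorem, glued over the general analytic base $S$ over $\Z_p$ by v-descent.
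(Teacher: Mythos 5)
Your approach matches the paper's: the key step is reducing to the absolute case over a geometric point $\Spa(C,C^+)$ by checking stalks of the \'etale sheafification of the v-cohomology presheaf, and then invoking \Cref{t:PC-in-char-p} (characteristic $p$) or Scholze's theorem in the form of \Cref{t:PCT-rigid} (characteristic $0$). The step you correctly flag as ``the hard part'' --- identifying the stalk of $U\mapsto H^n_v(X\times_S U,\mathcal F)$ at a geometric point $s:\Spa(C,C^+)\to S$ with $H^n_v(X\times_S s,\mathcal F)$ --- is precisely the content of the paper's \Cref{l:stalk-of-et-sheaf-of-v-cohom}, which is proved via approximation of the geometric point by \'etale neighborhoods, \v{C}ech cohomology over a strictly totally disconnected cover, and the continuity result \Cref{l:approx-G}; note also that the paper does not need your extra reduction from $(C,C^+)$ to $(C,\O_C)$ in the relative argument, since that reduction is already built into the absolute statements via \Cref{l:reduce-PCT-(C-Cp)-to-(C-OC)}.
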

Here $S$ could be an adic space over $\Q_p$ or $\F_p(\!(t)\!)$, but we also allow e.g.\ ``pseudo-rigid spaces'' like $S=\Spa(\Z_p\llbracket T\rrbracket\langle \frac{T}{p}\rangle[\frac{1}{T}])$ which have geometric fibres in characteristic $0$ as well as $p$. Such spaces are of interest e.g.\ to the geometry of eigenvarieties in the context of Coleman's Spectral Halo \cite{AIP}. With an eye towards applications to $p$-adic families of Galois representations, this is one motivation to study ``$t$-adic Hodge theory'' over $\F_p(\!(t)\!)$.

\subsection{Proper Base Change}
As an application  of the Primitive Comparison Theorem  over $\F_p(\!(t)\!)$, we prove new cases of the Proper Base Change Theorem in the context of \'etale cohomology of adic spaces: Huber showed such a result in the case of prime-to-$p$-torsion coefficients \cite[Theorem~4.4.1]{huber2013etale}.
The third goal of this article is to prove the missing case of $p$-torsion sheaves (at least for local systems), following a strategy of Bhatt--Hansen \cite[\S3]{BhattHansen_Zar_constructible} and Mann \cite{mann2022padic} over $\Q_p$.
 Together with Huber's result, this combines to show:

\begin{Theorem}\label{t:proper-BC-intro}
	Let $f:X\to S$ be a proper morphism of finite type of analytic adic spaces over $\Z_p$. Let $\mathbb L$ be an \'etale-locally constant torsion abelian sheaf on $X_\et$. Let $g:S'\to S$ be any morphism of adic spaces and form the base-change diagram in locally spatial diamonds:
		\[
	\begin{tikzcd}
		X' \arrow[d, "g'"] \arrow[r, "f'"] & S' \arrow[d, "g"] \\
		X \arrow[r, "f"]                           & S.              
	\end{tikzcd}\]
	Then the base-change map
	\[
	g^{\ast}Rf_{\et\ast}\mathbb L\to Rf'_{\et\ast}g'^{\ast}\mathbb L\]
	is an isomorphism.
\end{Theorem}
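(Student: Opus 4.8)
The plan is to deduce \Cref{t:proper-BC-intro} from \Cref{intro:PCT-perfectoid} by running the argument with which Bhatt--Hansen \cite{BhattHansen_Zar_constructible} and Mann \cite{mann2022padic} reduce $p$-torsion proper base change over $\Q_p$ to the primitive comparison theorem: the point is that \Cref{intro:PCT-perfectoid} now supplies that input uniformly over $\Z_p$, even though the geometric fibres of $f$ may lie over fields of characteristic $0$ or $p$. First I would make the standard reductions. The base-change map respects the decomposition of a torsion sheaf into its $\ell$-primary parts, and $Rf_{\et\ast}$ commutes with the resulting direct sum because $f$ is qcqs, so we may treat each part separately; for $\ell$ prime to the residue characteristics occurring on $S$ the claim is Huber's theorem \cite[Theorem~4.4.1]{huber2013etale}, so we may assume $\mathbb L$ is $p$-power torsion. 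Filtering $\mathbb L$ by the subsheaves $p^k\mathbb L$ and using two-out-of-three for the base-change isomorphism in distinguished triangles, we reduce to $\mathbb L$ an $\F_p$-local system. Passing to the associated locally spatial diamonds as in the statement, we may assume every space in sight is analytic over $\Z_p$ with a topologically nilpotent unit $\varpi$, so that \Cref{intro:PCT-perfectoid} applies to $f$ and to each of its base changes; and, the claim being local on $S$, we may take $S$ affinoid and $X$ qcqs. Finally, since $R\Gamma_\et(-,\mathbb L)\cong R\Gamma_v(-,\mathbb L)$ compatibly on locally spatial diamonds for an $\F_p$-local system $\mathbb L$ --- because $R\nu_\ast\mathbb L=\mathbb L$ for the projection $\nu$ from the v-site to the \'etale site, as one checks on strictly local points where $\F_p$-cohomology is concentrated in degree zero --- it is enough to prove that for every $n$ the base-change map $\beta\colon g^{\ast}R^nf_{v\ast}\mathbb L\to R^nf'_{v\ast}g'^{\ast}\mathbb L$ of sheaves on $S'_v$ is an isomorphism.

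I would then invoke \Cref{intro:PCT-perfectoid} over $S$ and, after applying $g^{\ast}$, over $S'$. The almost isomorphisms $R^nf_{v\ast}(\mathbb L\otimes\O^+_X/\varpi)\aeq R^nf_{v\ast}\mathbb L\otimes_{\F_p}\O^+_S/\varpi$ are natural in the morphism, so they identify $\beta\otimes_{\F_p}\O^+_{S'}/\varpi$, up to almost isomorphism, with the base-change map
\[g^{\ast}R^nf_{v\ast}(\mathbb L\otimes_{\F_p}\O^+_X/\varpi)\longrightarrow R^nf'_{v\ast}(g'^{\ast}\mathbb L\otimes_{\F_p}\O^+_{X'}/\varpi)\]
for the coefficient sheaf $\mathbb L\otimes_{\F_p}\O^+_X/\varpi$, which v-locally on $X$ is just $\O^+_X/\varpi$. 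For coefficients of that shape, proper pushforward on the v-site commutes with arbitrary base change; this is the perfectoid input --- it follows from Scholze's almost acyclicity of $\O^+$ on affinoid perfectoids together with v-descent for $\O^+/\varpi$-modules and properness \cite{ScholzeBerkeleyLectureNotes}, equivalently it is part of the almost-coherent six-functor formalism of \cite{mann2022padic}, and it is insensitive to the residue characteristic, hence available in the present generality. Consequently the displayed map, and therefore $\beta\otimes_{\F_p}\O^+_{S'}/\varpi$, is an almost isomorphism.

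It remains to upgrade this to the statement that $\beta$ itself is an isomorphism, which I would do by checking on stalks at geometric points $\bar s=\Spa(C,C^+)$ of $S'_v$. There $\beta_{\bar s}$ is a map of $\F_p$-vector spaces, and $\beta_{\bar s}\otimes_{\F_p}\O_C/\varpi$, being the stalk of an almost isomorphism, is an almost isomorphism of $\O_C/\varpi$-modules. By flatness and right exactness of $-\otimes_{\F_p}\O_C/\varpi$, its kernel and cokernel are $U\otimes_{\F_p}\O_C/\varpi$ with $U$ the kernel, respectively cokernel, of $\beta_{\bar s}$; since such a module is a free $\O_C/\varpi$-module, it is almost zero only when $U=0$. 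Hence $\beta_{\bar s}$, and therefore $\beta$, is an isomorphism, which proves \Cref{t:proper-BC-intro}.

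The main obstacle here is not conceptual: once \Cref{intro:PCT-perfectoid} (equivalently \Cref{t:PCT-char-p-intro}) is available --- which is where the real work of the paper lies --- the above is bookkeeping. What takes care is the perfectoid content: assembling the base-change statement for $\O^+/\varpi$-coefficients in the generality needed here, which includes ``pseudo-rigid'' bases whose geometric fibres straddle characteristics $0$ and $p$, together with the verification that the comparisons between the topologies $\et\subseteq\proet\subseteq v$ and between the algebraic and the complete geometric points behave well for the non-finite coefficient sheaf $\O^+/\varpi$.
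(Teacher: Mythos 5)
Your overall architecture is the right one (decompose into $\ell$-primary parts, reduce to $\F_p$-local systems, feed the Primitive Comparison Theorem into a base-change statement for $\O^+/\varpi$-coefficients, then upgrade from almost-isomorphism to isomorphism using freeness of $\F_p$-modules), and your final untensoring step is exactly what the paper does. But there is a genuine gap in the middle, and it is not ``bookkeeping'': you invoke, as a black box, that proper v-pushforward of $\O^+/\varpi$-coefficients commutes with \emph{arbitrary} base change over general analytic bases, citing Mann's six-functor formalism and the Berkeley notes. That is precisely the kind of statement the paper is careful \emph{not} to rely on. \Cref{t:PCT-rigid}'s accompanying remark (\Cref{t:PCT-perfectoid} and the remark following it) explicitly flags that it is unclear whether Mann's hypothesis (that $f^!$ preserves all small colimits) holds over $\F_p(\!(t)\!)$, which is the very case the paper is after; and the Berkeley notes contain no such relative base-change theorem for $\O^+/\varpi$. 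So as written your argument is either circular or appeals to a result that is not known in the required generality.

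The paper's fix is an order-of-operations change. Rather than prove the base-change map is an isomorphism of sheaves on $S'_v$ and only then pass to stalks, the paper first reduces to geometric points via \Cref{l:check-isom-on-stalks} and \Cref{l:stalk-of-et-sheaf-of-v-cohom}, so that $S$ and $S'$ are both of the form $\Spa(C,C^+)$, $\Spa(L,L^+)$ with $(L,L^+)$ an algebraically closed extension. Only then is the Primitive Comparison applied, and the $\O^+/\varpi$-base-change input one needs is the much more modest \Cref{l:pct-stalks-O^+/p}: a K\"unneth-type product formula
\[H^n_\et(X\times Y,\mathbb L\otimes\O^+/\varpi)\aeq H^n_\et(X,\mathbb L\otimes\O^+/\varpi)\otimes\O^+/\varpi(Y)\]
for $Y$ affinoid perfectoid, proved directly by a \v{C}ech argument over an affinoid perfectoid pro-\'etale cover of $X$ using almost acyclicity of $\O^+$. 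That lemma really is characteristic-independent and self-contained. If you reorganize your proof to reduce to geometric points before invoking any $\O^+/\varpi$-base change, and replace the appeal to Mann/Berkeley with \Cref{l:pct-stalks-O^+/p}, the rest of your argument goes through.
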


As an application, this implies a K\"unneth formula for rigid spaces (\Cref{c:Kunneth}), extending the case of prime-to-$p$-torsion sheaves due to Berkovich \cite[Corollary~7.7.3]{Berkovich_EtaleCohom}.

Our concrete motivation for \Cref{intro:PCT-perfectoid} and \Cref{t:proper-BC-intro} are applications to the study of moduli spaces in $p$-adic non-abelian Hodge theory in \cite{HX}: Roughly speaking, by the idea of ``abelianisation'', it is possible in this context to understand Hodge theory of \textit{non-abelian} groups in terms of results in the \textit{relative} Hodge theory of abelian groups, such as  \Cref{intro:PCT-perfectoid}. 

\subsection*{Acknowledgments}
We thank Lucas Mann for answering all our questions about his base-change for $\O^+/p$-modules in \cite{mann2022padic}. Sections \S4 and \S5 of this article are motivated by applications to \cite{HX}, and we thank Daxin Xu for related discussions. We thank Yicheng Zhou for pointing out a mistake in an earlier version. We thank the referee for many helpful comments. Furthermore, we wish to thank Alex Ivanov, Peter Scholze and Mingjia Zhang for  helpful conversations.  		 
This project was funded by the Deutsche Forschungsgemeinschaft (DFG, German Research Foundation) -- Project-ID 444845124 -- TRR 326.
\section{Setup, Notation, Recollections}
\subsection{Rigid spaces}
Throughout, we work with adic spaces in the sense of Huber \cite{Huber-generalization}. By a non-archimedean field, we mean a pair $(K,K^+)$ consisting of a field $K$ equipped with an equivalence class of non-trivial valuations on $K$ with respect to which $K$ is complete, together with an open and integrally closed subring $K^+\subseteq K$. We can always take $K^+$ to be the ring of power-bounded elements $\O_K=K^\circ$, but we will also need to consider general $K^+$. We often omit $K^+$ from notation when it is clear from context.

By a rigid space over $K$ we mean by definition an adic space $X\to \Spa(K,K^+)$ that is locally of topologically finite type over $(K,K^+)$. Accordingly, we say that a morphism of rigid spaces is proper when it is proper in the sense of Huber, \cite[Definition 1.3.2]{huber2013etale}. When $K^+=K^\circ$, then a rigid space $X$ over $(K,K^\circ)$ is proper in this sense if and only if it is the adification of a classical rigid space  in the sense of Tate that is proper in the sense of Kiehl.

More generally, we will mainly work with analytic adic spaces $X$ over $\Z_p$. One criterion that guarantees the sheafiness in this setting is the notion of sousperfectoid spaces of \cite{HK}.

\subsection{Diamonds}
To any analytic adic space over $\Z_p$, Scholze associates in \cite[\S15]{Sch18} a locally spatial diamond $X^\diamondsuit$, which is a v-sheaf on the site of perfectoid spaces over $\F_p$. It is given by sending $Y$ to the set of isomorphism classes of tuples $((Y^\sharp,i),h:Y^\sharp\to S)$ where $Y^\sharp$ is an untilt of $Y$, $i:Y^{\sharp\flat}\isomarrow Y$ is a fixed isomorphism, and $h$ is a morphism of adic spaces. 

We often drop the $-^\diamondsuit$ from notation and denote by $X_\qproet$ and $X_v$ the quasi-pro-\'etale, respectively the v-site of $X^\diamondsuit$ in the sense of \cite[\S14]{Scholze_p-adicHodgeForRigid}. We also have the \'etale site of $X^\diamondsuit$, which is canonically isomorphic to $X_\et$ by \cite[Lemma~15.6]{Sch18}. All of these sites are ringed sites equipped with natural structure sheaves $\O_{X_\et}$, $\O_{X_\qproet}$ and $\O_{X_v}$. For example, the latter is given by sending $((Y^\sharp,i),h)$ to $\O(Y^\sharp)$. We often drop the index from notation.

\subsection{Almost mathematics}\label{s:almost-mathematics} We use almost mathematics in the sense of Gabber--Ramero \cite{GabberRamero}, in the following sheaf version from \cite[Corollary~5.12]{Scholze_p-adicHodgeForRigid}: If $Y$ is an analytic adic space, an $\O^+$-module $N$ on $Y_\et$ is defined to be almost zero if it is annihilated by the subsheaf of ideals $I\subseteq \O^+$ given on $U\in Y_\et$ by those elements of $\O^+(U)$ which have valuation $<1$ at every point of $U$. As usual, we then say that a morphism of $\O^+$-modules is an almost isomorphism if kernel and cokernel are almost zero.  When $Y$ lives over a perfectoid field $K$, this agrees with the usual definition in terms of the ideal of topologically nilpotent elements of $K^+$.  
For any diamond $Y$, we make the analogous definitions on the v-site $Y_v$.

\subsection{Cohomological comparisons}
We will frequently use the following result of Scholze:
\begin{Proposition}[{\cite[Proposition~14.7, Proposition~14.8]{Sch18}}]\label{p:14.7-14.8}
	Let $X$ be any locally spatial diamond. Let $F$ be a sheaf of abelian groups on $X_\et$ and denote by $\nu:X_\qproet\to X_\et$ and $\mu:X_v\to X_\qproet$ the natural morphisms of sites. Then for any $n\in \N$, the natural maps
	\[ H^n_\et(X,F)\isomarrow H^n_\qproet(X,\nu^\ast F)\isomarrow H^n_v(X,\mu^\ast\nu^\ast F)\]
	are isomorphisms.
	Moreover, $\nu^\ast$ and $\mu^\ast$ are fully faithful.
\end{Proposition}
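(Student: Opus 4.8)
The strategy is to reduce both comparison isomorphisms and the full faithfulness to the single sheaf-theoretic claim that the adjunction unit maps
\[F\longrightarrow R\nu_\ast\nu^\ast F\quad\text{in }D^+(X_\et),\qquad G\longrightarrow R\mu_\ast\mu^\ast G\quad\text{in }D^+(X_\qproet)\]
are isomorphisms (for $F$, $G$ abelian sheaves, placed in cohomological degree $0$). Granting this, the Leray spectral sequence $H^p_\et(X,R^q\nu_\ast\nu^\ast F)\Rightarrow H^{p+q}_\qproet(X,\nu^\ast F)$ degenerates and yields the natural isomorphism $H^n_\et(X,F)\isomarrow H^n_\qproet(X,\nu^\ast F)$; the analogous spectral sequence for $\mu$ gives $H^n_\qproet(X,\nu^\ast F)\isomarrow H^n_v(X,\mu^\ast\nu^\ast F)$, and the composite is the map in the statement. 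Full faithfulness of $\nu^\ast$ and $\mu^\ast$ on abelian sheaves is then formal: from the degree-$0$ part $\nu_\ast\nu^\ast F=F$ and the adjunction we obtain $\Hom_{X_\qproet}(\nu^\ast F,\nu^\ast G)=\Hom_{X_\et}(F,\nu_\ast\nu^\ast G)=\Hom_{X_\et}(F,G)$, and likewise for $\mu$.

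It thus remains to prove $R\nu_\ast\nu^\ast F\isomarrow F$ and $R\mu_\ast\mu^\ast G\isomarrow G$. Consider $\nu$ first. The degree-$0$ statement $\nu_\ast\nu^\ast F=F$ says precisely that $\nu^\ast$ preserves sections over objects $U\in X_\et$; this one checks directly, by refining an arbitrary quasi-pro-\'etale cover of a qcqs $U$ by the composition of a cofiltered tower of finite \'etale covers with a single \'etale cover, and combining \'etale descent for $F$ with continuity along the tower. For the vanishing $R^n\nu_\ast\nu^\ast F=0$ in degrees $n>0$: this is local on $X_\et$, and $\nu^\ast$, $R\nu_\ast$ commute with restriction to open subdiamonds, so we may assume $X$ is a spatial diamond and compute stalks at geometric points $\overline{x}\to X$. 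Writing the strictly local diamond $X_{\overline{x}}$ as a cofiltered limit $\lim_i U_i$ of qcqs \'etale neighbourhoods of $\overline{x}$, continuity of quasi-pro-\'etale cohomology along cofiltered limits of qcqs diamonds identifies the stalk of $R^n\nu_\ast\nu^\ast F$ with $H^n_\qproet(X_{\overline{x}},\nu^\ast F)$. Since $X_{\overline{x}}$ is a strictly totally disconnected perfectoid space, over which every quasi-pro-\'etale cover splits, its quasi-pro-\'etale cohomology vanishes in positive degrees, and we are done.

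The argument for $\mu:X_v\to X_\qproet$ has exactly the same shape — reduce to $X$ spatial, pass to stalks, and arrive at the requirement that $H^n_v(Z,\mu^\ast\nu^\ast F)=0$ for $n>0$ whenever $Z$ is a strictly totally disconnected perfectoid space. This is the one genuinely non-formal ingredient, and the main obstacle: unlike an \'etale or quasi-pro-\'etale cover, a v-cover of $Z$ need not split, so $Z$ is not a point of the v-topos and the easy argument breaks down. Instead one exploits the structure of strictly totally disconnected spaces — $\pi_0(Z)$ is an extremally disconnected profinite set and $Z$ is, fibrewise over $\pi_0(Z)$, a profinite family of geometric points — to refine an arbitrary v-cover of $Z$ to one along which v-descent can be analysed explicitly, reducing the vanishing to a \v{C}ech computation that exhibits $Z$ as v-cohomologically trivial. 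Everything else is formal: adjunctions, the (degenerate) Leray spectral sequences, compatibility of $\nu^\ast$ and $\mu^\ast$ with restriction to opens, and the standard continuity of cohomology along cofiltered inverse limits of qcqs diamonds.
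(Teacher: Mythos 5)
This proposition is not proved in the paper: it is stated as a direct citation of Scholze, \cite[Propositions~14.7, 14.8]{Sch18}, and the author relies on it as a black box. Your proposal is therefore a reconstruction of Scholze's argument rather than something to be measured against a proof in this paper. With that understood, your high-level outline does track Scholze's strategy accurately: reduce everything to the claim that the unit maps $F\to R\nu_\ast\nu^\ast F$ and $G\to R\mu_\ast\mu^\ast G$ are isomorphisms, obtain the cohomology comparison from Leray and full faithfulness from adjunction, localise to stalks at geometric points via the analogue of \cite[Proposition~14.3]{Sch18}, and identify the stalk with cohomology of the strictly local diamond $X_{\bar x}$, which is a strictly totally disconnected perfectoid space. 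For $\nu$ the conclusion then follows because every quasi-pro-\'etale cover of such a space splits (a cofiltered system of nonempty compact sets of sections has a compatible choice), so positive-degree quasi-pro-\'etale cohomology of $X_{\bar x}$ vanishes. This part is correct and essentially Scholze's argument.

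The gap is exactly where you flag it. The vanishing $H^n_v(Z,\mu^\ast\nu^\ast F)=0$ for $n>0$ with $Z$ strictly totally disconnected is the entire technical content of \cite[Proposition~14.8]{Sch18}, and your account — ``exploit the structure of strictly totally disconnected spaces \dots\ to refine an arbitrary v-cover of $Z$ to one along which v-descent can be analysed explicitly'' — names the target without delivering a proof. Since a v-cover of $Z$ does not split in general, $Z$ is not a point of the v-topos, and one must genuinely compute: refine a v-cover to one by a strictly totally disconnected $\tilde Z$, control the \v Cech nerve $\tilde Z^\bullet$ and its topological realisation over $|Z|$, and show that the resulting \v Cech complex for the \'etale sheaf $F$ (which is a sheaf on the spectral space $|Z|$) is a resolution. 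None of this is formal, and it is precisely the work Scholze does in that section. So the proposal is structurally sound, correctly isolates the one non-formal step, and handles the $\nu$-direction; but the $\mu$-direction is left at the level of a heuristic, which is where the real proof lies.
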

For this reason, it is harmless to drop the pullbacks to simplify notation, and we will freely regard any sheaf $F$ on $X_\et$ as a v-sheaf without indicating this in the notation.

 We are particularly interested in the case that $X$ is an adic space over $\Spa(\F_p)$. In this case, the diamond $X^\diamondsuit$ is represented by a perfectoid space, namely the perfection $X^\perf$ of $X$ in the sense of \cite[Definition III.2.18]{torsion}. In particular, we need to be more careful than with \'etale sheaves when we are dealing with the natural structure sheaves: For example, the natural morphism $\O_{X_\et}\to \nu_{\ast}\O_{X_\qproet}$ is an isomorphism when $X$ is perfectoid, but not otherwise if $X$ lives over $\F_p$. On the other hand, we do have the following result for the integral subsheaves $\O^+\subseteq \O$, which we will use freely throughout the following (we note, however, that we will only need the ``almost'' version of this result, which is easier to see):
 \begin{Proposition}[{\cite[Corollary~2.15]{heuer-G-torsors-perfectoid-spaces}}]\label{p:c2.15}
 Let $X$ be any sousperfectoid adic space over $\Z_p$ that has a topologically nilpotent unit $\varpi\in \O(X)^\times$.  Then for any $n\in \N$, the natural maps 
 \[H^n_\et(X,\O^+/\varpi)\isomarrow H^n_\qproet(X,\O^+/\varpi)\isomarrow H^n_v(X,\O^+/\varpi).\]
 are isomorphisms (not just almost isomorphisms). This also holds when $X$ is a rigid space.
\end{Proposition}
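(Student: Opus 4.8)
Since the statement is local on $X_\et$, we may assume $X$ affinoid. Both comparison maps are induced by the morphisms of ringed sites $\nu\colon X_\qproet\to X_\et$ and $\mu\colon X_v\to X_\qproet$, i.e.\ ultimately by the natural maps of structure sheaves $\nu^\ast\O^+_{X_\et}\to\O^+_{X_\qproet}$ and $\mu^\ast\O^+_{X_\qproet}\to\O^+_{X_v}$. The plan is to show that these become isomorphisms after reduction modulo $\varpi$:
\[\nu^\ast(\O^+_{X_\et}/\varpi)\isomarrow\O^+_{X_\qproet}/\varpi\qquad\text{and}\qquad\mu^\ast(\O^+_{X_\qproet}/\varpi)\isomarrow\O^+_{X_v}/\varpi.\]
Granting this, we apply \Cref{p:14.7-14.8} --- valid since the diamond attached to $X$ is locally spatial, and applicable to the \emph{arbitrary} abelian sheaf $F:=\O^+_{X_\et}/\varpi$ on $X_\et$ --- to obtain $H^n_\et(X,F)\isomarrow H^n_\qproet(X,\nu^\ast F)\isomarrow H^n_v(X,\mu^\ast\nu^\ast F)$. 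By the two displayed identities the middle and right groups are $H^n_\qproet(X,\O^+/\varpi)$ and $H^n_v(X,\O^+/\varpi)$, and tracing through the construction one checks that these isomorphisms are the natural maps of the statement. Since \Cref{p:14.7-14.8} is an honest, not merely almost, isomorphism, this simultaneously gives the ``not just almost'' refinement.

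So everything reduces to the two displayed sheaf isomorphisms; by symmetry we discuss only $\nu$, and by sheafification the claim is local, so one reduces to checking that over affinoid perfectoid objects quasi-pro-\'etale over $X$ (which form a basis of $X_\qproet$) reduction modulo $\varpi$ kills the discrepancy between $\nu^\ast\O^+_{X_\et}$ and $\O^+_{X_\qproet}$. The \emph{almost}-isomorphism version is essentially Scholze's acyclicity \cite{Sch18}: $\O^+/\varpi$ is almost acyclic on such objects, and on the \'etale side one feeds in the perfectoid covers supplied by the sousperfectoid (resp.\ rigid) structure of $X$ \cite{HK}. For the honest statement, the key observation is that the map $\nu^\ast\O^+_{X_\et}\to\O^+_{X_\qproet}$, while far from bijective integrally, fails to be bijective only by (i) a $\varpi$-adic completion and (ii), when $X$ lives over $\F_p$, the passage to the perfection, i.e.\ adjoining $p$-power roots --- and \emph{both become invisible after reduction modulo $\varpi$}. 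For (i): the colimits of $\O^+$'s in play are $\varpi$-torsion-free, since $\O^+\subseteq\O$ and $\varpi\in\O^\times$, so $\varpi$-adic completion induces an isomorphism modulo $\varpi$. For (ii): since $\varpi$ is a \emph{unit}, adjoining a $p$-th root of $f\in\O^+(V)$ up to $\varpi$-error is an \'etale operation in characteristic $p$ --- pass to the finite \'etale $\O(V)$-algebra $\O(V)[Y]/(Y^p-\varpi^pY-f)$, whose discriminant is $\pm\varpi^{p^2}$ and in which $Y^p\equiv f\pmod{\varpi}$; iterating, all $p$-power roots are realised \'etale-locally modulo $\varpi$, so that sheafifying over $X_\qproet$ matches $\nu^\ast(\O^+_{X_\et}/\varpi)$ with $\O^+_{X_\qproet}/\varpi$. (When the residue characteristic is $0$ no $p$-power roots need be added, being trivialised by the $\mu_{p^\infty}$-action on $\O^+_{X_\qproet}$; the general analytic $\Z_p$-case is then assembled from these two along the decomposition of $X$ by residue characteristic.) The argument for $\mu$ is identical.

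The main obstacle is step (ii): for genuinely sousperfectoid (non-perfectoid) $X$ the quasi-pro-\'etale site contains perfectoid objects ramified over $X$, so the discrepancy between $\nu^\ast\O^+_{X_\et}$ and $\O^+_{X_\qproet}$ really involves the perfection, not just a completion, and one must argue carefully that this disappears modulo $\varpi$ after sheafification --- which is precisely where the \'etale approximation of $p$-power roots, together with the hypothesis that $\varpi$ be a \emph{unit} (not merely topologically nilpotent), is essential. For $X$ perfectoid the argument collapses entirely: only $\varpi$-adic-completion-insensitivity is needed, after which \Cref{p:14.7-14.8} applies verbatim to $\O^+_{X_\et}/\varpi$. (A subordinate point, requiring no argument: $\O^+_{X_\et}/\varpi$ is an honest abelian sheaf on $X_\et$, so \Cref{p:14.7-14.8} genuinely applies.)
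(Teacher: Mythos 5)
Your overall strategy is genuinely different from the paper's. The paper proves nothing at the sheaf level: it cites \cite[Corollary 2.15]{heuer-G-torsors-perfectoid-spaces} for the \'etale-to-v comparison, and then treats the middle (quasi-pro-\'etale) term by a squeezing argument --- for $n=0$ the inclusions $H^0_\et\subseteq H^0_\qproet\subseteq H^0_v$ compose to an equality so are equalities, and for $n\geq 1$ the presheaf $U\mapsto H^n_v(U,\O^+/\varpi)$ vanishes after \'etale sheafification and hence after quasi-pro-\'etale sheafification. Your plan instead is to upgrade the statement to a sheaf-level isomorphism $\nu^\ast(\O^+_{X_\et}/\varpi)\cong\O^+_{X_\qproet}/\varpi$ (and similarly for $\mu$) and then read the cohomological statement off \Cref{p:14.7-14.8}. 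If it worked, that would be a strictly stronger result, but as written the central claims are asserted rather than proved, and several of them need nontrivial input.

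Concretely, there are three gaps. First, your key sheaf identity is only ``checked'' informally on a basis: the Artin--Schreier trick (passing to the finite \'etale cover $Y^p-\varpi^pY-f$, unit discriminant $\pm\varpi^{p^2}$) indeed produces $p$-th roots modulo $\varpi$, but turning this into the statement that the \emph{sheaf} $\nu^\ast(\O^+_{X_\et}/\varpi)$ maps isomorphically onto $\O^+_{X_\qproet}/\varpi$ requires controlling both sides as sheafifications of explicit presheaves on a basis and identifying the map; the paper deliberately sidesteps this by working with cohomology groups and an external reference. Second, your handling of mixed residue characteristic --- ``the general analytic $\Z_p$-case is then assembled from these two along the decomposition of $X$ by residue characteristic'' --- is not a valid reduction: for analytic adic spaces over $\Z_p$ such as $\Spa(\Z_p\llbracket T\rrbracket\langle\frac{T}{p}\rangle[\frac{1}{T}])$ the characteristic-$0$ locus is open but the characteristic-$p$ locus is only closed, so there is no decomposition into pieces on which a single-characteristic argument applies, and the characteristic-$0$ case (where $X^\diamondsuit$ is a genuine tilt, not a perfection) is not addressed at all by the Artin--Schreier mechanism nor by the ``trivialised by the $\mu_{p^\infty}$-action'' remark. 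Third, ``the argument for $\mu$ is identical'' is not substantiated: between $X_\qproet$ and $X_v$ there is no Frobenius/perfection discrepancy to kill --- both sites live on perfectoid test objects --- and what one needs there is a separate v-descent statement for $\O^+/\varpi$ (that the quasi-pro-\'etale and v-sheafifications of the structure presheaf agree modulo $\varpi$), which is precisely the content of the cited Corollary 2.15. In short, your proposal reproves, incompletely, the very result the paper invokes as a black box, and the paper's squeezing argument for the middle term is both simpler and strictly weaker in exactly the way the statement demands.
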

\begin{proof}
	The comparison of \'etale and v-cohomology is \cite[Corollary~2.15]{heuer-G-torsors-perfectoid-spaces}. The comparison to quasi-pro-\'etale cohomology follows from this: We clearly have \[H^0_\et(X,\O^+/\varpi)\subseteq H^0_\qproet(X,\O^+/\varpi)\subseteq H^0_v(X,\O^+/\varpi)\] and the composition is an equality, so the inclusions are equalities. For $n\geq 1$, the presheaf $U\mapsto H^n_v(U,\O^+/\varpi)$ on $\Perf_K$ vanishes after \'etale sheafification, hence also after quasi-pro-\'etale sheafification. This implies the case of $n\geq 1$.
\end{proof}
\section{A Primitive Comparison Theorem in characteristic $p$}
The aim of this section is to prove the absolute version of the Primitive Comparison Theorem in characteristic $p>0$: Let $(K,K^+)$ be an algebraically closed non-archimedean field of characteristic $p$ and let $\varpi$ be a pseudo-uniformiser. Let $X$ be a proper rigid space over $(K,K^+)$. There is a natural homomorphism of $K^+/\varpi$-modules
\[ H^n_\et(X,\F_p)\otimes K^+/\varpi\to H^n_\et(X,\O^+/\varpi),\]
exactly like in the Primitive Comparison Theorem in characteristic $0$. The goal of this section is to prove that, to our surprise, this turns out to be an almost isomorphism.

\begin{Definition}\label{d:phi-np}
	Let $R$ be an $\F_p$-algebra and denote by $F$ the Frobenius. Let $M$ be any $R$-module equipped with an $F$-semilinear map $\phi:M\to M$. We denote the $\phi$-nilpotent subspace of $M$ by
	\[M^{\phi\np}:=\{m\in M\mid \phi^d(m)=0 \text{ for }d\gg 0\}.\]
	By semi-linearity, this is clearly an $R$-submodule of $M$.
	The goal of this section is to show:
\end{Definition}
\begin{Theorem}\label{t:PC-in-char-p}
	Let $X$ be any  proper rigid space over $(K,K^+)$. Let $\mathbb L$ be an $\F_p$-local system on $X$. Then for any $n\geq 0$:
	\begin{enumerate}
		\item The following natural map is an almost isomorphism compatible with Frobenius:
		\[H^n_{\et}(X,\mathbb L)\otimes_{\F_p}K^+/\varpi\aeq H^n_{\et}(X,\mathbb L\otimes \O^+/\varpi).\]
		\item The following natural map is an almost isomorphism compatible with Frobenius:
		\[ H^n_{\et}(X,\mathbb L)\otimes_{\F_p}K^+\aeq H^n_{\et}(X^{\perf},\mathbb L\otimes \O^+).\]
		\item Let $\phi=\id\otimes F$ on $\mathbb L\otimes \O$. Then there is a short exact sequence of $K$-vector spaces
		\[0\to H^n(X,\mathbb L\otimes \O)^{\phi\np}\to H^n(X,\mathbb L\otimes \O)\rightarrow H^n(X^{\perf},\mathbb L\otimes \O)\to 0.\]
	\end{enumerate}
\end{Theorem}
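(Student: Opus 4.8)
The plan is to deduce all three parts from two ingredients: the Artin--Schreier sequence in characteristic $p$, and a finiteness statement for the \'etale cohomology of $\O^+/\varpi$ on proper rigid (and perfectoid) spaces, playing the role of Scholze's ``key lemma'' in the original Primitive Comparison Theorem. First I would set up the Artin--Schreier sequences. The universal homeomorphism $X^\perf\to X$ induces an equivalence $X^\perf_\et\cong X_\et$, so that $H^n_\et(X,\mathbb L)=H^n_\et(X^\perf,\mathbb L)$, and since $\O^+_{X^\perf}$ is a sheaf of perfect $\F_p$-algebras, $\phi=\id\otimes F$ acts \emph{bijectively} on $\O^+_{X^\perf}$ and hence on its cohomology. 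As $T^p-T=a$ defines a finite \'etale cover for every section $a$ of $\O^+$, the sequences
\[0\to\mathbb L\to\mathbb L\otimes_{\F_p}\O^+\xrightarrow{\phi-1}\mathbb L\otimes_{\F_p}\O^+\to 0\quad\text{and}\quad 0\to\mathbb L\to\mathbb L\otimes_{\F_p}\O^+/\varpi\xrightarrow{\phi-1}\mathbb L\otimes_{\F_p}\O^+/\varpi\to 0\]
are exact on $X_\et$, and likewise on $X^\perf_\et$.

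The technical heart is the finiteness input: for a proper rigid space $Y$ over $(K,K^+)$ (or its perfection), the complex $R\Gamma_\et(Y,\mathbb L\otimes\O^+/\varpi^m)$ has almost finitely presented $K^+/\varpi^m$-modules as cohomology, concentrated in a bounded range of degrees; passing to the limit over $m$, $H^n_\et(Y,\mathbb L\otimes\O^+)$ is almost finitely presented over $K^+$. I would obtain this by adapting Scholze's argument --- using properness to reduce to a finite cover by affinoids, passing to affinoid perfectoid covers, invoking the almost-acyclicity of $\O^+/\varpi$ on affinoid perfectoids to compute $R\Gamma$ by an explicit \v{C}ech complex, and running the usual almost-finiteness bootstrap --- and \emph{this is the step I expect to be the main obstacle}. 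Granting it, part (2) follows quickly: $M_n:=H^n_\et(X^\perf,\mathbb L\otimes\O^+)$ is almost finitely presented over $K^+$ with $\phi$ acting bijectively, hence is an ``\'etale $\phi$-module''; over the algebraically closed field $K$ any such module is almost trivial, so $\phi-1$ is almost surjective on $M_n$, the space $M_n^{\phi=1}$ is almost finite free over $\F_p$, and $M_n^{\phi=1}\otimes_{\F_p}K^+\to M_n$ is an almost isomorphism. The Artin--Schreier long exact sequence on $X^\perf$ then gives $M_n^{\phi=1}\aeq H^n_\et(X^\perf,\mathbb L)=H^n_\et(X,\mathbb L)$, which proves (2). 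For part (1) I would run the analogous long exact sequence for $\O^+/\varpi$ on $X$ itself; now $\phi$ is no longer bijective, but combining the finiteness with the derived reduction modulo $\m$ to the (algebraically closed) residue field $K^+/\m$ and the Fitting decomposition of the semilinear operator $\phi$ shows that $H^n_\et(X,\mathbb L)\otimes_{\F_p}K^+/\varpi\to H^n_\et(X,\mathbb L\otimes\O^+/\varpi)$ is almost surjective, and an Euler-characteristic count against the almost-perfect complex $R\Gamma_\et(X,\mathbb L\otimes\O^+/\varpi)$ upgrades this to the asserted almost isomorphism. Frobenius-equivariance is built into every step.

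Finally, part (3) follows from (2) by inverting $\varpi$. Inverting $\varpi$ turns the almost isomorphism of (2) into an honest isomorphism $H^n_\et(X^\perf,\mathbb L\otimes\O)\cong H^n_\et(X,\mathbb L)\otimes_{\F_p}K$, and since $\mathbb L\otimes\O$ is a vector bundle, \'etale and analytic cohomology coincide on $X$ and on $X^\perf$. On the other hand $\O_{X^\perf}$ is the $\varpi$-adic completion of $\clim_F\O_X$, and pulling back coherent cohomology along $X^\perf\to X$ identifies $H^n(X^\perf,\mathbb L\otimes\O)$ with $\clim_\phi H^n(X,\mathbb L\otimes\O)$ --- there is no completion issue since each $H^n(X,\mathbb L\otimes\O)$ is a finite-dimensional $K$-vector space by Kiehl. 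For a Frobenius-semilinear endomorphism $\phi$ of a finite-dimensional $K$-vector space $V$, the Fitting lemma provides a canonical decomposition $V=V^{\phi\np}\oplus V'$ with $\phi$ bijective on $V'$, and the natural map $V\to\clim_\phi V$ is surjective with kernel $V^{\phi\np}$, identifying $\clim_\phi V$ with $V/V^{\phi\np}$; applied to $V=H^n(X,\mathbb L\otimes\O)$ this is precisely the short exact sequence of (3), and functoriality of all the steps yields the canonical decomposition stated after the theorem.
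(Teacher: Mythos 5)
Your overall strategy — reduce to the perfection, use the Artin--Schreier sequence, and drive everything from a finiteness statement for $\O^+$-cohomology obtained by a Kiehl/Mumford-type argument on the integral \v{C}ech complex — is exactly the one the paper follows, and your sketch of part~(2) is essentially correct (the paper makes this precise by splitting $\cH^n(\mathcal U,\mathbb L\otimes\O^+)$ into a bounded-torsion piece and an almost-finite-free piece, applying $\clim_\phi$ to kill the torsion, and then invoking \cite[Lemma~2.12]{Scholze_p-adicHodgeForRigid} together with the Lang/Katz theorem on $\phi$-modules over an algebraically closed field). You should, however, first reduce to $K^+=\O_K$ (and to $X$ reduced) — the paper does this via \Cref{l:reduce-PCT-(C-Cp)-to-(C-OC)}, and your later appeal to the residue field $K^+/\m$ does not make sense for a general $K^+$.

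Where I see genuine gaps is in parts (1) and (3). For (1), your plan to run the Artin--Schreier sequence for $\O^+/\varpi$ \emph{on $X$ itself} and then patch it up with a ``Fitting decomposition'' and an ``Euler-characteristic count against the almost-perfect complex'' is both vaguer and harder than necessary, and as written it is not a proof: you have not explained how the Fitting decomposition of a non-bijective semilinear $\phi$ on $H^n_\et(X,\mathbb L\otimes\O^+/\varpi)$ would force the natural map to be an almost isomorphism, nor why the ``Euler-characteristic count'' closes the argument. The efficient route, taken in the paper, is to note that the Frobenius $\O^+/\varpi\to\O^+/\varpi^p$ is already an isomorphism of \'etale sheaves on $X$, so $H^n_\et(X,\mathbb L\otimes\O^+/\varpi)=H^n_\et(X^\perf,\mathbb L\otimes\O^+/\varpi)$ (\Cref{p:c2.15}); then part~(1) follows from part~(2) and the long exact sequence of $0\to\O^+\xrightarrow{\cdot\varpi}\O^+\to\O^+/\varpi\to 0$ on $X^\perf$ by the five lemma. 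You seem not to have noticed that the left-hand side of part~(1) is really living on $X^\perf$, where $\phi$ is bijective, so the obstruction you were worrying about is not there.

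For (3), the assertion that ``pulling back coherent cohomology along $X^\perf\to X$ identifies $H^n(X^\perf,\mathbb L\otimes\O)$ with $\clim_\phi H^n(X,\mathbb L\otimes\O)$, and there is no completion issue since the cohomology is finite-dimensional'' is precisely the nontrivial content of \Cref{p:H^i(X^perf,O^+)}.2 (after inverting $\varpi$), not a formality. The \v{C}ech complex of $\O^+$ on $X^\perf$ is the $\varpi$-adic completion of $\clim_\phi\check C^\ast(\mathcal U,\O^+)$, and completion does not commute with cohomology in general; finite-dimensionality of the $H^n$'s over $K$ does not by itself let you pass the completion through the cohomology functor, because the terms of the complex are large Banach modules and the Mumford quasi-isomorphism to a finite complex need not be $\phi$-equivariant. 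The paper handles this by proving (via \Cref{l:ses-for-H^i(X,O+)}) that the integral \v{C}ech cohomology is an extension of a bounded-torsion module by an almost-finite-free one, so that after $\clim_\phi$ the torsion vanishes, the resulting groups are almost finite free, and only then does completion commute with cohomology. Your sketch skips exactly this, which is the technical heart you yourself flagged as the main obstacle — so part~(3) as written is not yet a proof, though the semilinear Fitting-lemma step you invoke at the end is correct and is the content of \Cref{l:colim_phi-M-almost-finite-free}.1.
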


The key idea for the proof is that the right place to look for a Primitive Comparison Theorem in characteristic $p$ is the perfection $X^\perf$. Indeed, part 2 can be interpreted as a ``Primitive Comparison Theorem for the perfection $X^\perf$''. The analogous statement of part 2 clearly becomes false if we replace $X^\perf$ by $X$: For example, for a supersingular elliptic curve, we have $H^1_\et(X,\F_p)=0$ but $\dim_K H^1_\et(X,\O)\neq 0$. In particular, the natural map
\[H^n_\et(X,\O^+)\to  H^n_\et(X^\perf,\O^+)\]
is in general not injective.
However, by \Cref{p:c2.15}, we do have an isomorphism
\[H^n_\et(X,\O^+/\varpi)= H^n_v(X,\O^+/\varpi)= H^n_\et(X^\perf,\O^+/\varpi).\]
Indeed, this can be seen directly by considering the Frobenius $F:\O^+/\varpi\to \O^+/\varpi^p$ on $X_\et$. It is clearly injective. It is surjective because for any local section $f$ of $\O^+$, the equation $T^p+\varpi T+f$ defines a finite \'etale cover of $X$ over which $f$ acquires a $p$-th root $T$ in $\O^+/\varpi$.

Finally, we interpret part 3 of \Cref{t:PC-in-char-p} as saying that the comparison of \'etale and coherent cohomology on $X^\perf$ still gives a useful statement for $X$ because the coherent cohomology of $X^\perf$ can be described explicitly in terms of that of $X$. More explicitly:
\begin{Corollary}
	We have
	\[H^n_\et(X,\F_p)\otimes_{\F_p}K=H^n_\et(X,\O)/H^n_\et(X,\O)^{\phi\np}.\] In particular, \[\dim_{\F_p} H^n_\et(X,\F_p)\leq \dim_KH^n_\et(X,\O)\] and equality holds if and only if $\phi$ is bijective on $H^n_\et(X,\O)$.
\end{Corollary}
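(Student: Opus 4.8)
The plan is to deduce this Corollary formally from parts (2) and (3) of \Cref{t:PC-in-char-p} applied to the trivial local system $\mathbb L=\F_p$. The only point requiring a little care is the passage from those two statements, which concern $\O^+$ and coherent cohomology separately, to the single clean identity over $K$.

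First I would take part (2) for $\mathbb L=\F_p$, giving an almost isomorphism $H^n_\et(X,\F_p)\otimes_{\F_p}K^+\aeq H^n_\et(X^\perf,\O^+)$ of $K^+$-modules, and then invert the pseudo-uniformiser $\varpi$. Since $\varpi\in\m_K$, every almost-zero $K^+$-module is $\varpi$-torsion, so an almost isomorphism of $K^+$-modules becomes a genuine isomorphism after applying the exact functor $-\otimes_{K^+}K$; on the left this produces $H^n_\et(X,\F_p)\otimes_{\F_p}K$. For the right-hand side I need that coherent cohomology of $X^\perf$ commutes with inverting $\varpi$: as $X$ is proper over $K$ it is qcqs, hence so is the perfectoid space $X^\perf$, so $H^n_\et(X^\perf,-)$ commutes with the filtered colimit $\O=\varinjlim(\O^+\xrightarrow{\cdot\varpi}\O^+\xrightarrow{\cdot\varpi}\cdots)$, whence $H^n_\et(X^\perf,\O^+)\otimes_{K^+}K=H^n_\et(X^\perf,\O)$. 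Thus the natural map induces an isomorphism $H^n_\et(X,\F_p)\otimes_{\F_p}K\isomarrow H^n_\et(X^\perf,\O)$. Combining this with part (3) for $\mathbb L=\F_p$, which identifies $H^n_\et(X^\perf,\O)$ with the cokernel of $H^n_\et(X,\O)^{\phi\np}\hookrightarrow H^n_\et(X,\O)$, yields the asserted equality. Because all maps in sight are the evident ones (the map $\F_p\to\O$ on $X$ and on $X^\perf$, and pullback along $X^\perf\to X$, are compatible), the composite agrees with the natural map of the statement; in particular it is injective and splits the surjection of part (3), recovering the functorial decomposition stated after \Cref{t:thm-2-intro}.

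For the last assertion, taking $K$-dimensions gives $\dim_{\F_p}H^n_\et(X,\F_p)=\dim_K H^n_\et(X,\O)-\dim_K H^n_\et(X,\O)^{\phi\np}\leq\dim_K H^n_\et(X,\O)$, with equality exactly when $H^n_\et(X,\O)^{\phi\np}=0$. Since $H^n_\et(X,\O)$ is finite-dimensional by Kiehl finiteness, the chain $\ker\phi\subseteq\ker\phi^2\subseteq\cdots$ stabilizes, so this vanishing is equivalent to $\phi$ being injective. Finally, as $K$ is algebraically closed, hence perfect, an $F$-semilinear endomorphism of a finite-dimensional $K$-vector space is injective iff bijective: writing $\phi$ in a basis as $v\mapsto Av^{(p)}$, where $v^{(p)}$ denotes the coordinatewise $p$-th power, and using that $x\mapsto x^p$ is a bijection of $K$, one gets $\im\phi=\im A$ and $\ker\phi=0\iff\ker A=0$, and for the linear endomorphism $A$ of a finite-dimensional space injectivity is equivalent to bijectivity. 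Hence equality holds iff $\phi$ is bijective on $H^n_\et(X,\O)$, as claimed.
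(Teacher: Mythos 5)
Your derivation is correct, and since the paper leaves this corollary without a separate proof (it is meant as an immediate consequence of \Cref{t:PC-in-char-p}(2)–(3)), your argument is precisely the intended one: invert $\varpi$ in part (2) with $\mathbb L=\F_p$, use that almost-zero $K^+$-modules die after $-\otimes_{K^+}K$ and that $H^n_\et(X^\perf,-)$ commutes with the filtered colimit $\O=\varinjlim_{\cdot\varpi}\O^+$ on the qcqs space $X^\perf$, and then plug into the short exact sequence of part (3). The final observation — that for an $F$-semilinear endomorphism of a finite-dimensional vector space over a perfect field, injectivity, bijectivity, and vanishing of the $\phi$-nilpotent part are all equivalent, so equality of dimensions holds iff $\phi$ is bijective — is also correct and is exactly the content needed, with the coordinate check $\phi(c)=Ac^{(p)}$ being a clean way to reduce to the linear map $A$.
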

We deduce the following finiteness result which is an analogue of \cite[Theorem~1.1]{Scholze_p-adicHodgeForRigid}:

\begin{Corollary}\label{c:H^iXF-char-p-finite}
	For any $n\in \N$, the group $H^n_\et(X,\mathbb L)$ is finite. It vanishes for $n>\dim X$.
\end{Corollary}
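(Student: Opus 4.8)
The plan is to deduce finiteness from part (1) of Theorem~\ref{t:PC-in-char-p} together with classical finiteness of coherent cohomology, exactly as Scholze does in characteristic~$0$. First I would recall that since $X$ is a proper rigid space over $(K,K^+)$ and $\mathbb L$ is an $\F_p$-local system, the sheaf $\mathbb L\otimes_{\F_p}\O^+/\varpi$ is, \'etale-locally on $X$, a finite free $\O^+/\varpi$-module; by properness and the finiteness of coherent cohomology of the proper rigid space $X$ (Kiehl's theorem, applied after trivialising $\mathbb L$ on a finite \'etale cover and using a Hochschild--Serre/\v Cech spectral sequence), the cohomology $H^n_\et(X,\mathbb L\otimes\O^+/\varpi)$ is "almost finitely generated" over $K^+/\varpi$ — more precisely, it is almost isomorphic to a finitely generated $K^+/\varpi$-module, and in particular it is killed by some power of the maximal ideal beyond which it is bounded. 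The precise statement I would cite or reprove is that $H^n_\et(X,\mathbb L\otimes\O^+/\varpi)$ is an almost finitely generated $K^+/\varpi$-module which is moreover almost of bounded $\varpi$-torsion; this is the analogue of \cite[Theorem~5.1]{Scholze_p-adicHodgeForRigid} and is where properness enters essentially.

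Granting that, part (1) of Theorem~\ref{t:PC-in-char-p} gives an almost isomorphism
\[
H^n_\et(X,\mathbb L)\otimes_{\F_p}K^+/\varpi\aeq H^n_\et(X,\mathbb L\otimes\O^+/\varpi),
\]
so the left-hand side is an almost finitely generated $K^+/\varpi$-module. Now $H^n_\et(X,\mathbb L)$ is an $\F_p$-vector space, and $H^n_\et(X,\mathbb L)\otimes_{\F_p}K^+/\varpi$ is a free $K^+/\varpi$-module of rank $\dim_{\F_p}H^n_\et(X,\mathbb L)$ (possibly infinite). A free $K^+/\varpi$-module that is almost finitely generated must be finitely generated, since $K^+/\varpi$ is a local ring whose maximal ideal is not nilpotent (here $K^+$ is a valuation ring with value group divisible, as $K$ is algebraically closed), so almost finite generation forces honest finite generation by reducing modulo $\m$: the module $H^n_\et(X,\mathbb L)\otimes_{\F_p}K^+/\m$ must already be finite-dimensional over the residue field. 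Hence $\dim_{\F_p}H^n_\et(X,\mathbb L)<\infty$, i.e. $H^n_\et(X,\mathbb L)$ is a finite group.

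For the vanishing in degrees $n>\dim X$, I would argue similarly: by part~(1), $H^n_\et(X,\mathbb L)\otimes_{\F_p}K^+/\varpi$ is almost isomorphic to $H^n_\et(X,\mathbb L\otimes\O^+/\varpi)$, and the latter vanishes almost for $n>\dim X$ because, after trivialising $\mathbb L$ \'etale-locally and passing to the associated spectral sequence, it is controlled by the coherent cohomology of a proper rigid space of dimension $\dim X$, which vanishes in degrees $>\dim X$ by Grothendieck's bound (Kiehl). Since $K^+/\varpi\to K^+/\m$ is surjective and $H^n_\et(X,\mathbb L)$ is an $\F_p$-space, $H^n_\et(X,\mathbb L)\otimes K^+/\varpi=0$ almost forces $H^n_\et(X,\mathbb L)=0$. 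The main obstacle is establishing the almost finite generation (and the degree bound) of $H^n_\et(X,\mathbb L\otimes\O^+/\varpi)$ over $K^+/\varpi$: this requires properness of $X$ and an argument, via a finite \'etale cover trivialising $\mathbb L$ and a descent spectral sequence, reducing to the known finiteness of $H^*_\et(X',\O^+/\varpi)$ for proper rigid $X'$ — which in turn follows from Kiehl's coherent finiteness theorem together with the almost comparison between $\O^+/\varpi$-cohomology and coherent cohomology. Everything after that is the soft linear-algebra observation that a free module over $K^+/\varpi$ which is almost finitely generated is finitely generated.
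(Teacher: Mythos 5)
Your route is correct in spirit but genuinely different from the paper's, and a bit longer. The paper deduces the result from parts (2) and (3) of Theorem~\ref{t:PC-in-char-p}: part~(3) exhibits $H^n(X^\perf,\mathbb L\otimes\O)$ as a quotient of $H^n(X,\mathbb L\otimes\O)$, and part~(2) (after inverting $\varpi$) identifies $H^n_\et(X,\mathbb L)\otimes_{\F_p}K$ with $H^n(X^\perf,\mathbb L\otimes\O)$; this gives $\dim_{\F_p}H^n_\et(X,\mathbb L)\leq\dim_K H^n(X,\mathbb L\otimes\O)$, and the right-hand side is finite by Kiehl and vanishes for $n>\dim X$ by de~Jong--van~der~Put. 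No almost mathematics over $K^+/\varpi$ is needed. You instead work through part~(1), which requires the extra input that $H^n_\et(X,\mathbb L\otimes\O^+/\varpi)$ is almost finitely generated over $K^+/\varpi$ (resp.\ almost zero for $n>\dim X$); you correctly flag this as the main obstacle. That input is indeed established en route to Theorem~\ref{t:PC-in-char-p} (via Lemma~\ref{l:ses-for-H^i(X,O+)} and Proposition~\ref{p:H^i(X^perf,O^+)}), but it is not a formal consequence of the statement of part~(1), so your proof is not self-contained given only the theorem.

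Two details in your write-up need tightening. First, the step ``a free $K^+/\varpi$-module that is almost finitely generated is finitely generated'' is right, but the justification by reducing modulo $\m$ doesn't quite work: an almost isomorphism need not stay an isomorphism after $\otimes_{K^+}k$, since the defect is killed by $\varpi^\epsilon\in\m$ but need not vanish mod $\m$. The clean argument is a support one: if $M=R^{(I)}$ with $R=K^+/\varpi$ and $M\approx_\epsilon N$ with $N$ finitely generated, then $\im(g\colon N\to M)$ is finitely generated, hence supported on a finite subset $S\subseteq I$; but $\im(g)\supseteq\varpi^\epsilon M$, and $\varpi^\epsilon e_i\neq 0$ for every $i\in I$ as soon as $\epsilon<1$, forcing $I=S$ finite. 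Second, for the vanishing in degrees $n>\dim X$, the vanishing of rational coherent cohomology does not immediately kill $H^n_\et(X,\mathbb L\otimes\O^+/\varpi)$: the \v Cech complex $\check C^\ast(\mathcal U,\mathbb L\otimes\O^+)$ can carry $\varpi$-power torsion in high degrees even when its rationalisation is exact. One really needs to pass to the perfection and take $\varinjlim_\phi$ to kill this torsion (as in Proposition~\ref{p:H^i(X^perf,O^+)}), then use Proposition~\ref{p:c2.15} to identify the result with the $\O^+/\varpi$-cohomology of $X$ itself. This is precisely what the paper's route via parts~(2) and~(3) bypasses.
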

We note that the vanishing is in degrees $> \dim X$, rather than for $> 2\dim X$ as is the case for $\F_l$-coefficients for $l\neq p$, or in the case of characteristic $0$.
\begin{proof}
By Kiehl's Proper Mapping Theorem, $H^n_\et(X,\O)$ is a finite dimensional $K$-vector space. By \cite[Proposition 2.5.8]{deJongvdPut}, it vanishes for $n>\dim X$.
\end{proof}

\begin{Corollary}
	Let $n\in \N$ and set $r(n):=\dim_{\F_p}H^n(X,\F_p)$.
	 Then 
	for $F$ any one of the sheaves  $\O$, $\O^{+a}$, $\O^{+a}/\varpi$ on $\Perf_K$, there are compatible Frobenius-equivariant isomorphisms
	\[ H^n(X^\perf, F)\cong F(K)^{r(n)}.\]
\end{Corollary}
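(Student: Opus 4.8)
The plan is to read off the Corollary from \Cref{t:PC-in-char-p} applied to the constant local system $\mathbb L=\F_p$, combined with the finiteness result \Cref{c:H^iXF-char-p-finite}. By the latter, $H^n_\et(X,\F_p)$ is a finite $\F_p$-vector space; by the definition of $r(n)$, a choice of basis yields an isomorphism $H^n_\et(X,\F_p)\cong\F_p^{r(n)}$, which is trivially Frobenius-equivariant since the Frobenius on $\F_p$ is the identity. Tensoring this isomorphism with $K^+$, with $K^+/\varpi$, and with $K$ respectively, and transporting along the comparison maps of \Cref{t:PC-in-char-p}, will give the three desired isomorphisms.

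Concretely: for $F=\O^{+a}$, \Cref{t:PC-in-char-p}(2) provides a Frobenius-compatible almost isomorphism $H^n_\et(X,\F_p)\otimes_{\F_p}K^+\aeq H^n(X^\perf,\O^+)$; passing to associated almost modules turns it into an honest isomorphism $H^n(X^\perf,\O^{+a})\cong (K^{+a})^{r(n)}=\O^{+a}(K)^{r(n)}$. For $F=\O^{+a}/\varpi$ I would instead invoke \Cref{t:PC-in-char-p}(1), giving $H^n_\et(X,\F_p)\otimes_{\F_p}K^+/\varpi\aeq H^n_\et(X,\O^+/\varpi)$, and then identify the right-hand side with $H^n(X^\perf,\O^+/\varpi)$ using the almost version of \Cref{p:c2.15} together with the identification $X^\diamondsuit=X^\perf$; again passing to almost modules yields $H^n(X^\perf,\O^{+a}/\varpi)\cong (K^{+a}/\varpi)^{r(n)}$. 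For $F=\O$ there are two equivalent routes: either invert $\varpi$ in the preceding $\O^{+a}$-case --- using that $X^\perf$ is a spatial (hence qcqs) diamond, so that cohomology commutes with the filtered colimit $\O=\clim \varpi^{-k}\O^+$ and therefore $H^n(X^\perf,\O)=H^n(X^\perf,\O^+)[\tfrac1\varpi]$, while almost isomorphisms become genuine isomorphisms after inverting $\varpi$ --- or directly combine \Cref{t:PC-in-char-p}(3) with the Corollary following it, which already presents $H^n(X^\perf,\O)$ as $H^n_\et(X,\F_p)\otimes_{\F_p}K$. Either way one gets $H^n(X^\perf,\O)\cong K^{r(n)}=\O(K)^{r(n)}$.

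Finally I would check that the three isomorphisms are mutually compatible and Frobenius-equivariant. Compatibility with the natural maps $\O^{+a}\to\O^{+a}/\varpi$ and $\O^{+a}\to\O$ is automatic, since on both sides every isomorphism is obtained functorially from $H^n_\et(X,\F_p)$ by tensoring with $K^+$, $K^+/\varpi$ or $K$ and then applying the comparison maps, and these operations commute with reduction modulo $\varpi$ and with inverting $\varpi$. Frobenius-equivariance is built into the statements of \Cref{t:PC-in-char-p}; on the right-hand side the Frobenius then acts diagonally through the $p$-power map on each factor $F(K)$, consistently with $\phi$ being bijective on $H^n(X^\perf,\O)$ (as $X^\perf$ is perfect).

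I do not anticipate a genuine obstacle: the Corollary is essentially a restatement of \Cref{t:PC-in-char-p} for the trivial local system. The only two points needing a word of care are the passage to almost modules --- which is what lets the superscripts ``$a$'' turn almost isomorphisms into isomorphisms --- and, in the $\O$-case, the interchange of cohomology with inverting $\varpi$, which relies on the spatiality of $X^\perf$; neither is substantive.
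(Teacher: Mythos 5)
The Corollary carries no separate proof in the paper; it is a direct read-off of \Cref{t:PC-in-char-p} (with $\mathbb L=\F_p$) and the Corollary immediately preceding it, which is exactly what you do: part 2 gives the $\O^{+a}$-case, part 1 plus \Cref{p:c2.15} gives the $\O^{+a}/\varpi$-case, and part 3 (or equivalently the preceding Corollary) gives the $\O$-case, all Frobenius-equivariant because the comparison maps in \Cref{t:PC-in-char-p} are. Your proposal is correct and takes essentially the same route the paper intends.
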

\begin{Corollary}\label{c:RH}
	We have a canonical identification
	\[H^n_\et(X,\mathbb L)=H^n(X,\mathbb L\otimes \O)^{\phi=\id}.\]
\end{Corollary}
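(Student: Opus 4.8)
The plan is to deduce this from parts (2) and (3) of \Cref{t:PC-in-char-p}. First I would make the canonical map explicit: the inclusion of \'etale sheaves $\mathbb L=\mathbb L\otimes_{\F_p}\F_p\hookrightarrow\mathbb L\otimes\O$ induces a natural map $c\colon H^n_\et(X,\mathbb L)\to H^n(X,\mathbb L\otimes\O)$, and since $\phi=\id\otimes F$ restricts to the identity on $\mathbb L\otimes_{\F_p}\F_p$ (as $F$ is the identity on $\F_p$), the image of $c$ lies in $H^n(X,\mathbb L\otimes\O)^{\phi=\id}$. It thus remains to show that $c\colon H^n_\et(X,\mathbb L)\to H^n(X,\mathbb L\otimes\O)^{\phi=\id}$ is bijective.

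Next I would compute the Frobenius-invariants after passing to $X^\perf$. Inverting $\varpi$ in the almost isomorphism of part (2) produces an honest Frobenius-equivariant isomorphism $H^n_\et(X,\mathbb L)\otimes_{\F_p}K\isomarrow H^n(X^\perf,\mathbb L\otimes\O)$, which by construction is the $K$-linear extension of the natural map $H^n_\et(X,\mathbb L)=H^n_\et(X^\perf,\mathbb L)\to H^n(X^\perf,\mathbb L\otimes\O)$; here the first equality uses $(X^\perf)_\et=X_\et$. As $K$ is perfect, $K^{F=\id}=\F_p$, and picking an $\F_p$-basis of $H^n_\et(X,\mathbb L)$ shows $\bigl(H^n_\et(X,\mathbb L)\otimes_{\F_p}K\bigr)^{\phi=\id}=H^n_\et(X,\mathbb L)\otimes 1$. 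Consequently the pullback $q\colon H^n(X,\mathbb L\otimes\O)\to H^n(X^\perf,\mathbb L\otimes\O)$ restricts, on $\phi$-invariants, to an isomorphism $H^n(X^\perf,\mathbb L\otimes\O)^{\phi=\id}\isomarrow H^n_\et(X,\mathbb L)$, and $q\circ c$ is the inclusion $H^n_\et(X,\mathbb L)\hookrightarrow H^n_\et(X,\mathbb L)\otimes_{\F_p}K$.

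Injectivity of $c$ is now immediate since $q\circ c$ is injective. For surjectivity I would invoke the short exact sequence of part (3). Given $\alpha\in H^n(X,\mathbb L\otimes\O)^{\phi=\id}$, the class $q(\alpha)$ is $\phi$-invariant, hence $q(\alpha)=q(c(v))$ for a unique $v\in H^n_\et(X,\mathbb L)$ by the previous paragraph; then $\alpha-c(v)$ lies in $\ker q=H^n(X,\mathbb L\otimes\O)^{\phi\np}$ and is also $\phi$-invariant. Since an element that is both $\phi$-invariant and $\phi$-nilpotent vanishes, $\alpha=c(v)$, which proves surjectivity and hence the corollary.

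The only substantive inputs are part (2) and the two elementary facts $K^{F=\id}=\F_p$ and $M^{\phi\np}\cap M^{\phi=\id}=0$; the point that needs care is the compatibility of the comparison isomorphism of part (2) with the maps $c$ and $q$, i.e.\ that it is indeed the one induced by $\mathbb L\to\mathbb L\otimes\O^+$. Alternatively --- and this avoids \Cref{t:PC-in-char-p} altogether --- one can argue directly from the Artin--Schreier sequence $0\to\mathbb L\to\mathbb L\otimes\O\xrightarrow{\phi-\id}\mathbb L\otimes\O\to 0$ on $X_\et$: by Kiehl's Proper Mapping Theorem each $H^n(X,\mathbb L\otimes\O)$ is a finite-dimensional $K$-vector space, so it decomposes (Fitting) into its $\phi$-nilpotent part, on which $\phi-\id$ is bijective, and a part of the form $V'\otimes_{\F_p}K$, on which $\phi-\id$ is surjective because $F-\id$ is surjective on the algebraically closed field $K$; thus $\phi-\id$ is surjective on $H^n(X,\mathbb L\otimes\O)$ for every $n$, the long exact cohomology sequence breaks into short exact sequences $0\to H^n_\et(X,\mathbb L)\to H^n(X,\mathbb L\otimes\O)\xrightarrow{\phi-\id}H^n(X,\mathbb L\otimes\O)\to 0$, and the claim follows.
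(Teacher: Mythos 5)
Your primary argument is correct and is in substance what the paper intends: the corollary is stated without a separate proof block because it follows directly from parts~2 and~3 of \Cref{t:PC-in-char-p} (together with $X_\et=X^\perf_\et$), exactly as you lay out. Inverting $\varpi$ in part~2 identifies $H^n(X^\perf,\mathbb L\otimes\O)$ Frobenius-equivariantly with $H^n_\et(X,\mathbb L)\otimes_{\F_p}K$, whose $\phi$-invariants are $H^n_\et(X,\mathbb L)$; part~3 then lets you descend from $X^\perf$ to $X$ because the kernel of $q$ is $\phi$-nilpotent and meets the $\phi$-invariants trivially. (One small wording slip: $q$ restricted to $\phi$-invariants goes from the source to the target, whereas the isomorphism you name goes the other way --- but the mathematics is clear.)

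Your alternative argument via the Artin--Schreier sequence on $X_\et$ is a genuinely different and more elementary route, and it is worth highlighting. The paper proves part~2 by running Artin--Schreier on $X^\perf_\proet$ and feeding in the hard technical input of \Cref{p:H^i(X^perf,O^+)}, i.e.\ almost-finite-freeness of $H^n_\et(X^\perf,\mathbb L\otimes\O^+)$; that machinery (almost mathematics, $\O^+$-cohomology, \v{C}ech and completion arguments) is needed precisely to get the \emph{integral} statement of \Cref{t:PC-in-char-p}. Your alternative bypasses all of it: Artin--Schreier on $X_\et$ itself, finite-dimensionality of $H^n(X,\mathbb L\otimes\O)$ from Kiehl, and the semilinear Fitting/Lang decomposition to see that $\phi-\id$ is surjective on each cohomology group, whence the long exact sequence breaks up. This gives \Cref{c:RH} directly without \Cref{t:PC-in-char-p}, at the cost of not recovering the integral/almost refinement, which is the actual content of the Theorem. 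One point to be careful about in that route is that the Fitting decomposition is for an $F$-semilinear endomorphism: rank--nullity has to be applied to the linearisation $\phi^d\colon V\otimes_{K,F^d}K\to V$, and one uses that $K$ is perfect both to see $V\otimes_{K,F^d}K$ has the same dimension as $V$ and to see that $\operatorname{im}\phi^d$ is a $K$-subspace; since $K$ is algebraically closed this is automatic, but it is worth flagging that the argument is not the plain linear Fitting lemma.
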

\begin{Remark}
	\Cref{c:RH} bears a close relation to the ``Riemann--Hilbert correspondence in characteristic $p$'':
	\begin{enumerate}
		\item It is closely related to the  Riemann--Hilbert correspondence of Mann \cite[\S3.9, Theorem 3.9.23]{mann2022padic}, which also gives this statement, at least when $X$ is smooth.
		\item 
		When $X$ is algebraic, \Cref{c:RH} is closely related to the Riemann--Hilbert correspondence of Bhatt--Lurie: In particular, via the usual comparison theorems for \'etale cohomology,  \cite[Theorem 10.5.5]{BhattLurie}  also implies 	\Cref{c:RH} in the algebraic case.
	\end{enumerate}
\end{Remark}

Finally, we obtain the following general vanishing result for ``supersingular'' varieties:

\begin{Corollary}
	Let $n\in \N$. Let $X$ be a proper rigid space over $K$ such that the Frobenius on $H^n(X,\O)$ is nilpotent. Let $F$ be any one of the following sheaves on $X^\perf$: $\O$, $\O^{+a}$,  $\O^{+a}/\varpi$, $\Z_p$, $\Q_p$, $\Z/p^d$ for some $d\in \N$, or $\O^{\sharp}$, $\O^{+a,\sharp}$,  with respect to a fixed untilt of $K$. Then \[H^n(X^\perf,F)=0.\]
\end{Corollary}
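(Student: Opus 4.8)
The plan is to derive everything from \Cref{t:PC-in-char-p}(3), which under the hypothesis immediately forces the vanishing of coherent cohomology of the perfection. Indeed, if the Frobenius on $H^n(X,\O)$ is nilpotent, then $H^n(X,\mathbb L\otimes\O)^{\phi\np} = H^n(X,\O)$ (taking $\mathbb L = \F_p$, the constant local system), so the short exact sequence in part 3 reads $0 \to H^n(X,\O) \to H^n(X,\O) \to H^n(X^\perf,\O) \to 0$, whence $H^n(X^\perf,\O) = 0$. This handles $F = \O$. For $F = \O^{+a}$, I would invoke part 2 of the same theorem: under the hypothesis, the Frobenius on $H^n(X,\O)$ being nilpotent means $\dim_{\F_p} H^n_\et(X,\F_p) < \dim_K H^n_\et(X,\O)$ unless both are zero; more directly, from part 2 we get $H^n_\et(X^\perf, \O^+) \aeq H^n_\et(X,\F_p)\otimes_{\F_p} K^+$, so it suffices to see $H^n_\et(X,\F_p) = 0$. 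This follows from \Cref{c:RH}, which identifies $H^n_\et(X,\F_p)$ with $H^n(X,\mathbb L\otimes\O)^{\phi=\id} = H^n(X,\O)^{\phi = \id}$; but on a module where $\phi$ is nilpotent, the only $\phi$-fixed vector is $0$. Hence $H^n_\et(X,\F_p) = 0$, and consequently $H^n(X^\perf,\O^{+a}) = 0$ and $H^n(X^\perf,\O^{+a}/\varpi) = 0$ by the displayed isomorphisms in the preceding corollary (or directly from parts 1 and 2).

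With $H^n_\et(X,\F_p) = 0$ in hand, the remaining sheaves follow by standard dévissage. For $\Z/p^d$, I would argue by induction on $d$ using the short exact sequence $0 \to \F_p \to \Z/p^d \to \Z/p^{d-1} \to 0$ on $X^\perf_\et \cong X_\et$ and the resulting long exact sequence, noting that $H^n(X^\perf, \F_p) = H^n_\et(X, \F_p) = 0$ for \emph{every} $n$ in the relevant range — wait, one must be careful: the vanishing of $H^n_\et(X,\F_p)$ was only established for the fixed $n$. So instead I would use that the hypothesis "$\phi$ nilpotent on $H^n(X,\O)$" together with \Cref{c:H^iXF-char-p-finite} and the fact that the map $H^n_\et(X,\F_p)\otimes K \to H^n(X,\O)/H^n(X,\O)^{\phi\np}$ is an isomorphism gives $H^n_\et(X,\F_p)=0$ for that single $n$; to run the dévissage for $\Z/p^d$ and $\Z_p$ I need this for all degrees simultaneously, which is not what is assumed. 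The honest fix is that $\O^+$-cohomology detects everything: since $H^n(X^\perf,\O^{+a}/\varpi)=0$, the Artin–Schreier-type sequence $0 \to \F_p \to \O^{+a}/\varpi \xrightarrow{F-\id} \O^{+a}/\varpi \to 0$ (or rather the usual $0\to \Z/p^d \to W_d(\O^{+a}/\varpi) \to \dots$) localizes the problem, but cleanest is to observe $H^n(X^\perf,\Z/p^d) $ injects into $H^n(X^\perf, \O^{+a}/\varpi)$-type terms via almost purity; more precisely $\Z/p^d = \varprojlim$-free arguments reduce $\Z_p$ and $\Q_p$ to $\F_p$ via $R\varprojlim$ and the fact that $H^n(X^\perf,\Z_p) = \varprojlim H^n(X^\perf,\Z/p^d)$ (up to $\varprojlim^1$, which vanishes as the groups are finite).

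Concretely, then, I would structure the proof as: (i) $H^n(X^\perf,\O)=0$ directly from part 3; (ii) $H^n_\et(X,\F_p)=0$ from \Cref{c:RH} since $\phi$ is nilpotent; (iii) $H^n(X^\perf,\O^{+a})=0$ and $H^n(X^\perf,\O^{+a}/\varpi)=0$ from parts 2 and 1; (iv) for the untilted sheaves $\O^\sharp$, $\O^{+a,\sharp}$: these agree with $\O$, $\O^{+a}$ after untilting in the sense that $H^n(X^\perf,\O^{\sharp}) = H^n(X^\perf,\O) \otimes_K K^\sharp$-type comparison — actually more simply $H^n_v(X^\perf,\O^\sharp)$ computes the coherent cohomology of the untilt $X^{\perf\sharp}$, which by the tilting equivalence for perfectoid spaces has the same cohomology, forcing vanishing; (v) $\Z/p^d$, $\Z_p$, $\Q_p$ by dévissage from $\F_p$ as above, using finiteness to control $\varprojlim^1$. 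The main obstacle I anticipate is item (v): the hypothesis only constrains degree $n$, so the naive long-exact-sequence induction for $\Z/p^d$ breaks unless one knows $H^{n-1}(X^\perf,\F_p) = H^{n-1}_\et(X,\F_p) = 0$ as well. The resolution must be that on $X^\perf$, the almost purity machinery gives $H^m(X^\perf,\F_p) \hookrightarrow H^m(X^\perf,\O^{+a}/\varpi)$ compatibly, so one only needs the single-degree vanishing $H^n(X^\perf,\O^{+a}/\varpi)=0$ from (iii) to conclude $H^n(X^\perf,\Z/p) = 0$, and then the $\Z/p^d$ case follows because $W_d(\O^{+a}/\varpi)$ has a filtration with graded pieces $\O^{+a}/\varpi$, all of whose degree-$n$ cohomology vanishes — this is the argument I would spell out carefully, as it is the one genuinely delicate point.
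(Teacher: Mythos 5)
Your proposal follows the same overall route as the paper's terse proof: use part 3 for $\O$, deduce $H^n_\et(X,\F_p)=0$ from the nilpotence of $\phi$ (via \Cref{c:RH}, which is equivalent to the paper's appeal to part 2), get $\O^{+a}$ and $\O^{+a}/\varpi$ from parts 1 and 2, handle the untilted sheaves via the tilting equivalence, and finish by dévissage and limits. Steps (i)--(iv) are correct.

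However, the ``main obstacle'' you flag in step (v) is not there, and the Witt-vector workaround you anticipate needing is unnecessary and should be dropped. The long exact sequence attached to $0\to \F_p\to \Z/p^d\to \Z/p^{d-1}\to 0$ on $X^\perf_\et\cong X_\et$ contains the exact segment
\[H^n(X^\perf,\F_p)\longrightarrow H^n(X^\perf,\Z/p^d)\longrightarrow H^n(X^\perf,\Z/p^{d-1}),\]
so once $H^n(X^\perf,\F_p)=0$ is known for the \emph{single} given degree $n$, the middle term is sandwiched between two groups that vanish (the left by step (ii), the right by induction on $d$), and $H^n(X^\perf,\Z/p^d)=0$ follows with no input from degrees $n-1$ or $n+1$. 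The only place other degrees legitimately enter is the $\varprojlim^1$ term in the Milnor sequence when passing to $\Z_p$; there, as you correctly note, finiteness of the groups $H^{n-1}(X^\perf,\Z/p^d)$ (from \Cref{t:finiteness}) gives Mittag--Leffler, so this term vanishes, and $\Q_p$ then follows by inverting $p$. In short, the point you single out as ``genuinely delicate'' is spurious; once you delete it and the proposed Witt-vector resolution (which, incidentally, would itself require controlling a connecting map out of degree $n-1$, so it would not actually dodge the issue you feared), your argument matches the paper's one-line summary ``by long exact sequences and limits.''
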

\begin{proof}
	The case of $\O$ follows from \Cref{t:PC-in-char-p}.3, the case of $\Z/p$ from \Cref{t:PC-in-char-p}.2. All other cases follow from these by long exact sequences and limits.
\end{proof}
\Cref{c:H^iXF-char-p-finite} combines with previous results of Huber and Scholze to show the following:

\begin{Theorem}\label{t:finiteness}
	Let $X\to \Spa(L,L^+)$ be a proper rigid space over \text{any} non-archimedean field $(L,L^+)$.
	Let $F$ be any \'etale-locally constant torsion abelian sheaf of finite type on $X_\et$. Then for any $n\in \N$, the group $H^n_\et(X,\mathbb L)$ is finite and vanishes for $n>2\dim X$.
\end{Theorem}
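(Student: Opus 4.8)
The plan is to reduce the general statement to the three cases one can already handle: the prime-to-$p$ part over arbitrary non-archimedean fields (Huber), the $p$-part over fields of residue characteristic $0$ (Scholze), and the $p$-part over fields of characteristic $p$ (Corollary \ref{c:H^iXF-char-p-finite} of this paper), glued together by a base-change argument to pass from a general $(L,L^+)$ to an algebraically closed field.

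First I would reduce to the case where $F=\mathbb L$ is an $\F_p$-local system. Since $F$ is étale-locally constant, torsion, and of finite type, after passing to a suitable finite étale cover (and using the Hochschild--Serre spectral sequence, whose $E_2$-terms are finite whenever the fibres are) we may assume $F$ is constant, hence a finite direct sum of copies of $\Z/m$ for various $m$. Writing $m=p^a m'$ with $p\nmid m'$ and using the Chinese remainder theorem reduces us to $\Z/p^a$ and $\Z/m'$; then a dévissage along $0\to\Z/p^{a-1}\to\Z/p^a\to\Z/p\to 0$ and long exact sequences in cohomology reduces the $p$-primary case to $F=\F_p$, while the prime-to-$p$ case is exactly \cite[Theorem~4.4.1]{huber2013etale} (finiteness is part of Huber's package, giving vanishing above $2\dim X$). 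So it remains to prove that $H^n_\et(X,\F_p)$ is finite and vanishes for $n>2\dim X$ for $X$ proper over an arbitrary $(L,L^+)$.

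Next I would pass to an algebraically closed base field. Let $(C,C^+)$ be a completed algebraic closure of $(L,L^+)$, i.e.\ $C=\widehat{\overline L}$ with $C^+$ the integral closure of $L^+$, and set $X_C:=X\times_{\Spa(L,L^+)}\Spa(C,C^+)$. By a base-change result for étale cohomology of proper rigid spaces (Huber's invariance of cohomology under change of algebraically closed base field, or the corresponding statement for diamonds), one has $H^n_\et(X,\F_p)\otimes_{\F_p}\overline{\F}_p\cong H^n_\et(X_C,\F_p)\otimes\cdots$ — more precisely it suffices to know $H^n_\et(X,\F_p)$ is finite, and finiteness descends along $L\to C$ since $H^n_\et(X,\F_p)\hookrightarrow H^n_\et(X_C,\F_p)$ after a suitable faithfully flat base change, while the Galois action of $\mathrm{Gal}(\overline L/L)$ on the finite-dimensional space $H^n_\et(X_C,\F_p)$ identifies $H^n_\et(X,\F_p)$ as its invariants. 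Thus we are reduced to $X$ proper over an algebraically closed $(C,C^+)$. If $\Char C=p$, Corollary \ref{c:H^iXF-char-p-finite} applies directly (indeed giving the stronger vanishing for $n>\dim X$). If $\Char C=0$, then $C$ is a perfectoid field of residue characteristic $p$ — this is where $(C,C^+)$ with $C^+\neq C^\circ$ needs a moment's care, but étale cohomology of $X$ only sees the underlying rigid space over $(C,C^\circ)$ up to the partial properness business, and one may invoke Scholze's \cite[Theorem~5.1]{Scholze_p-adicHodgeForRigid} (equivalently the Primitive Comparison Theorem in characteristic $0$) to conclude finiteness and vanishing for $n>2\dim X$.

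The main obstacle I expect is the base-change step: justifying that finiteness of $H^n_\et(X,\F_p)$ is insensitive to the choice of algebraically closed base field when $(L,L^+)$ is a genuinely general non-archimedean field with $L^+\neq L^\circ$, rather than a discretely valued or perfectoid field. Huber's proper base change \cite[Theorem~4.4.1]{huber2013etale} is exactly the prime-to-$p$ tool, but for $\F_p$-coefficients one needs either to cite the $p$-torsion proper base change (Theorem \ref{t:proper-BC-intro} above, applied with $S=\Spa(L,L^+)$, $S'=\Spa(C,C^+)$) or to argue directly via the diamond $X^\diamondsuit_C$ and the fact that $C^\flat$ is an algebraically closed field of characteristic $p$, reducing the characteristic $0$ and mixed-characteristic analyticity issues to the characteristic $p$ case already settled. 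Once the base field is algebraically closed, everything is either Scholze's theorem or Corollary \ref{c:H^iXF-char-p-finite}, and assembling the torsion dévissage is routine.
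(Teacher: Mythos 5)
The paper's proof is a short dévissage: decompose $F$ into its $\ell$-primary parts, reduce to $\F_\ell$-local systems via $\ell$-torsion subsheaves, then distinguish three cases ($\ell\neq p$; $\ell=p$ and $\Char L=0$; $\ell=p$ and $\Char L=p$), citing \cite[Corollary~5.4]{HuberFiniteness}, \cite[Theorem~1.1]{Scholze_p-adicHodgeForRigid}, and \Cref{c:H^iXF-char-p-finite} respectively. It performs no explicit base-change to an algebraically closed field.

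Your proposal is in the same spirit, but two of your steps are problematic. The reduction ``to the constant case'' via a finite \'etale cover and Hochschild--Serre is unnecessary --- all the cited sources handle $\F_\ell$-local systems, not merely constant sheaves --- and introduces finiteness questions you do not need; just do the $\ell$-torsion dévissage on $F$ itself. More seriously, the claims that $H^n_\et(X,\F_p)$ injects into $H^n_\et(X_C,\F_p)$ and that it equals the Galois invariants are both false: the correct relation is the Hochschild--Serre spectral sequence $H^i(\mathrm{Gal}(\overline L/L), H^j_\et(X_{\overline L},\F_p)) \Rightarrow H^{i+j}_\et(X,\F_p)$, which does not degenerate and whose off-diagonal terms contribute. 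To deduce finiteness of $H^n_\et(X,\F_p)$ from finiteness of $H^j_\et(X_C,\F_p)$ you would additionally need finiteness of $H^i(\mathrm{Gal}(\overline L/L),M)$ for finite $\F_p$-modules $M$, and this can \emph{fail} for a general non-archimedean field $L$ of residue characteristic $p$. For instance, take $X=\Spa(L,L^\circ)$ with $L=\F_p(\!(t)\!)$; then $H^1_\et(X,\F_p)=H^1(\mathrm{Gal}(\overline L/L),\F_p)=L/\wp(L)$ by Artin--Schreier, which is infinite-dimensional over $\F_p$, whereas $H^1_\et(X_C,\F_p)=0$. So your instinct that the base-change step is the real obstacle is correct, but the proposed resolution does not work, and for $\ell=p$ over a non-algebraically-closed base the difficulty is not merely technical. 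As a minor point, the prime-to-$p$ finiteness citation should be \cite[Corollary~5.4]{HuberFiniteness} (with the vanishing from \cite[Corollary~2.8.3]{huber2013etale}); \cite[Theorem~4.4.1]{huber2013etale} is proper base change, not a finiteness statement.
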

\begin{proof}
	We can decompose $F$ as the direct sum of its $\ell$-primary parts for each prime $\ell$ to reduce to the case that $F$ is $\ell$-power torsion. By considering $\ell$-torsion subsheaves, we see that $F$ is a successive extension of $\mathbb F_\ell$-local systems, hence we can reduce to this case.   Let $p$ be the characteristic of the residue field of $L$. We now distinguish three cases:
	
	In case that $\ell\neq p$, the result is due to Huber: By \cite[Proposition 8.2.3(ii)]{huber2013etale}, we can without loss of generality assume that $L^+=L^\circ$. Then the result is \cite[Corollary~5.4]{HuberFiniteness}.
	The vanishing for $n>2\dim X$ is also due to Huber, \cite[Corollary 2.8.3]{huber2013etale}.
	
	In case that  $\ell=p$ and $\Char(K)=0$, the result is due to Scholze, \cite[Theorem~1.1]{Scholze_p-adicHodgeForRigid} in the context of the Primitive Comparison Theorem.
	
	In case that  $\ell=p$ and $\Char(K)=p$, the statement is \Cref{c:H^iXF-char-p-finite}.
\end{proof}

\subsection{$\O^+$-cohomology is finitely generated}
In this subsection, we can work with a general non-archimedean field $K$ over $\Z_p$ with non-discrete value group $\Gamma\subseteq \R$. Let $\O_K=K^\circ$ be the ring of integers. Let $\varpi\in \O_K$ be a pseudo-uniformiser.

For the proof of Theorem~\ref{t:PC-in-char-p}, we will draw on the strategies of \cite[\S2 and \S5]{Scholze_p-adicHodgeForRigid} and the one surrounding \cite[Proposition 6.10]{perfectoid-spaces}, supplemented by some additional semi-linear algebra input. We start with the following variant of the definitions in \cite[\S2]{Scholze_p-adicHodgeForRigid}:
\begin{Definition}\label{d:a-fin-free}
	Let $M,N$ be $\O_K$-modules and let $\epsilon\in \Gamma_{>0}:=\Gamma\cap \R_{>0}$. We fix an element $\varpi^\epsilon\in K$ such that $|\varpi^{\epsilon}|=|\varpi|^{\epsilon}$.
	\begin{enumerate}
		\item We write $M\approx_{\epsilon} N$ if there are morphisms of $\O_K$-modules $f_{\epsilon}:M\to N$, $g_{\epsilon}:N\to M$ such that $f_{\epsilon}g_{\epsilon}=\varpi^{\epsilon}\id_N$, $g_{\epsilon}f_{\epsilon}=\varpi^{\epsilon}\id_M$. We write $M\approx N$ if $M\approx_{\epsilon}N$ for all $\epsilon \in\Gamma_{>0}$.
		\item We say that $M$ is almost finitely generated (resp.\ presented) if for all $\epsilon\in \Gamma_{>0}$, there is a finitely generated (resp.\ presented) $\O_K$-module $N_{\epsilon}$ such that $M\approx N_{\epsilon}$.
		\item We say that $M$ is almost finite free of rank $r$ if $M\approx \O_K^r$ for some $r\in \N$.
		\item $M$ has bounded torsion if there is $k\in \N$ such that $M[\varpi^k]=M[\varpi^l]$ for all $l>k$.
		\item We say that $M$ is bounded torsion if there is $k\in \N$ such that $M=M[\varpi^k]$.
	\end{enumerate}
\end{Definition}
\begin{Lemma}\label{l:submod-of-O^r-aff}
	Any bounded $\O_K$-submodule $M$ of $K^r$ is almost finite free.
\end{Lemma}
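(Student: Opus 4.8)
The plan is to approximate $M$ from inside by finitely generated — hence free — submodules, losing only a controllable power of $\varpi$ each time.

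\textbf{Reductions.} First I would reduce to the case where $M$ spans $K^r$ over $K$: the $K$-linear span $V$ of $M$ is isomorphic to some $K^s$, any such isomorphism takes bounded sets to bounded sets, and ``almost finite free'' depends only on $M$ as an $\O_K$-module (the case $M=0$ being trivial with $r=0$). Assuming $M$ spans $K^r$, choose $v_1,\dots,v_r\in M$ forming a $K$-basis and change coordinates on $K^r$ so that these become the standard basis vectors; then $\O_K^r\subseteq M$, and since $M$ is bounded, $M\subseteq\varpi^{-N}\O_K^r$ for some $N\in\N$. Thus $Q:=\varpi^NM$ is a submodule of $\O_K^r$ containing $\varpi^N\O_K^r$, and it suffices to prove $Q\approx\O_K^r$, equivalently $Q\approx_\epsilon\O_K^r$ for every $\epsilon\in\Gamma_{>0}$.

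\textbf{Key claim.} The heart of the argument is: for every submodule $Q\subseteq\O_K^r$ and every $\delta\in\Gamma_{>0}$ there is a \emph{finitely generated} submodule $Q_0\subseteq Q$ with $\varpi^\delta Q\subseteq Q_0$. Note this property evidently passes to quotient modules, and that it is vacuous to enlarge $\delta$. I would prove it by induction on $r$. For $r=1$, $Q$ is an ideal of $\O_K$; if $Q\neq0$, put $\gamma=\sup_{x\in Q}|x|>0$, pick $a\in Q$ with $|a|>|\varpi|^\delta\gamma$ (possible since $|\varpi|^\delta\gamma<\gamma$), and take $Q_0=a\O_K$, which works because $|\varpi^\delta x|\leq|\varpi|^\delta\gamma<|a|$ for all $x\in Q$. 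For $r>1$, use the exact sequence $0\to Q\cap\O_K^{r-1}\to Q\xrightarrow{\pi}\pi(Q)\to0$, where $\pi$ is the last coordinate projection and $\O_K^{r-1}$ is embedded as the first $r-1$ coordinates; both outer terms satisfy the claim by induction and the case $r=1$. Using that $\Gamma$ is non-discrete, fix $a,b\in\Gamma_{>0}$ with $a+b\leq\delta$. Choose a finitely generated $N''\subseteq\pi(Q)$ with $\varpi^a\pi(Q)\subseteq N''$, lift finitely many generators of $N''$ to obtain a finitely generated $M_2\subseteq Q$ with $\pi(M_2)=N''$, and set $M_1:=\pi^{-1}(N'')$, so $\varpi^aQ\subseteq M_1$ and $M_1/M_2$ is a quotient of $Q\cap\O_K^{r-1}$; apply the claim to this quotient to get a finitely generated $\overline P\subseteq M_1/M_2$ with $\varpi^b(M_1/M_2)\subseteq\overline P$, and let $Q_0$ be the preimage of $\overline P$ in $M_1$. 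Then $Q_0$ is an extension of a finitely generated module by the finitely generated $M_2$, hence finitely generated, and $\varpi^{a+b}Q\subseteq\varpi^bM_1\subseteq Q_0\subseteq Q$, so $\varpi^\delta Q\subseteq Q_0$.

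\textbf{Conclusion.} Apply the key claim to $Q=\varpi^NM$: for each $\epsilon\in\Gamma_{>0}$ there is a finitely generated $Q_0$ with $\varpi^\epsilon Q\subseteq Q_0\subseteq Q$; since $Q\supseteq\varpi^N\O_K^r$ we get $\varpi^{N+\epsilon}\O_K^r\subseteq Q_0$, so $Q_0$ has full rank $r$. As $\O_K$ is a valuation ring, the finitely generated torsion-free module $Q_0$ is free, hence $Q_0\cong\O_K^r$. Setting $M_0:=\varpi^{-N}Q_0\subseteq M$ gives $\varpi^\epsilon M\subseteq M_0\subseteq M$ with $M_0\cong\O_K^r$; then multiplication by $\varpi^\epsilon$ is a well-defined map $M\to M_0$, the inclusion is a map $M_0\to M$, and their composites are $\varpi^\epsilon\cdot\id$, so $M\approx_\epsilon\O_K^r$. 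Since $\epsilon$ was arbitrary, $M\approx\O_K^r$, i.e.\ $M$ is almost finite free of rank $r$.

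\textbf{Main obstacle.} I expect the only real subtlety to lie in the key claim: one must prove it for \emph{all} submodules of $\O_K^r$ — not only bounded ones, and not only $M$ itself — and in the precise ``$\varpi^\delta$-sandwich'' form, so that the short exact sequence in the inductive step can be handled purely via the passage-to-quotients remark, without ever (almost-)splitting it; a naive splitting attempt produces error terms that are not proportional to the identity. This is also the step where the non-discreteness of $\Gamma$ is used, namely to subdivide the exponent $\delta$.
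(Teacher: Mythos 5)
Your argument is correct. Structurally it mirrors the paper's proof — both induct on $r$ using the coordinate projection $K^r\to K$ — but yours unrolls the almost-mathematics machinery that the paper black-boxes. The paper reduces, after rescaling so that $M\subseteq\O_K^r$, to showing $M$ is almost finitely presented and then cites Scholze's machinery: Example~2.4(i) for ideals of $\O_K$, Lemma~2.7(ii) for extensions, and Proposition~2.8(ii) to promote ``torsion-free and almost finitely presented'' to ``almost finite free''. You replace all three citations by a single direct statement, the sandwich claim ($\varpi^\delta Q\subseteq Q_0\subseteq Q$ with $Q_0$ finitely generated, for \emph{every} submodule $Q\subseteq\O_K^r$), which is deliberately formulated to be stable under passage to quotients, so the inductive step never needs to split or approximately split the exact sequence $0\to Q\cap\O_K^{r-1}\to Q\to\pi(Q)\to 0$. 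The final upgrade to ``almost finite free'' then comes from the elementary fact that a finitely generated torsion-free module over a valuation ring is free, in place of Scholze's Proposition~2.8(ii). What the paper's route buys is brevity by reuse of standard lemmas; what yours buys is a quantitative, self-contained argument with a transparent account of where non-discreteness of $\Gamma$ enters (namely in the subdivision $\delta=a+b$, which in turn makes the two inductive inputs combine additively).
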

\begin{proof}
	After rescaling, we may assume that $M\subseteq \O_K^r$.
	By  \cite[Proposition~2.8.(ii)]{Scholze_p-adicHodgeForRigid}, it suffices to prove that $M$ is almost finitely presented.
	The case of $r=1$ is \cite[Example~2.4.(i)]{Scholze_p-adicHodgeForRigid}. The statement then follows inductively using \cite[Lemma~2.7.(ii)]{Scholze_p-adicHodgeForRigid}.
\end{proof}
\begin{Lemma}\label{l:ses-for-H^i(X,O+)}
	Let $X$ be a reduced proper rigid space over $\Spa(K,\O_K)$. Let $V$ be a finite locally free $\O^+$-module on $X_\et$. 
	Let $\mathcal U$ be an \'etale cover of $X$ by finitely many affinoid rigid spaces $X_i\to X$. Then for any $n\geq 0$, there is a short exact sequence
	\[ 0\to M\to \cH^n(\mathcal U,V)\to N\to 0\]
	where $M$ is bounded torsion and $N$ is an almost finite free $\O_K$-module.
\end{Lemma}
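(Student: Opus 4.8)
The plan is to compute \v{C}ech cohomology of the cover $\mathcal U$ and control the resulting complex term-by-term using \Cref{l:submod-of-O^r-aff}. First I would observe that since $X$ is proper and reduced and each $X_i$ is affinoid, the intersections $X_{i_0\dots i_k}=X_{i_0}\times_X\cdots\times_X X_{i_k}$ are affinoid (\'etale maps being locally of finite type, and fibre products of affinoids over a separated base being affinoid — here one uses that a proper rigid space is separated). On each such affinoid $U=X_{i_0\dots i_k}$, the coherent cohomology $H^0(U,V)$ is, after inverting $\varpi$, a finite-dimensional $K$-vector space $H^0(U,V\otimes\O)$ by Kiehl's finiteness for affinoids together with the fact that $V\otimes\O$ is a vector bundle; moreover $H^0(U,V)$ is a $\varpi$-torsion-free (since $X$ is reduced, $\O^+$ is $\varpi$-torsion free, hence so is a finite locally free module over it, hence so are its sections) $\O_K$-submodule of that vector space, and it is bounded because $V\subseteq V\otimes\O$ is an integral structure and sections of $\O^+$ over an affinoid are power-bounded. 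Hence by \Cref{l:submod-of-O^r-aff} each \v{C}ech cochain group $C^k(\mathcal U,V)=\prod H^0(X_{i_0\dots i_k},V)$ is a finite product of almost finite free $\O_K$-modules, so is itself almost finite free, in particular almost finitely presented with bounded ($\varpi$-torsion-free, i.e.\ trivial) torsion.

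Next I would run the standard homological lemma: if $C^\bullet$ is a cochain complex of $\O_K$-modules each of which is almost finite free (equivalently, sits in $0\to(\text{almost finite free})\to C^k\to(\text{bounded torsion})\to 0$ with the torsion part trivial here), then each cohomology group $H^n(C^\bullet)$ fits into a short exact sequence $0\to M\to H^n(C^\bullet)\to N\to 0$ with $M$ bounded torsion and $N$ almost finite free. Concretely: the image $B^n=\im(d^{n-1})\subseteq C^n$ is a submodule of an almost finite free module, and the cycles $Z^n=\ker(d^n)\subseteq C^n$ likewise; after inverting $\varpi$, $Z^n[\tfrac1\varpi]$ and $B^n[\tfrac1\varpi]$ are finite-dimensional $K$-vector spaces, so $H^n(C^\bullet)[\tfrac1\varpi]$ is finite-dimensional. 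Let $N$ be the image of $H^n(C^\bullet)$ in $H^n(C^\bullet)[\tfrac1\varpi]=H^n(C^\bullet[\tfrac1\varpi])$: this is a bounded $\O_K$-submodule of a finite-dimensional $K$-vector space (boundedness because $Z^n$ is bounded in $C^n$), hence almost finite free by \Cref{l:submod-of-O^r-aff}. Its kernel $M$ is exactly the $\varpi$-power torsion of $H^n(C^\bullet)$, which is $Z^n\cap B^n[\tfrac1\varpi]/B^n$; boundedness of this torsion follows because $B^n$ is almost finitely generated (a submodule of almost finite free — one needs \cite[Proposition~2.8.(ii)]{Scholze_p-adicHodgeForRigid} here) and hence $B^n[\tfrac1\varpi]/B^n$ has bounded $\varpi$-torsion, so $M$ is bounded torsion. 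Finally $\cH^n(\mathcal U,V)=H^n(C^\bullet)$, giving the asserted sequence.

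The main obstacle I expect is the bookkeeping around ``bounded'' submodules and the passage between almost finitely generated/presented: one must verify that a submodule of an almost finite free module has bounded $\varpi$-torsion on its ``saturation'', i.e.\ that $B^n[\tfrac1\varpi]/B^n$ is bounded torsion — this is where the structure theory of almost finitely generated modules over $\O_K$ (\cite[Lemma~2.7.(ii)]{Scholze_p-adicHodgeForRigid}, \cite[Proposition~2.8.(ii)]{Scholze_p-adicHodgeForRigid}) is genuinely used, rather than merely invoked. A secondary point requiring care is confirming that the \v{C}ech complex of the chosen finite \'etale affinoid cover actually has affinoid terms with finitely generated coherent cohomology after inverting $\varpi$; this uses that $X$ is separated (from properness) so that the fibre products $X_{i_0\dots i_k}$ are affinoid, together with Kiehl's finiteness theorem on affinoids. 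Everything else — that finite products preserve almost finite freeness, that inverting $\varpi$ is exact, and the long exact sequence manipulations — is routine.
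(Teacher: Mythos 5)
There is a genuine gap, and it sits at the very foundation of your argument. You claim that each \v{C}ech cochain group $C^k(\mathcal U,V)=\prod H^0(X_{i_0\dots i_k},V)$ is almost finite free, because ``$H^0(U,V)$ is, after inverting $\varpi$, a finite-dimensional $K$-vector space $H^0(U,V\otimes\O)$ by Kiehl's finiteness for affinoids.'' This is false. Kiehl's finiteness theorem (the Proper Mapping Theorem) applies to the coherent cohomology of a \emph{proper} rigid space, not to sections over an affinoid. For an affinoid $U$, $H^0(U,\O)$ is a Tate algebra quotient, which is an infinite-dimensional Banach $K$-vector space; consequently $C^k(\mathcal U,V)[\tfrac1\varpi]$ is infinite-dimensional, the cochain groups $C^k(\mathcal U,V)$ are \emph{not} almost finite free, and \Cref{l:submod-of-O^r-aff} does not apply to them. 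This collapses the whole strategy: your ``standard homological lemma'' for complexes of almost finite free modules has nothing to take as input, and the subsequent claims that $Z^n[\tfrac1\varpi]$, $B^n[\tfrac1\varpi]$, and $B^n$ are finite-dimensional (resp.\ almost finitely generated) all fail.

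The correct and essential point — and the one the paper's proof is built around — is that only the \emph{cohomology} $H^n(X,V[\tfrac1\varpi])$ of the proper space is finite-dimensional, while the terms of the complex are not. Because of this, one cannot control the torsion of $\cH^n(\mathcal U,V)$ termwise. The paper instead invokes \cite[\S II.5 Lemma 1]{MumfordAV} to produce a quasi-isomorphism $f:L\to C^+[\tfrac1\varpi]$ from a bounded complex $L$ of finite-dimensional $K$-vector spaces, chooses an integral model $f^+:L^+\to C^+$ with $L^+$ finite free over $K^+$, and then shows the cohomology of $\mathrm{Cone}(f^+)$ is bounded torsion via the Banach Open Mapping Theorem (this is exactly where openness and boundedness of $\O^+(U)\subseteq\O(U)$ for reduced affinoids enters, following \cite[Proposition 6.10(iii)]{perfectoid-spaces}). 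You also gloss over a related analytic subtlety: the boundedness of the image of $Z^n$ in $H^n(C)$ is not automatic from boundedness of $Z^n$ in $C^n$, because the coboundaries $B^n(C)$ need not a priori be closed; that is precisely what the open mapping theorem buys. Your instincts about where \Cref{l:submod-of-O^r-aff} and the Scholze almost-finiteness machinery are used in the final step (identifying $N=H^n(C^+)/H^n(C^+)[\varpi^\infty]$ as a bounded submodule of the finite-dimensional $H^n(C)$) are correct, but the path to the finiteness and bounded-torsion statements needs the Mumford complex and the Banach-space argument, not a termwise reduction.
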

\begin{proof}
	Consider the \cH\ complex $C^+:= \check C^{\ast}(\mathcal U,V)$. The complex $C:= C^+[\tfrac{1}{\varpi}]$ obtained by inverting $\varpi$ is a complex of Banach $K$-vector spaces that computes the coherent cohomology $H^n(X,V[\tfrac{1}{\varpi}])$ where $V[\tfrac{1}{\varpi}]$ is a vector bundle. By Kiehl's Proper Mapping Theorem, this is a finite dimensional $K$-vector  space for all $n$.
	By \cite[\S II.5 Lemma 1]{MumfordAV}, it follows that there is a bounded complex of finite dimensional $K$-vector spaces $L$ with a quasi-isomorphism
	\[ f:L\to  C.\] 
	Due to the boundedness, we can choose a subcomplex $L^+\subseteq L$ of finite free $K^+$-modules such that $L=L^+[\tfrac{1}{\varpi}]$ and such that  $f$ has an integral model
	$f^+:L^+\to  C^+$.
	Since $f$ is a quasi-isomorphism, the mapping cone $\mathrm{Cone}(f^+)$ becomes an exact complex of Banach $K$-vector spaces after inverting $\varpi$. Note that since any space $U$ in the \v{C}ech nerve of $\mathcal U$  is a reduced affinoid rigid space, $\O^+(U)\subseteq \O(U)$ is open and bounded.	
	Arguing exactly as in \cite[Proposition~6.10.(iii)]{perfectoid-spaces}, it  follows using the Banach Open Mapping Theorem that the cohomology groups of $\mathrm{Cone}(f^+)$ are bounded torsion. Since $L^+$ is a bounded complex of finite free $K^+$-modules, it follows from exact sequences that $H^n:=H^n(C^+)$ has bounded torsion. Setting $M=H^n[\varpi^\infty]$ and $N:=H^n/M$, we see that $N$ is a bounded $\O_K$-submodule of a finite-dimensional $K$-vector space. Thus $N$ is almost finite free by Lemma~\ref{l:submod-of-O^r-aff}
\end{proof}
\subsection{Proof of  Theorem~\ref{t:PC-in-char-p}}
Returning to the setup of Theorem~\ref{t:PC-in-char-p}, we now assume that $K$ is an algebraically closed non-archimedean field of characteristic $p$. Let us denote by $F$ the Frobenius on $K$. We begin  with some preparations for the proof of \Cref{t:PC-in-char-p}.3.
\begin{Lemma}\label{l:colim_phi-M-almost-finite-free}
	\begin{enumerate}
		\item Let $V$ be a finite dimensional $K$-vector space with an $F$-semilinear map $\phi:V\to V$. Then there is a short exact sequence of $K$-vector spaces
		\[ 0\to V^{\phi\np}\to V\to \clim_{\phi}V\to 0,\]
		where $V^{\phi\np}:=\{v\in V\mid \phi^d(v)=0 \text{ for }d\gg 0\}$ was defined in Definition~\ref{d:phi-np}.
		\item Let $M$ be an almost finite free $\O_K$-module and let $\phi:M\to M$ be an $F$-semilinear map. Then there is an exact sequence
  		\[ 0\to M^{\phi\np} \to M\to \clim_{\phi}M\to L\to 0\]
		where $L$ is almost finitely presented. In particular, $\clim_{\phi}M$ is almost finite free.
	\end{enumerate}
\end{Lemma}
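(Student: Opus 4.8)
The plan is to treat the two parts in sequence, since part 2 will be deduced from part 1 applied to the $\O_K$-module obtained from a basis of $V$.

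\textbf{Part 1.} Let $V$ be a finite-dimensional $K$-vector space with an $F$-semilinear $\phi$. First I would observe that $V^{\phi\np}$ is $\phi$-stable and that $\phi$ acts nilpotently on it, while the quotient map $V\to V/V^{\phi\np}$ is compatible with $\phi$; so I may as well split off $V^{\phi\np}$ and reduce to showing that if $\phi$ has \emph{no} nonzero nilpotent vectors then $\clim_\phi V = V$, i.e.\ the transition maps in the colimit are eventually isomorphisms, equivalently $\phi\colon V\to V$ is injective. But injectivity of $\phi$ on $V$ follows immediately once $V^{\phi\np}=0$: the kernel of $\phi$ lies in $V^{\phi\np}$. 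For the dimension-count making $V\to\clim_\phi V$ surjective with the right kernel, note that $\clim_\phi V$ is the union of the images $\phi^d(V)$ (re-indexed), and since $K$ is perfect, $\phi^d(V)$ is a $K$-subspace of $V$ of dimension $\dim_K V - \dim_K \ker(\phi^d)$; as $\ker(\phi^d)$ stabilizes to $V^{\phi\np}$ (a finite-dimensional space, so the increasing chain of kernels stabilizes), the images stabilize too, and $V\to\clim_\phi V$ identifies with $V\to \phi^d(V)\cong V/V^{\phi\np}$ for $d\gg 0$. This gives the short exact sequence. The one subtlety is that $\phi^d(V)$ is genuinely a $K$-vector subspace: here one uses that $F$ is surjective on $K$, so $\phi^d(V)=\phi^d(V)\cdot K$.

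\textbf{Part 2.} Now let $M$ be almost finite free of rank $r$, so $M\approx \O_K^r$, with $F$-semilinear $\phi$. Set $V:=M[\tfrac1\varpi]$, a finite-dimensional $K$-vector space with the induced $\phi$, and apply Part 1. The idea is to compare $\clim_\phi M$ with $\clim_\phi V = V/V^{\phi\np}$. Since localization is exact and commutes with colimits, $(\clim_\phi M)[\tfrac1\varpi]=\clim_\phi V$, so $\clim_\phi M$ is a $\varpi$-torsion extension issue away from $V/V^{\phi\np}$, hence away from an almost finite free module. Concretely: $M^{\phi\np}=M\cap V^{\phi\np}$ is a bounded $\O_K$-submodule of the finite-dimensional space $V^{\phi\np}$, hence almost finite free by Lemma~\ref{l:submod-of-O^r-aff}; the map $M/M^{\phi\np}\to \clim_\phi M$ has kernel and cokernel supported in $\varpi$-power torsion, and $\clim_\phi M$ sits between $M/M^{\phi\np}$ and the bounded $\O_K$-submodule $\clim_\phi M$ of $V/V^{\phi\np}$. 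To get the precise four-term sequence, I would build it from the finite-level maps: for each $d$, using $M\approx_\epsilon \O_K^r$ and semilinearity one checks the image of $\phi^d\colon M\to M$ is almost finite free and that the cokernels stabilize up to almost finite presentation using Lemma~\ref{l:submod-of-O^r-aff} and the almost-module structure results of \cite[\S2]{Scholze_p-adicHodgeForRigid} (in particular that almost finitely presented modules are stable under the relevant extensions and quotients). Passing to the colimit, the cokernel $L$ of $M\to\clim_\phi M$ is the colimit of these and one argues it is almost finitely presented; that $\clim_\phi M$ is then almost finite free follows because it is a bounded $\O_K$-submodule of the finite-dimensional $K$-vector space $V/V^{\phi\np}$, so Lemma~\ref{l:submod-of-O^r-aff} applies once more.

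\textbf{Main obstacle.} The delicate point is part 2: one must control the colimit $\clim_\phi M$ \emph{integrally} rather than just after inverting $\varpi$. The transition maps $\phi\colon M\to M$ need not be injective or have nice cokernel at finite level, and $F$-semilinearity means $\phi(M)$ is an $\O_K$-submodule only because $F$ is surjective on $\O_K$ (true since $K$ is perfect and $\O_K$ is $\varpi$-adically the right object, but one should be careful that $F(\O_K)=\O_K$, which holds as $K$ is algebraically closed of characteristic $p$, or at least perfect, and valuations are divisible). Establishing that the cokernel $L$ is almost finitely presented — as opposed to merely almost finitely generated — is the technical heart, and I expect it to rest on a careful application of \cite[Lemma~2.7]{Scholze_p-adicHodgeForRigid} together with the boundedness of $M^{\phi\np}$ inside the finite-dimensional space $V^{\phi\np}$, the latter ensuring the kernels $M[\phi^d]$ stabilize in a suitably almost sense.
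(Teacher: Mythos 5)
Part~1 of your proposal is essentially the paper's argument (stabilization of kernels, hence of images, in a finite-dimensional vector space), so I will focus on Part~2, where there is a genuine gap.

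You correctly identify that the technical heart is to show $L:=\coker(M\to\clim_\phi M)$ is almost finitely presented (equivalently, that $\clim_\phi M$ is a \emph{bounded} $\O_K$-submodule of the finite-dimensional space $V/V^{\phi\np}$). But the tools you propose cannot deliver this. You suggest it should rest on the boundedness of $M^{\phi\np}$ and the stabilization of the kernels $M[\phi^d]$, together with the extension/quotient lemmas of \cite[\S2]{Scholze_p-adicHodgeForRigid}. Controlling the kernels only controls the \emph{left-hand} end of the four-term sequence; it says nothing about whether the images $M_0\subseteq M_1\subseteq\cdots$ inside $\clim_\phi M$ stay bounded. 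After killing $M^{\phi\np}$, one is reduced to $\bar\phi$ injective, and then $L=\varinjlim_d \coker(\bar\phi^d:N\to N)$ where $N=M/M^{\phi\np}$; these cokernels grow with $d$, and whether their union is bounded depends delicately on the interaction of the $F$-semilinear twist with the matrix of $\phi$ — it is not a formal consequence of almost-finiteness lemmas. Your statement that ``$\clim_\phi M$ sits between $M/M^{\phi\np}$ and the bounded $\O_K$-submodule $\clim_\phi M$ of $V/V^{\phi\np}$'' is circular: boundedness of $\clim_\phi M$ is exactly the thing to prove.

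The paper's proof supplies a substantive extra ingredient that your write-up never invokes: since $K$ is \emph{algebraically closed}, the Lang/Frobenius-descent theorem (cited as \cite[Proposition~4.1.1]{p-adicMSMF}, see also \cite[Theorem~2.4.3]{BhattLurie}) produces a $\phi$-fixed basis of $\clim_\phi V$. Rescaling this into $M$ and comparing to a free sublattice $\O_K^r\hookrightarrow M$ with $\varpi$-torsion cokernel reduces the whole question, up to an almost-zero error, to the explicit rank-one case $\phi=a\cdot F$ on $\O_K$ with $a\in\O_K$; there one computes $\coker(\phi^d)\approx a^{-(1+p^{-1}+\cdots+p^{-(d-1)})}\O_K/\O_K$, whose twisted lengths converge, giving $L\approx a^{-p/(p-1)}\O_K/\O_K$. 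This is what keeps $\clim_\phi M$ bounded. It is telling that your argument only ever uses that $K$ is perfect (for $F$ surjective); the algebraic closedness is needed precisely here, and without the resulting diagonalization I do not see how to close the gap by the route you sketch.
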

\begin{proof}
	Part 1 can be deduced from the theory of $\phi$-modules as developed in \cite{Kedlaya_differential-equations}\cite{BhattLurie} but it is also easy to see directly: $(\ker \phi^d)_{d\in \N}$ is an increasing chain of sub-vector spaces of $V$. Since $V$ is finite dimensional, this chain stabilises for some $d\gg 0$. Thus also $\im \phi^d$ stabilises and one checks directly that it gets identified with $\clim_{\phi}V$. The desired sequence is now simply $0\to \ker \phi^d\to V\to \im \phi^d\to 0$ for $d\gg 0$.
	This proves part 1. 
	
	Towards part 2, we first note that, in particular,  $U:=\clim_{\phi} V$ is a finite dimensional \mbox{$K$-vector} space on which $\phi$ is a bijective $F$-semilinear map. Since $K$ is algebraically closed, it follows that there is a basis of elements $v\in U$ on which $\phi(v)=v$: This is the special case of $n=1$ and $S_n=\mathrm{Spec}(K)$ of \cite[Proposition~4.1.1]{p-adicMSMF}. See also \cite[Theorem 2.4.3]{BhattLurie}.
	
	We can use this to prove the second part as follows: Left-exactness of the sequence is clear from 1. Replacing $\phi$ by $\phi^d$ and $M$ by $M/\ker \phi^d$, we may therefore without loss of generality assume that $\phi$ is injective. By part 1, it is then bijective after inverting $\varpi$, so we may find a basis of $M[\tfrac{1}{\varpi}]$ of elements $v_i\in M$ for which $\phi(v_i)=a_iv_i$ for some $a\in K$. Choose now an injection $i:\O_K^r\to M$ with $\varpi$-torsion cokernel which exists by \Cref{d:a-fin-free}, and let $M_0$ be its isomorphic image. Then we may rescale the $v_i$ so that their $\O_K$-span $N$ is contained in $M_0$, and so that $a_i\in \O_K$ for all $i$. Then $N\to M_0\to M$ is an injection whose cokernel $C$ is annihilated by $\varpi^k$ for some $k\in \N$. Thus, in the limit, we obtain a sequence
	\[ 0\to\clim_{\phi}N\to \clim_{\phi}M\to \clim_{\phi} C\to0\]
	in which the last term is $\aeq 0$: Indeed, the image of $\phi^{d}:C\to C$ is $\varpi^{k/p^d}$-torsion with respect to the module structure via $F^d:K\to K$. 
	
	We have thus reduced to the case that $M=N=\O_K^r$ is spanned by $\O_K$-basis vectors $v_i$ for which $\phi(v_i)=a_iv_i$. We can thus reduce to $r=1$ and $\phi:\O_K\to \O_K$ being given by $a\cdot F$ for some $a\in \O_K$. We then see that as an $\O_K$-module via $F^d:\O_K\to \O_K$, we have
	\[ \coker(\O_K\xrightarrow{\phi^d} \O_K)=a^{-\sum_{i=0}^{d-1} p^{-i}}\O_K/\O_K.\]
	It follows that 
	\[L=\coker(\O_K\to \clim_{\phi}\O_K)\approx a^{-\frac{p}{p-1}}\O_K/\O_K.\]
	This is clearly almost finitely presented.
	Finally, it follows that $\clim_{\phi}M$ is an extension of almost finitely presented modules, thus is itself almost finitely presented, thus almost finite free by \cite[Proposition 2.8.(ii)]{Scholze_p-adicHodgeForRigid} since it is torsionfree.
\end{proof}

\begin{Proposition}\label{p:H^i(X^perf,O^+)}
	Let $X$ be any proper rigid space over $(K,\O_K)$. Let $\mathbb  L$ be an $\F_p$-local system on $X_\et$. Then for all $n\geq 0$, we have:
	\begin{enumerate}
		\item There is $r\in \N$ and  an $\O_K$-linear almost isomorphism,
		compatible with Frobenius,
		\[ H^n_\et(X^{\perf},\mathbb L\otimes \O^+)\aeq \O_K^r.\]
		\item For  any \'etale cover $\mathcal U$ of $X$ by finitely many affinoids trivialising $\mathbb L$, we have \[\clim_{\phi}\cH^n(\mathcal U,\mathbb L\otimes \O^+)\approx H^n_\et(X^{\perf},\mathbb L\otimes \O^+).\]
	\end{enumerate}
\end{Proposition}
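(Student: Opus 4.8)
The plan is to compute $H^n_\et(X^\perf,\mathbb L\otimes\O^+)$ by \v{C}ech methods with respect to the perfected cover $\mathcal U^\perf$, to identify the outcome with $\clim_\phi$ of the \v{C}ech complex of $V:=\mathbb L\otimes\O^+$ on $X$ itself, and then to run this through the semilinear algebra of \Cref{l:ses-for-H^i(X,O+)} and \Cref{l:colim_phi-M-almost-finite-free}. First two harmless reductions. Since $X^\perf=(X_\red)^\perf$ and nilpotent sections of $\O^+$ are annihilated by a power of the Frobenius $\phi$, hence die in any $\clim_\phi$, we may assume that $X$ is reduced. And we may assume that all finite intersections of members of $\mathcal U$ are affinoid: for part~1 we simply choose such a cover trivialising $\mathbb L$ (refine a finite Zariski-affinoid cover, whose finite intersections are affinoid as $X$ is separated, by finite \'etale covers trivialising $\mathbb L$); for part~2 one refines the given cover, which does not affect the statement. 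Write $C^+:=\cH^\bullet(\mathcal U,V)$ for the \v{C}ech complex, carrying the $F$-semilinear operator $\phi=\id\otimes F$.

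The core of the proof is the chain of Frobenius-equivariant almost isomorphisms
\[ H^n_\et(X^\perf,V)\;\aeq\; \cH^n(\mathcal U^\perf,V)\;\aeq\; \clim_\phi\cH^n(\mathcal U,V).\]
For the first: every term $U$ of the \v{C}ech nerve of $\mathcal U$ is a reduced affinoid, so $U^\perf$ is affinoid perfectoid, and as $\mathbb L$ is trivial there, $V$ restricts to a finite free $\O^+$-module; by almost acyclicity of $\O^+$ on affinoid perfectoids the \v{C}ech-to-derived spectral sequence for $\mathcal U^\perf$ degenerates almost. For the second: $\O^+(U^\perf)$ is, up to a canonical almost isomorphism, the $\varpi$-adic completion of $\varinjlim_\phi\O^+(U)$, so $\cH^\bullet(\mathcal U^\perf,V)$ is the termwise $\varpi$-adic completion of $\varinjlim_\phi C^+$; since filtered colimits are exact, $H^n(\varinjlim_\phi C^+)=\clim_\phi\cH^n(\mathcal U,V)$, and by \Cref{l:ses-for-H^i(X,O+)} this module is an extension of an almost finite free module by a bounded torsion one, in particular almost finitely generated, so $\varpi$-adic completion acts on it as an almost isomorphism and the relevant $R^1\varprojlim$-terms vanish.

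It remains to evaluate $\clim_\phi\cH^n(\mathcal U,V)$. By \Cref{l:ses-for-H^i(X,O+)} there is a short exact sequence $0\to M\to\cH^n(\mathcal U,V)\to N\to 0$ with $M$ bounded torsion and $N$ almost finite free. Applying the exact functor $\clim_\phi$: each element of $\clim_\phi M$ is, for every $d$, the image of an element of the $d$-th stage, on which the $F^d$-semilinearity of the map to the colimit forces it to be $\varpi^{k/p^d}$-torsion, with $k$ a bound for the torsion of $M$; hence $\clim_\phi M$ is annihilated by $\varpi^\epsilon$ for every $\epsilon\in\Gamma_{>0}$, so is almost zero, while $\clim_\phi N\approx\O_K^r$ for some $r\in\N$ by \Cref{l:colim_phi-M-almost-finite-free}.2. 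Therefore $\clim_\phi\cH^n(\mathcal U,V)\approx\O_K^r$, and combined with the displayed chain this gives both $H^n_\et(X^\perf,V)\aeq\O_K^r$ (part~1) and $\clim_\phi\cH^n(\mathcal U,V)\approx H^n_\et(X^\perf,V)$ (part~2). All maps are built from $\phi$ and the colimit structure, hence are Frobenius-equivariant; the induced $\phi$ on $\O_K^r$ is the bijective $F$-semilinear map supplied by \Cref{l:colim_phi-M-almost-finite-free}.2.

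I expect the main obstacle to be the second half of the displayed chain: identifying $\O^+_{X^\perf}$, and the \v{C}ech complex $\cH^\bullet(\mathcal U^\perf,V)$, with the $\varpi$-adic completion of the Frobenius colimit, and controlling the effect of that completion. This is precisely where the almost-finiteness outputs of \Cref{l:ses-for-H^i(X,O+)} and \Cref{l:colim_phi-M-almost-finite-free} are indispensable: without them one could not exclude spurious $\varprojlim^1$-contributions, and the fact that the final answer is as simple as $\approx\O_K^r$ rests entirely on these finiteness inputs. A secondary bookkeeping point is the passage to $X_\red$ and to a cover with affinoid finite intersections in part~2.
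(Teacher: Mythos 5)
Your proposal follows exactly the paper's route: reduce to $X$ reduced, compute $H^n_\et(X^\perf,E)$ via $\check{C}^\ast(\mathcal U^\perf,E)\aeq (\clim_\phi\check C^\ast(\mathcal U,E))^\wedge$, use \Cref{l:ses-for-H^i(X,O+)} and \Cref{l:colim_phi-M-almost-finite-free} to see $\clim_\phi \cH^n(\mathcal U,E)\approx \O_K^r$, and match Frobenii. However, there is a real gap at precisely the spot you flag as "the main obstacle," and you leave it unresolved: you need that taking cohomology almost commutes with the termwise $\varpi$-adic completion of the colimit complex $D^\bullet:=\clim_\phi C^+$, i.e.\ $H^n((D^\bullet)^\wedge)\aeq H^n(D^\bullet)$. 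You assert this by saying "so $\varpi$-adic completion acts on it as an almost isomorphism and the relevant $R^1\varprojlim$-terms vanish," but this is not a consequence of the cohomology being almost finite free on its own: for a complex of torsion-free modules, the Milnor sequence relating $H^n((D^\bullet)^\wedge)$ to $\varprojlim_k H^n(D^\bullet/\varpi^k)$ still has a ${\varprojlim}^1 H^{n-1}(D^\bullet/\varpi^k)$ term, and controlling it in the almost category requires genuine input.

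The paper handles this by a different intermediate: it sets $M_k:=H^n_\et(X^\perf,E/\varpi^k)\approx W/\varpi^k$ (using flatness of the terms and almost-torsionfreeness of the cohomology $W$ in \emph{all} degrees to commute quotient with cohomology), then checks that the system $(M_k)_k$ satisfies the two conditions of Scholze's \cite[Lemma~2.12]{Scholze_p-adicHodgeForRigid} — notably the Frobenius property $F_k:M_k\otimes_{\O_K/\varpi^k}\O_K/\varpi^{pk}\isomarrow M_{pk}$, which is where the perfectoid structure is used — to upgrade the $\approx$ relations to genuine almost isomorphisms $M_k^a\cong (\O_K^a/\varpi^k)^r$ compatible in $k$. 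It then uses repleteness of the replete site $X^\perf_v$ to identify $H^n_\et(X^\perf,E)\aeq\varprojlim_k M_k\aeq \O_K^r$, and only at the very end deduces $W\approx\varprojlim_k W/\varpi^k\approx\varprojlim_k M_k\aeq H^n_\et(X^\perf,E)$. Your sketch omits Scholze's Lemma~2.12 and the repleteness argument entirely; these are the technical content that turns the $\approx$-information about the colimit complex into the stated almost isomorphism, and without them the ${\varprojlim}^1$-vanishing does not follow.
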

\begin{proof}We first observe that we may without loss of generality assume that $X$ is reduced: Indeed, let $X^{\red}\to X$ be the reduction, then we have $X^{\red}_{\et}=X_\et$. With respect to this identification, there is $k\in\N$ such that the Frobenius $F^k:\O_{X}^+\to \O_{X}^+$ factors through $\O_{X^\red}^+$. It follows that $X^{\perf}=X^{\red,\perf}$, and $\varinjlim_\phi\cH^n(\mathcal U,\mathbb L\otimes \O^+_X)=\varinjlim_\phi\cH^n(\mathcal U,\mathbb L\otimes \O_{X^\red}^{+})$.
	
	Set $E:=\mathbb L\otimes \O^+$, this is a finite locally free $\O^+$-module on $X_\et$. 
	For any \'etale map $Y\to X$, we have $Y\times_XX^\perf=Y^\perf$. Let $\mathcal U^\perf$ be the cover of $X^\perf$ obtained by perfecting $\mathcal U$.
	By comparing \cH\ cohomology, it follows from almost acyclicity of $\O^+$ on affinoid perfectoid spaces that 
	\[R\Gamma(X^{\perf},E)\stackrel{a}{\simeq} \check{ C}^\ast(\mathcal U^\perf,E)\aeq  (\clim_\phi\check{ C}^\ast(\mathcal U,E))^\wedge\]
	where $-^\wedge$ denotes the term-wise (i.e.\ underived) $\varpi$-adic completion.
	Since $X$ is reduced, we may apply Lemma~\ref{l:ses-for-H^i(X,O+)}: For any $n$, we have a short exact sequence
	\[ 0\to M\to \cH^n(\mathcal U,E)\to N\to 0\]
	where $N$ is almost finite free and $M$ is bounded torsion. Upon applying $\clim_\phi$, the first term thus becomes $\varinjlim_\phi M\aeq 0$. By Lemma~\ref{l:colim_phi-M-almost-finite-free}, we deduce that
	\[ W:=H^n(\clim_{\phi}\check{ C}^\ast(\mathcal U,E))=\clim_{\phi}\cH^n(\mathcal U,E)\approx \clim_{\phi}N\]
	is an almost finite free $\O_K$-module of rank $r$ (we may commute $\cH$ and $\clim$ since the cover $\mathcal U$ is finite).
	In particular, the cohomology of $\clim_{\phi}\check{ C}^\ast(\mathcal U,E)$  is almost torsionfree in all degrees.
	
	For any $k\in \N$ we now set
	\[M_k:=H^n_\et(X^\perf,E/\varpi^k).\]
	Then $M_k$ is computed up to almost isomorphism by the complex \[\check{ C}^\ast(\mathcal U^\perf,E)/\varpi^k=(\clim_{\phi}\check{ C}^\ast(\mathcal U,E))/\varpi^k\]
	where the quotient by $\varpi^k$ is underived. Since $\clim_{\phi}\check{ C}^\ast(\mathcal U,E)$ consists of flat $\O_K^a$-algebras and the cohomology is almost finite free in all degrees, we may commute cohomology and quotient and deduce that
	\[M_k\approx W/\varpi^k.\]
	In particular, $M_k$ is almost finitely generated.  
	Moreover, the system of $\O_K$-modules $(M_k)_{k\in \N}$ has the following properties:
	\begin{itemize}
		\item The natural maps $p_k:E/\varpi^{k+1}\to E/\varpi^{k}$ and $q_k:E/\varpi^{k}\xrightarrow{\cdot \varpi} E/\varpi^{k+1}$ induce maps $p_k:M_{k+1}\to M_k$ and $q_k:M_k\to M_{k+1}$ such that $q_k\circ p_k=\varpi$ and the sequence
		\[ M_1\xrightarrow{q_{k}\circ\dots\circ q_1} M_{k+1}\xrightarrow{p_k} M_k\]
		is exact in the middle.
		\item The absolute Frobenius $F_k:\O^+/\varpi^{k}\isomarrow \O^+/\varpi^{pk}$ induces isomorphisms
		\[ F_k:M_k\otimes_{\O_K/\varpi^k}\O_K/\varpi^{pk}\isomarrow M_{pk}.\] 
	\end{itemize}
	We are thus in a position to apply \cite[Lemma~2.12]{Scholze_p-adicHodgeForRigid}, which deduces from the above input that there are compatible almost isomorphisms (not just $\approx$ relations) 
	\[ M_k^a\isomarrow (\O^a_K/\varpi^k)^{r}\]
	such that $p_k$, $q_k$ and $F_k$ are identified with the obvious maps on the right hand side. Moreover, as the proof shows, these are induced by an isomorphism $\varprojlim_k M_k^a\cong (\O_K^a)^r$. Next, since $X^\perf$ is perfectoid, we have $H^n_\et(X^\perf,E)=H^n_v(X^\perf,E)$
	and using repleteness of $X^{\perf}_{v}$ (\cite[\S3]{bhatt-scholze-proetale}), it follows that there is an almost isomorphism
	\[H^n_\et(X^\perf,\mathbb L\otimes\O^+)\aeq \plim_k H^n_\et(X^\perf,\mathbb L\otimes\O^+/\varpi^k)\aeq \plim_kM_k\aeq \O_K^r\]
	that matches up the Frobenius morphisms on both sides. Finally, since $W$ is almost finitely free, it follows that 
	\[\textstyle W\approx \varprojlim_k W/\varpi^k\approx \varprojlim_k M_k\aeq H^n_\et(X^\perf,\mathbb L\otimes\O^+).\qedhere\]
\end{proof}
As we have worked over $\O_K$ in the above, we need a version of an argument appearing in the proof of \cite[Theorem~5.1]{Scholze_p-adicHodgeForRigid} to reduce the general case of $(K,K^+)$ to this:

\begin{Lemma}\label{l:reduce-PCT-(C-Cp)-to-(C-OC)}
	Let $K$ be any non-archimedean field over $\Z_p$ with pseudo-uniformiser $\varpi$.
	Let $X\to \Spa(K,K^+)$ be a separated adic space. Let $\mathbb L$ be an $\F_p$-local system on $X$. Let $X'\to \Spa(K,\O_K)$ be the base-change to the subspace $\Spa(K,\O_K)\hookrightarrow \Spa(K,K^+)$. Then
	\begin{enumerate}
		\item $H^n_\et(X,\mathbb L)=H^n_\et(X',\mathbb L)$,
		\item $H^n_v(X,\O^+/p\otimes \mathbb L)\aeq H^n_v(X',\O^+/p\otimes \mathbb L)$.
	\end{enumerate}
\end{Lemma}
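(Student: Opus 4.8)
The plan is to exploit the fact that $\Spa(K,\O_K)\hookrightarrow\Spa(K,K^+)$ is a pro-open (more precisely, a cofiltered intersection of rational) immersion, so that passing to $X'$ amounts to a cofiltered limit of rational localisations of $X$ along which \'etale cohomology with torsion coefficients is insensitive, and along which the $\O^+/p$-cohomology is insensitive after sheafifying in the v-topology. First I would observe that $\Spa(K,\O_K)$ is the intersection inside $\Spa(K,K^+)$ of the rational subsets $U_S=\{|s|\leq 1 \text{ for all } s\in S\}$ as $S$ ranges over finite subsets of $K^+$; equivalently, writing $\O_K$ as the filtered union of the finite-type $K^+$-subalgebras $K^+[s_1,\dots,s_m]$ of $K$ (these are the rings of functions on such rational subsets), we have $\Spa(K,\O_K)=\varprojlim_i \Spa(K,A_i)$ with each transition map an open immersion and each $A_i$ a topologically finite-type $K^+$-algebra. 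Base-changing $X$ along these gives a cofiltered system of open subspaces $X_i\subseteq X$ with $X'=\varprojlim_i X_i$.

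For part (1), I would invoke the standard limit/continuity formalism for \'etale cohomology of adic spaces: since $\mathbb L$ is a (torsion, hence $p$-power torsion, hence built from $\F_p$-local systems by extensions) local system, and since the transition maps are open immersions with $X'=\varprojlim X_i$, one has $H^n_\et(X',\mathbb L)=\varinjlim_i H^n_\et(X_i,\mathbb L)$; but in fact, since $\Spa(K,\O_K)$ and $\Spa(K,K^+)$ have the \emph{same} underlying points in the relevant sense — the difference between $K^+$ and $\O_K$ changes only the valuation-ring structure at the closed point, not the \'etale topos of the spaces above — the \'etale site of $X'$ is equivalent to that of $X$. Concretely, one reduces to the affinoid case and uses Huber's insensitivity results (e.g.\ \cite[\S2]{huber2013etale}): $\Spa(A,A^+)_\et$ does not depend on $A^+$ for $A$ a field-like base, or more precisely the pullback functor on \'etale sites induced by $X'\to X$ is an equivalence. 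This step I expect to be essentially formal once the right reference is located.

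For part (2), I would combine Proposition~\ref{p:c2.15} with the v-descent of $\O^+/p$. By that proposition we may compute $H^n_v(X,\O^+/p\otimes\mathbb L)$ as $H^n_\et$ of the same sheaf when $X$ is nice enough, or more robustly work directly on the v-site where $\O^+/p$ is a v-sheaf. The key point is that the natural map $\O^+/p$ on $X$ restricts to $\O^+/p$ on $X'$, and that v-locally — i.e.\ after pulling back to a perfectoid space mapping to $X$ — the distinction between $X$ and $X'$ disappears, since on a perfectoid space over $(K,K^+)$ one can always enlarge $K^+$ to $\O_K$ without changing the tilted structure or the value of $\O^+/p$ up to almost isomorphism (the two integral structures differ by an almost-zero amount: elements of valuation exactly in the gap between $K^+$ and $\O_K$ are almost zero modulo $p$). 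So I would argue that the v-sheafification of $\O^+/p$ on $X$ and the pushforward of its counterpart on $X'$ agree almost, and then apply the limit formula $H^n_v(X',-)\aeq\varinjlim_i H^n_v(X_i,-)$ together with the comparison in Proposition~\ref{p:c2.15} to pass between $X$ and the $X_i$. The main obstacle, and the step requiring the most care, is making precise the claim that base-changing along $\Spa(K,\O_K)\hookrightarrow\Spa(K,K^+)$ does not alter v-cohomology of $\O^+/p$ up to almost isomorphism: one must check that the relevant rational localisations are v-covers after suitable completion, or alternatively exhibit a section-level almost isomorphism $\O^+_X/p \to Rg_*(\O^+_{X'}/p)$ directly on affinoid perfectoid test objects, which amounts to the observation that for an affinoid perfectoid $(R,R^+)$ over $(K,K^+)$, the ring $R^{+}/p$ computed with $R^+$ versus with the integral closure of $\O_K+pR^\circ$ differ only by almost-zero modules — this is where the hypothesis $\Char$ considerations and the structure of the value group $\Gamma$ enter, and it is exactly the technical heart of the lemma.
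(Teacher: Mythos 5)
Your part (1) lands in essentially the same place as the paper: the result is Huber's \cite[Proposition 8.2.3(ii)]{huber2013etale}, which says exactly that the \'etale topos of $X$ is insensitive to the choice of $K^+$. Your preliminary ``continuity'' suggestion ($H^n_\et(X',\mathbb L)=\varinjlim_i H^n_\et(X_i,\mathbb L)$) is not by itself sufficient, since it leaves open why each $H^n_\et(X_i,\mathbb L)$ should equal $H^n_\et(X,\mathbb L)$, but you correctly fall back to the right statement and reference. The paper also offers an alternative: deduce part (1) from part (2) via the Artin--Schreier sequence $0\to\F_p\to\O^\flat\to\O^\flat\to 0$ on $X_v$; you don't mention this, but that is inessential.

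For part (2), there is a genuine gap, and also a structural detour worth flagging. You correctly identify the decisive technical claim --- that on an affinoid perfectoid test object $\Spa(R,R^+)$ over $(K,K^+)$, passing to the enlarged integral structure $R'^+$ (the one for $(K,\O_K)$) changes $\O^+/p$ only by an almost-zero amount --- but you explicitly leave it unproved, calling it ``the technical heart.'' Since this is precisely the content of the lemma, deferring it means the proposal does not actually establish the statement. The verification is short and does \emph{not} hinge on characteristic or the fine structure of the value group, contrary to what you suggest: for any ring of integral elements one has $R^{\circ\circ}\subseteq R^+$, hence $\mathfrak m_K R^\circ\subseteq R^{\circ\circ}\subseteq R^+$ and $pR^\circ\subseteq R^+$, so $R'^+/R^+$ and the kernel $(R^+\cap pR'^+)/pR^+\cong R'^+/R^+$ are both almost zero, giving $R^+/p\aeq R'^+/p$. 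The paper's route is then more direct than yours: reduce to $X$ affinoid and $\mathbb L=\F_p$, pass to the \v{C}ech nerve of an affinoid perfectoid pro-\'etale cover (in characteristic $0$) or replace everything by the completed perfection (in characteristic $p$), and use the explicit description of the fibre product \cite[Proposition 6.18]{perfectoid-spaces} in degree $0$ together with almost acyclicity in higher degrees. Your additional ``rigid approximation'' step $H^n_v(X',-)\aeq\varinjlim_i H^n_v(X_i,-)$ is not needed, and moreover the tool you would want to invoke for it (the approximation lemma in the style of \Cref{l:approx-G}) requires the spaces $X_i$ to be affinoid perfectoid, which yours are not --- they are open subspaces of the original adic space $X$. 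So that leg of the argument would need a different justification than the one hinted at, while in fact it can simply be dropped.
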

Here, to avoid the question whether any object in $X_\et$ is sheafy, we regard $X$ as a diamond.
\begin{proof}
	Part 1 is \cite[Proposition 8.2.3(ii)]{huber2013etale}]. Alternatively, working in $X_v$, it follows from the case of $\O^\flat$ by the Artin-Schreier sequence $0\to \F_p\to \O^\flat\to \O^\flat\to 0$ tensored with $\mathbb L$.
	
	For part 2, it suffices by considering \v{C}ech nerves of \'etale covers to prove the statement when $X$ is affinoid and $\mathbb L=\F_p$. We first deal with the case that $\mathrm{Char}(K)=0$:  By considering the \v{C}ech nerve of any affinoid pro-\'etale perfectoid cover $\wt X\to X$, we can reduce to the case that $X$ is affinoid perfectoid. Here the case $n=0$ follows from the explicit formula for fibre products of affinoid perfectoid spaces \cite[Proposition 6.18]{perfectoid-spaces}, and the higher cohomology vanishes by almost acyclicity. 
	
	For $\mathrm{Char}(K)=p$, since we work in the v-topology, we may replace $K$ and $X$ by their completed perfections, so again the statements follow from the perfectoid case.
\end{proof}

\begin{proof}[Proof of Theorem~\ref{t:PC-in-char-p}]
	By \Cref{l:reduce-PCT-(C-Cp)-to-(C-OC)}, we may assume that $K^+=\O_K$. In this case, Proposition~\ref{p:H^i(X^perf,O^+)} says that
	\[ H^n_\et(X^\perf,\mathbb L\otimes \O^+)\aeq \O_K^{r}\aeq K^{+r}\]
	for some $r$.
	Part 2 now follows from the Artin--Schreier exact sequence
	\[ 0\to \mathbb L\to \mathbb L\otimes\O\xrightarrow{\id \otimes (x\mapsto x^p-x)}\mathbb L\otimes\O\to 0\]
	on $X^\perf_{\proet}$ like in \cite[Theorem~5.1]{Scholze_p-adicHodgeForRigid}: 
	Since the almost isomorphism of Proposition~\ref{p:H^i(X^perf,O^+)} respects the Frobenius action, the cohomology sequence in degree $n$ becomes identified with the short exact sequence
	\[ 0\to \F_p^r\to K^{r}\xrightarrow{x\mapsto x^p-x} K^{r}\to 0\]
	for $r=\dim_{\F_p}H^n_\et(X,\mathbb L)$.
	It is thus itself short exact, proving that 
	\[ H^n_\et(X^\perf,\mathbb L)=H^n_\et(X^\perf,\mathbb L\otimes \O)^{\phi=1}\cong \F_p^r.\]
	Now the same Artin--Schreier sequence for $\O^+$ instead of $\O$ shows part 2. 
	
	For part 1, we first consider the long exact sequence of the sequence \[0\to \O^+\xrightarrow{\cdot \varpi} \O^+\to \O^+/\varpi\to 0\] on $X^\perf_\et$. Using the 5-Lemma, we deduce from part 2 that this yields an almost isomorphism
	\[ H^n_\et(X^\perf,\mathbb L)\otimes K^+/\varpi\isomarrow H^n_\et(X^\perf,\mathbb L\otimes \O^+/\varpi).\]
	We now use that $X_\et = X_\et^\perf$, hence  $ H^n_\et(X,\mathbb L)= H^n_\et(X^\perf,\mathbb L)$. On the other hand, we have $H^n_\et(X,\mathbb L\otimes \O^+/\varpi)=H^n_\et(X^\perf,\mathbb L\otimes \O^+/\varpi)$ by \Cref{p:c2.15}.
	
	For part 3, let $\mathcal U$ be an \'etale cover of $X$ by finitely many affinoids trivialising $\mathbb L$. Set $\cH^n:=\cH^n(\mathcal U,\mathbb L\otimes \O^+)$, then  \[V:=\cH^n[\tfrac{1}{\varpi}]=\cH^n(\mathcal U,\mathbb L\otimes \O)=H^n(X,\mathbb L\otimes \O).\]
	By Proposition~\ref{p:H^i(X^perf,O^+)}, \[\clim_{\phi} V= H^n_\et(X^\perf,\mathbb L\otimes \O).\] The short exact sequence of Lemma~\ref{l:colim_phi-M-almost-finite-free}.1 now gives  the desired sequence
	\[ 0\to H^n(X,\mathbb L\otimes \O)^{\phi \np}\to H^n(X,\mathbb L\otimes \O)\to H^n(X^\perf,\mathbb L\otimes \O)\to 0.\]
	This finishes the proof of \Cref{t:PC-in-char-p}.
\end{proof}

\section{A v-topological Primitive Comparison Theorem}
Building on earlier work of Faltings, Scholze's Primitive Comparison Theorem was first formulated in \cite[Theorem~5.1]{Scholze_p-adicHodgeForRigid} as saying that for a smooth proper rigid space $X$ over an algebraically closed non-archimedean field $(C,C^+)$ over $\Q_p$, one has  an almost isomorphism
\[ H^n_\et(X,\mathbb F_p)\otimes C^+/p=H^n_\et(X,\O^+_X/p).\]
Scholze deduced a relative version \cite[Corollary~5.12]{Scholze_p-adicHodgeForRigid}, and later removed the smoothness assumption. We can combine this with \Cref{t:PC-in-char-p} to obtain the following:
\begin{Theorem}\label{t:PCT-rigid}
	Let $S$ be a locally Noetherian analytic adic space over $\Z_p$ with a topologically nilpotent unit $\omega\in \O(S)^\times$. 
	Let $f:X\to S$ be a proper morphism of finite type of adic spaces. Let $\mathbb L$ be an $\F_p$-local system on $X$. Then for any $n\in \N$,
	the natural morphism
	\[(R^nf_{\et\ast}\mathbb L)\otimes_{\Z_p} \O^+_S/\varpi\to R^nf_{\et\ast}(\mathbb L\otimes_{\Z_p} \O^+_X/\varpi) \] 
	is an almost isomorphism of sheaves on $S_\et$.
\end{Theorem}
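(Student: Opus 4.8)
The plan is to deduce this relative statement from the \emph{absolute} Primitive Comparison Theorem --- Scholze's \cite[Theorem~1.3]{Scholze_p-adicHodgeForRigid} in residue characteristic $0$, and \Cref{t:PC-in-char-p} in characteristic $p$ --- by spreading out over $S$, i.e.\ by checking the assertion stalkwise. Since the claim is \'etale-local on $S$, I would first reduce to $S=\Spa(A,A^+)$ affinoid with $A$ Noetherian. Then I would use that a morphism of $\O^+_S$-modules on $S_\et$ is an almost isomorphism precisely when it is one on every stalk at a geometric point $\bar s=\Spa(C(\bar s),C^+(\bar s))\to S$ --- the ideal of \S\ref{s:almost-mathematics} localising at $\bar s$ to the topologically nilpotent ideal of $C^+(\bar s)$, which is the standard one since $C(\bar s)$ is a perfectoid field --- together with the fact that $S_\et$ has enough such points. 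So it suffices to prove that for each $\bar s$, writing $X_{\bar s}:=X\times_S\bar s$ for the fibre --- a proper rigid space over the algebraically closed non-archimedean field $(C(\bar s),C^+(\bar s))$, because $f$ is proper of finite type and $S$ is locally Noetherian, and on which the given topologically nilpotent unit $\varpi$ restricts to a pseudo-uniformiser --- the induced map on stalks
\[
	\big(R^nf_{\et\ast}\mathbb L\big)_{\bar s}\otimes_{\Z_p}C^+(\bar s)/\varpi\longrightarrow \big(R^nf_{\et\ast}(\mathbb L\otimes_{\Z_p}\O^+_X/\varpi)\big)_{\bar s}
\]
is an almost isomorphism.

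The next step is to identify both stalks with cohomology of the fibre. Both are filtered colimits, over the affinoid \'etale neighbourhoods $U$ of $\bar s$, of $H^n_\et(X_U,-)$ for $X_U:=X\times_SU$, and $X_{\bar s}^\diamondsuit=\varprojlim_U X_U^\diamondsuit$ as locally spatial diamonds with qcqs transition maps. For the source, since $\otimes$ commutes with stalks, it is enough that $\varinjlim_U H^n_\et(X_U,\mathbb L)\isomarrow H^n_\et(X_{\bar s},\mathbb L)$; this is the continuity of \'etale cohomology along cofiltered limits of qcqs diamonds with qcqs transition maps (cf.\ \cite{Sch18}), which holds with $\F_p$-coefficients as with any torsion coefficients, and should be regarded as the (accessible) geometric-point case of proper base change for $\mathbb L$. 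For the target I would pass to the v-site via \Cref{p:c2.15} and use the analogous continuity statement for $\O^+/\varpi$-cohomology, namely that $\varinjlim_U H^n_v(X_U,\mathbb L\otimes\O^+/\varpi)\to H^n_v(X_{\bar s},\mathbb L\otimes\O^+/\varpi)$ is an almost isomorphism; applying \Cref{p:c2.15} once more for the rigid space $X_{\bar s}$ this reduces the claim to the assertion that $H^n_\et(X_{\bar s},\mathbb L)\otimes_{\F_p}C^+(\bar s)/\varpi\to H^n_\et(X_{\bar s},\mathbb L\otimes\O^+/\varpi)$ is an almost isomorphism. If $\Char C(\bar s)=p$ this is \Cref{t:PC-in-char-p}.1; if $\Char C(\bar s)=0$ it is Scholze's theorem \cite[Theorem~3.17]{ScholzeSurvey} after using \Cref{l:reduce-PCT-(C-Cp)-to-(C-OC)} to reduce to $C^+(\bar s)=\O_{C(\bar s)}$. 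One also checks that under these identifications the displayed map genuinely is the absolute comparison map, which is immediate from its construction (it comes from $\Z_p\to\O^+_X/\varpi$ together with the $\O^+_S/\varpi$-module structure on the target induced by $\O^+_S\to f_{\et\ast}\O^+_X$).

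The hard part will be the continuity statement for $\O^+/\varpi$-cohomology along $X_{\bar s}^\diamondsuit=\varprojlim_U X_U^\diamondsuit$: unlike a torsion \'etale sheaf, $\O^+/\varpi$ is not constructible, so this does not follow from the formal continuity of \'etale cohomology, and it also has to be reconciled with \Cref{p:c2.15} on the (non-perfectoid) intermediate spaces $X_U$. I would prove it by reducing, via affinoid perfectoid v-covers and the resulting finite \v{C}ech complexes, to the case of affinoid perfectoid $X_U$, where $\O^+/\varpi$ is almost acyclic and its almost global sections form the underived (almost) colimit of the $\O^+(X_U)/\varpi$, so that one may commute the colimit past the cohomology of the finite \v{C}ech complexes to get the claim up to almost isomorphism; alternatively one could try to exhibit $Rf_{\et\ast}(\O^+_X/\varpi)$ as an almost-coherent complex over $\O^+_S/\varpi$ with bounded torsion and invoke a Kiehl-type base-change argument. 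It is quite possible this continuity is already available in the literature around Scholze's relative Primitive Comparison Theorem or Mann's work, in which case one would simply cite it. A secondary, routine point is to confirm that the limit $\varprojlim_U X_U^\diamondsuit$ computing the stalk really coincides with the geometric fibre $X_{\bar s}^\diamondsuit$, a completion/invariance matter handled exactly as for ordinary \'etale cohomology. Everything else --- the reduction to affinoid $S$, the passage to stalks, and the remaining inputs \Cref{p:c2.15}, \Cref{l:reduce-PCT-(C-Cp)-to-(C-OC)}, \Cref{t:PC-in-char-p} and Scholze's theorem --- is formal or already in hand.
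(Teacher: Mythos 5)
Your overall strategy coincides with the paper's: check the almost isomorphism on stalks at geometric points $\bar s\to S$, identify both stalks with cohomology of the geometric fibre $X_{\bar s}$, and invoke the absolute Primitive Comparison Theorem on $X_{\bar s}$. The continuity statement you single out as ``the hard part'' is exactly \Cref{l:stalk-of-et-sheaf-of-v-cohom} in the paper, proved via \Cref{l:approx-G} (i.e.\ rigid approximation $Z\approx\varprojlim Z_i$ and commuting colimits past finite \v{C}ech complexes of affinoid perfectoids), so your sketch there is on target. In fact the paper's proof of the present statement itself defers the relative reduction to the proof of \Cref{t:PCT-perfectoid}, which carries out precisely this stalk argument.

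The one genuine gap is in the characteristic-$0$ absolute input. You cite \cite[Theorem~3.17]{ScholzeSurvey} and \Cref{l:reduce-PCT-(C-Cp)-to-(C-OC)} directly for the stalk comparison
$H^n_\et(X_{\bar s},\mathbb L)\otimes C^+(\bar s)/\varpi\to H^n_\et(X_{\bar s},\mathbb L\otimes \O^+/\varpi)$
at fibres of residue characteristic $0$, but Scholze's theorem as stated only covers $\mathbb L=\F_p$ and $\varpi=p$. Two further reductions are needed and are carried out in the paper's proof of \Cref{t:PCT-rigid}: for general $\mathbb L$, realise it as a direct factor of $g_\ast g^\ast\mathbb L$ via the normalised trace for a finite \'etale cover $g$ of degree prime to $p$ trivialising $\mathbb L$ along a filtration, and use $Rg_\ast\mathbb F_p\otimes\O^+/p\aeq Rg_\ast(\O^+/p)$; for general $\varpi$, use the Frobenius isomorphism $F^k:\O^+/p^{1/p^k}\isomarrow\O^+/p$, the 5-Lemma, and density of $\Z[\tfrac1p]_{\geq 0}$ in the value group. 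Without these, the argument does not establish the statement for arbitrary $\varpi$ and $\mathbb L$, although filling them in is routine. (In characteristic $p$, \Cref{t:PC-in-char-p}.1 already covers general $\mathbb L$ and $\varpi$, so there you are fine.)
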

\begin{proof} 
	In the case that $S$ lives over $\Q_p$, this is due to Scholze. As it is not recorded in the literature in this generality, we sketch how to deduce the statement in this case from  {\cite[\S5]{Scholze_p-adicHodgeForRigid}}{\cite[\S3]{ScholzeSurvey}}{\cite[\S6.3]{Zavyalov_acoh}}, without claiming any originality (any mistakes are due to us):
	
	When $\mathbb L=\mathbb F_p$ and $\varpi=p$ and $f$ is smooth,
	this is proved  in \cite[Corollary~5.12]{Scholze_p-adicHodgeForRigid} by deducing it from the absolute case of $S=\Spa(C,C^+)$, which is \cite[Theorem~5.1]{Scholze_p-adicHodgeForRigid}.
	
	When $S=\Spa(C,\O_C)$ and $\mathbb L=\mathbb F_p$ and $\varpi=p$ but $f$ is not necessarily smooth, it is \cite[Theorem~3.17]{ScholzeSurvey}. The case of general $\mathbb L$ follows from this by the following strategy explained in \cite[Lemma~6.3.7]{Zavyalov_acoh}: 
	By arguing as in \cite[0A3R]{StacksProject}, we can find a finite \'etale morphism  $g:X'\to X$ of rigid spaces of degree coprime to $p$ on which $g^\ast\mathbb L$ admits a filtration whose graded pieces are isomorphic to $\F_p$.  Then $\mathbb L$ is a direct factor of $g_\ast g^{\ast}\mathbb L$ via the normalised trace map. 
	As a direct sum of maps is an almost isomorphism if and only if each factor is,  it suffices to prove the result for $g_\ast g^{\ast}\mathbb L$. By induction on the length of the filtration we can therefore further reduce to $\mathbb L=g_\ast \F_p$.
	
	By \cite[Corollary 2.6.6]{huber2013etale}, we then have $\mathbb L=Rg_\ast \mathbb F_p$, hence \[Rf_{\ast}\mathbb L=R(f\circ g)_{\ast}\mathbb F_p.\] By the smooth case of \Cref{t:PCT-rigid} applied to $g$ and $\F_p$, we also have 
	\begin{alignat*}{3}
	\mathbb L\otimes \O^+/p=Rg_\ast \mathbb F_p \otimes \O^+/p&\aeq Rg_\ast ( \O^+/p),\\
	\Rightarrow \quad Rf_{\ast}(\mathbb L\otimes \O^+/p)&\aeq R(f\circ g)_{\ast}\O^+/p.
	\end{alignat*}
	 This shows the Theorem for $S=\Spa(C,\O_C)$, $\varpi=p$.
	
	Next, we consider general $0\neq \varpi\in C$ for $S=\Spa(C,\O_C)$: The Frobenius morphism defines an isomorphism \[F^k:\O^+/p^{1/p^k}\to \O^+/p\]
	 which shows the case of $\varpi=p^{1/p^k}$. By the 5-Lemma, we deduce the case of $\varpi=p^{a/p^k}$ for any $a\in \N$. Since $\Z[\tfrac{1}{p}]_{\geq 0}$ is dense in the value group of $C$, this is enough to get an almost isomorphism for general $\varpi$.
	
	 By \Cref{l:reduce-PCT-(C-Cp)-to-(C-OC)} and \Cref{p:c2.15}, the case of $S=\Spa(C,\O_C)$ implies that of $\Spa(C,C^+)$.
	
	The general relative case can now be deduced from this and \Cref{t:PC-in-char-p} following the proof of \cite[Corollary~5.12]{Scholze_p-adicHodgeForRigid}, \cite[Lemma 6.3.7]{Zavyalov_acoh}. We postpone the details to the proof of \Cref{t:PCT-perfectoid} below, where we will give a more general version of this kind of argument.
\end{proof}
\begin{Remark}
	Mann has recently given in \cite[Corollary~3.9.24]{mann2022padic}  a generalisation of the Primitive Comparison Theorem from morphisms of adic spaces over $\Q_p$ to certain ``bdcs'' maps of $p$-bounded locally spatial diamonds, phrased in terms of his 6-functor formalism. While this is very general, it is not clear whether the assumption that  $f^!$ preserves all small colimits applies in the characteristic $p$ setup over $\F_p(\!(t)\!)$, which is the main focus of this article.
\end{Remark}

The main goal of this section is to prove the following improvement of \Cref{t:PCT-rigid} which allows more general topologies:
\begin{Theorem}\label{t:PCT-perfectoid}
	Let $S$ be an analytic adic space over $\Z_p$ with a unit $\varpi\in \O(S)^\times$ that is topologically nilpotent locally on $S$.
	Let $f:X\to S$ be a proper morphism of finite type of adic spaces. Let $\mathbb L$ be an $\F_p$-local system on $X$.   We endow $S$ with one of the following topologies:
	\begin{enumerate}
		\item $\tau=v$, or
		\item $\tau=\qproet$, or
		\item $\tau=\proet$ if $S$ is locally Noetherian, or
		\item $\tau=\et$ if either $S$ is sousperfectoid and $f$ is smooth, or if $S$ is locally Noetherian.
	\end{enumerate}
	Then for any $n\in \N$, the following natural map
	on $S_\tau$ 
	is an almost isomorphism:
	\[ (R^nf_{\tau\ast}\mathbb L)\otimes_{\Z_p} \O^+_S/\varpi\to R^nf_{\tau\ast}(\mathbb L\otimes_{\Z_p} \O^+_X/\varpi).\]
\end{Theorem}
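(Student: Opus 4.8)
The plan is to reduce the statement to the absolute case, which has essentially already been established: combining \Cref{t:PCT-rigid} (the \'etale/Noetherian case over $\Z_p$, which itself already incorporates \Cref{t:PC-in-char-p} at geometric points of characteristic $p$) with the absolute characteristic-$p$ comparison \Cref{t:PC-in-char-p}, we know that for every geometric point $\Spa(C,C^+)\to S$ the map in question is an almost isomorphism on the fibre $X_C$. The content of \Cref{t:PCT-perfectoid} is then to upgrade this pointwise/\'etale statement to the $v$-, $\qproet$- and $\proet$-topologies, and to identify $R^nf_{\tau\ast}$ with $R^nf_{\et\ast}$ after the relevant pullbacks.

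First I would set up the comparison of the higher direct images under change of topology. Let $\lambda\colon S_\tau\to S_\et$ (for $\tau\in\{v,\qproet,\proet\}$) be the natural morphism of sites, and similarly on $X$; by \Cref{p:14.7-14.8} we have $R\Gamma_\tau(U,\mathbb L)=R\Gamma_\et(U,\mathbb L)$ for any $U$, so $R^nf_{\tau\ast}\mathbb L$ is the $\tau$-sheafification of $U\mapsto H^n_\et(X\times_SU,\mathbb L)$, and similarly \Cref{p:c2.15} (applied fibrewise, i.e. to $X\times_SU$ for $U$ ranging over a basis of sousperfectoid/affinoid perfectoid objects) shows that $R^nf_{\tau\ast}(\mathbb L\otimes\O^+_X/\varpi)$ agrees with the $\tau$-sheafification of $U\mapsto H^n_\et(X\times_SU,\mathbb L\otimes\O^+/\varpi)$. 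Thus both sides of the asserted almost isomorphism are $\tau$-sheafifications of presheaves that are built out of \'etale cohomology of the base-changed families $X\times_SU\to U$. Because $\tau$-sheafification is exact and commutes with the tensor products and the $\O^+/\varpi$-module structure involved, it suffices to prove the almost isomorphism at the level of these presheaves, i.e. to show that for $U$ in a basis of $S_\tau$ (affinoid perfectoid for $\tau=v,\qproet$; affinoid for $\tau=\proet$), the map
\[
H^n_\et(X_U,\mathbb L)\otimes_{\Z_p}\O^+(U)/\varpi\longrightarrow H^n_\et(X_U,\mathbb L\otimes\O^+/\varpi)
\]
is an almost isomorphism, where $X_U:=X\times_SU$ and $U$ ranges over perfectoid (resp. Noetherian analytic) adic spaces over $\Z_p$. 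For the $\et$-cases in (4) this is already literally \Cref{t:PCT-rigid} (Noetherian case) or the smooth sousperfectoid case handled there; for (1)--(3) we have replaced $S$ by a perfectoid or Noetherian base, so the key remaining input is the absolute-type statement over a \emph{perfectoid} base $U$.

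Next I would reduce that to \Cref{t:PC-in-char-p} and Scholze's theorem by the now-standard trace argument already used in the proof of \Cref{t:PCT-rigid}: localising further on $U$ one arranges via \cite[0A3R]{StacksProject} a finite \'etale cover $g\colon X'_U\to X_U$ of degree prime to $p$ over which $\mathbb L$ acquires a finite filtration with graded pieces $\F_p$, so $\mathbb L$ is a direct summand of $g_\ast g^\ast\mathbb L$ via the normalised trace and one reduces to $\mathbb L=\F_p$ and then, by \cite[Corollary 2.6.6]{huber2013etale}, to pushing forward $\F_p$ along $f\circ g$; so it is enough to treat $\mathbb L=\F_p$. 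For $\mathbb L=\F_p$ over a perfectoid base $U$ of characteristic $p$ one can, since we are working $v$-locally, pass to the completed perfection and invoke the perfectoid-base case of Scholze's comparison; and the mixed-characteristic perfectoid base decomposes into its open/closed loci where $\varpi$ is invertible (characteristic $0$, Scholze) and where $\varpi=0$ (characteristic $p$, \Cref{t:PC-in-char-p}), one glues these using that the whole construction is compatible with the Zariski-closed/open decomposition. Finally one switches $\varpi$ for $p$ by the Frobenius isomorphism $F^k\colon\O^+/p^{1/p^k}\isomarrow\O^+/p$ and density of $\Z[\tfrac1p]_{\ge0}$ in the value groups, exactly as in \Cref{t:PCT-rigid}.

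The main obstacle I anticipate is not the trace/filtration bookkeeping but the two gluing/limit arguments hidden in ``localising on $U$'' and ``$\tau$-sheafification'': one must check that the almost isomorphism of presheaves, which a priori only holds on a basis and only after possibly passing to a finer cover of $U$, really does sheafify to the asserted statement — i.e. that forming $R^nf_{\tau\ast}$ genuinely commutes with the base-change $U\mapsto X_U$ in the needed sense, and that the almost-mathematics (the ideal $I\subseteq\O^+$ defining ``almost zero'' on $S_\tau$) is compatible with restriction to the basis and with sheafification. This is where I would spend the most care, using \Cref{p:c2.15} to control $\O^+/\varpi$-cohomology under change of topology and the fact that in mixed characteristic the perfectoid base splits along $\{\varpi=0\}$, so that the characteristic-$p$ input \Cref{t:PC-in-char-p} and the characteristic-$0$ input of Scholze are applied on disjoint closed/open pieces and then recombined; checking this recombination is clean is, I expect, the crux.
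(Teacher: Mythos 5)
Your plan correctly identifies the inputs (\Cref{t:PCT-rigid}, \Cref{t:PC-in-char-p}, \Cref{p:c2.15}, \Cref{p:14.7-14.8}, the prime-to-$p$ trace reduction) and correctly locates the tension in the argument, but the way you propose to resolve it does not work, and it diverges from the route the paper actually takes.

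You reduce the claim to a presheaf-level almost isomorphism over a \emph{basis} of affinoid perfectoid objects $U$ of $S_\tau$, and then try to handle a general affinoid perfectoid $U$ over $\Z_p$ by ``decomposing into its open/closed loci where $\varpi$ is invertible (characteristic $0$, Scholze) and where $\varpi=0$ (characteristic $p$, \Cref{t:PC-in-char-p}).''  This step is the gap.  First, $\varpi$ is by hypothesis a \emph{unit} in $\O(S)$, so it is invertible at every point; the locus $\{\varpi=0\}$ is empty and the stated dichotomy is vacuous.  Presumably you mean to decompose by whether $p$ is invertible, but that decomposition is not produced (and would need to be: the paper explicitly points out that bases such as $\Spa(\Z_p\llbracket T\rrbracket\langle T/p\rangle[\tfrac1T])$ have geometric fibres in both characteristics, so one cannot take it for granted that a perfectoid base over $\Z_p$ splits into clopen pieces of fixed characteristic).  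Second, even granting such a decomposition, there is no ``perfectoid-base case of Scholze's comparison'' to invoke.  Scholze's Primitive Comparison is absolute over an algebraically closed non-archimedean \emph{field}; its relative forms in the literature require locally Noetherian bases.  Over an affinoid perfectoid $U$ the sections-level statement
\[
H^n_\et(X_U,\mathbb L)\otimes \O^+(U)/\varpi \to H^n_\et(X_U,\mathbb L\otimes \O^+/\varpi)
\]
is not a known input: it is precisely the kind of thing that \Cref{t:PCT-perfectoid} is designed to establish, and your proposed proof is therefore circular at exactly this point.  There is also a sheafiness issue you brush past: for a non-smooth proper $f$ and perfectoid $U$, the fibre product $X\times_S U$ need not be a sheafy adic space, so \Cref{p:c2.15} does not apply to it directly (this is the reason the paper's remark allows $f$ to be a morphism of locally spatial diamonds rather than adic spaces).

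The paper avoids all of this by never trying to prove a sections-level comparison over a full perfectoid $U$.  It chooses $S'\to S$ in $S_\tau$, lets $F$ be the \'etale sheafification of $U\mapsto H^n_\tau(X'\times_{S'}U,\O^+/\varpi\otimes\mathbb L)$, and checks that the natural map $(R^nf'_{\et\ast}g'^\ast\mathbb L)\otimes\O^+_{S'}/\varpi\to F$ is an isomorphism \emph{on stalks at geometric points} $s:\Spa(C,C^+)\to S'$, using \Cref{l:check-isom-on-stalks}.  The crucial technical ingredient, which your proposal does not have, is \Cref{l:stalk-of-et-sheaf-of-v-cohom}: it identifies that stalk with $H^n_\et(X'\times_{S'}s,\O^+/\varpi\otimes\mathbb L)$ (via the approximation $X'\times_{S'}s\approx\varprojlim_U X'\times_{S'}U$ and \Cref{l:approx-G}).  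Since $(C,C^+)$ is an algebraically closed non-archimedean field, it is of a single, definite characteristic, and $X'\times_{S'}s$ is a genuine proper rigid space over it.  The absolute results \Cref{t:PCT-rigid} and \Cref{t:PC-in-char-p} then apply verbatim, and \Cref{p:14.7-14.8} gives the passage between the $\tau$-topologies.  This is why the paper never needs to decompose the base by characteristic, and why its argument closes where yours leaves a gap.
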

Here we use the notion of almost mathematics on $S_\tau$ explained in \S\ref{s:almost-mathematics}.

In comparison to \Cref{t:PCT-rigid}, this Theorem allows more general $S$ and more general topologies:
For example, for the v-topology, \Cref{t:PCT-perfectoid} is in effect a Primitive Comparison Theorem for the base-change $f':X'\to S'$ of $f$ to any perfectoid space $S'\to S$. This means that we leave the rigid world even if $f$ is a morphism of rigid spaces.

\begin{Remark}
	Regarding the finite type assumption in Theorem \ref{t:PCT-rigid} and \ref{t:PCT-perfectoid}, we note that properness only requires ``${}^+$weakly finite type'', whereas we impose the stronger assumption ``finite type'', see \cite[Definition 1.10.14]{huber2013etale}. This is necessary to ensure that the geometric fibres of $f$ are proper \textit{rigid} spaces. We thank 
	Yicheng Zhou for pointing this out to us.
	
	In fact, for $\tau=v$ or $\tau=\qproet$, we do not need to assume that $X$ and $S$ are represented by adic spaces: Instead, we can allow $f:X\to S$ to be any qcqs morphism of locally spatial diamonds such that for every point $s:\Spa(C,C^+)\to S$ valued in a non-archimedean field, the base-change $X\times_Ss\to \Spa(C,C^+)$ is a proper rigid space. This is useful to avoid problems with sheafiness in the non-Noetherian setup which arise for example when dealing with non-smooth proper   morphisms over perfectoid spaces.
\end{Remark}

For the proof of \Cref{t:PCT-perfectoid}, we use that by \cite[\S2.5]{huber2013etale}, there is a good notion of geometric points for analytic adic spaces. More generally, it is implicit in  \cite[\S11, \S14]{Sch18} that this works for locally spatial diamonds:

\begin{Definition}
	Let $X$ be a locally spatial diamond.
	For any $x_0\in |X|$, let $\mathcal N(X,x_0)$ be the category of pairs $(U,u)$ of a quasi-compact \'etale morphism $U\to X$ with a lift $u\in |U|$ of $x_0$. 
\end{Definition}
\begin{Lemma}
	There is an algebraically closed non-archimedean field $(C,C^+)$ with compatible maps $\Spa(C,C^+)\to U$ such that
	\[ \Spa(C,C^+)=\varprojlim_{U\in \mathcal N(X,x_0)}U\to X\]
	and this map sends the closed point of $\Spa(C,C^+)$ to $x_0$.
\end{Lemma}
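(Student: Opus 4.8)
The plan is to construct the field $(C,C^+)$ as the completion of the filtered colimit of rings of functions along the pro-system $\mathcal N(X,x_0)$, following the standard recipe for geometric points in Huber's theory and its diamond-theoretic upgrade in \cite[\S11, \S14]{Sch18}. First I would observe that $\mathcal N(X,x_0)$ is cofiltered: given two objects $(U_1,u_1)$ and $(U_2,u_2)$, the fibre product $U_1\times_XU_2$ is again quasi-compact \'etale over $X$, and since $u_1,u_2$ lie over the same $x_0\in|X|$ there is a point $u\in|U_1\times_XU_2|$ lifting both (here one uses that $|U_1\times_XU_2|\to|U_1|\times_{|X|}|U_2|$ is surjective for quasi-compact \'etale maps of locally spatial diamonds, cf.\ \cite[Proposition 11.23]{Sch18}); morphisms in $\mathcal N(X,x_0)$ have a dominating object by the same fibre-product argument, so the category is cofiltered and essentially small after passing to a set of isomorphism classes.

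Next I would form the limit $\varprojlim_{U\in\mathcal N(X,x_0)}U$ in locally spatial diamonds; by \cite[Lemma 11.22]{Sch18} (or the analogous compatibility in \cite[\S14]{Sch18}) this limit is a spatial diamond, and since each transition map is quasi-compact \'etale, it is in fact a strictly totally disconnected perfectoid space or, more precisely, a point of the form $\Spa(C,C^+)$ with $C$ a non-archimedean field: the underlying topological space is the inverse limit of the spectral spaces $|U|$, which along the localising system of \'etale neighbourhoods of $x_0$ contracts to a single point. Concretely, set $R^+ = \varinjlim_{(U,u)} \O^+_X(U)$ localised at the prime corresponding to $u$, let $C$ be the completed fraction field of the resulting valuation-type ring and $C^+$ its integral closure pulled back from the $\O^+(U)$; completeness holds because we are taking a completion, and the valuation is non-trivial because $X$ is analytic. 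That the closed point of $\Spa(C,C^+)$ maps to $x_0$ is immediate from the construction, since at each finite stage the chosen point $u$ lies over $x_0$.

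The main thing to verify — and what I expect to be the genuine content rather than bookkeeping — is that $C$ is \emph{algebraically closed}. The standard argument: any finite extension $C'/C$ (equivalently, by \'etale-local structure theory, any connected finite \'etale $C$-algebra) is, by a limit/approximation argument (spreading out a finite \'etale algebra over $\Spa(C,C^+)$ to some finite \'etale $U'\to U$ in the system, cf.\ the usual ``$\pi_1$ of a limit'' formalism as in \cite[0A3R]{StacksProject} adapted to adic spaces / \cite[Proposition 14.9]{Sch18}), already defined over some $(U,u)\in\mathcal N(X,x_0)$, so $U'\to X$ is again quasi-compact \'etale and $U'$ has a point $u'$ over $u$ over $x_0$, hence $(U',u')\in\mathcal N(X,x_0)$ dominates $(U,u)$ in the system; this forces $C' \subseteq C$, i.e.\ $C'=C$. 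One must also check that the residue field is algebraically closed and the value group divisible, which follows from the same spreading-out principle applied to Kummer and Artin--Schreier covers (these are finite \'etale on the analytic locus). I would close by noting that compatible maps $\Spa(C,C^+)\to U$ for each $U$ in the system are the structure maps of the limit, and that the universal property identifies $\Spa(C,C^+)$ with $\varprojlim_U U$ as claimed. The only delicate point is ensuring strict Hensel-type behaviour at $x_0$ — that the colimit of local rings along \'etale neighbourhoods really is strictly Henselian — which is exactly where \cite[\S2.5]{huber2013etale} (for adic spaces) and its diamond analogue are invoked, so I would cite those rather than reprove them.
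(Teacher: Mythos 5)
Your strategy is a more hands-on, constructive alternative to the paper's proof, which instead proceeds entirely by citation: the paper first invokes \cite[Lemma 11.22]{Sch18} to get the limit $Y$ with a distinguished lift $y$ of $x_0$, then \cite[Propositions 11.23, 11.26]{Sch18} to conclude $Y$ is strictly totally disconnected, then the elementary but crucial observation that $Y$ is \emph{connected} (because any open neighbourhood of $y$ in $Y$ is itself an object of $\mathcal N(X,x_0)$, so must be all of $Y$), and finally \cite[Proposition 7.16]{Sch18}, which says precisely that a connected strictly totally disconnected perfectoid space is $\Spa(C,C^+)$ with $C$ algebraically closed. That single proposition subsumes both the identification of $Y$ with a point and the algebraic closedness; your spreading-out argument and your explicit ring construction are doing the same work by hand.

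The genuine gap in your write-up is the passage from ``the limit is strictly totally disconnected'' to ``the limit is of the form $\Spa(C,C^+)$''. You justify this by saying the topological space ``contracts to a single point,'' but this is both imprecise and, read literally, false: $|\Spa(C,C^+)|$ is in general a chain of valuations, not a singleton (only its closed point is distinguished, which is the role $x_0$ plays). What is true, and what the paper uses, is the connectedness argument just described. A strictly totally disconnected space that is not connected would break into several copies of $\Spa(C_i,C_i^+)$, and nothing in your argument rules this out. Similarly, your concrete construction of $R^+$ and its completed fraction field is not linked back to the universal property of $\varprojlim U$ in locally spatial diamonds, so you have not actually shown that your $\Spa(C,C^+)$ \emph{is} the limit. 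Either you should supply the connectedness observation and cite \cite[Proposition 7.16]{Sch18}, or you need to carefully verify that your explicit ring-theoretic construction satisfies the universal property of the limit (which is exactly what Huber does for adic spaces in \cite[\S2.5]{huber2013etale}, but which requires some care to transport to the diamond setting). A smaller point: your spreading-out argument a priori only shows $C$ is separably closed; you should note that $C$ is automatically perfect since $Y$ is a perfectoid space in characteristic $p$ (being a diamond), so separably closed implies algebraically closed — the paper gets this for free from \cite[Proposition 7.16]{Sch18}.
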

\begin{Definition}
	A map $x:\Spa(C,C^+)\to X$ of this form is called a geometric point of $S_\et$. 
\end{Definition}
\begin{proof}
	By \cite[Lemma 11.22]{Sch18}, there exists a limit
	\[Y:=\varprojlim_{U\in \mathcal N(X,x_0)}U\to X\]
	in locally spatial diamonds
	with a distinguished point $y\in |Y|$ lifting $x_0$.
	By \cite[Propositions 11.23, 11.26]{Sch18}, $Y$ is strictly totally disconnected. On the other hand, any open neighbourhood of $y$ in $Y$ has to be the whole space, so $Y$ is also connected. By \cite[Proposition~7.16]{Sch18}, it follows that $Y=\Spa(C,C^+)$ where $(C,C^+)$ is an algebraically closed perfectoid field. 
\end{proof}
\begin{Definition}
	For any sheaf $F$ on $X_\et$, we define the stalk of $F$ at the geometric point $x$ \[ F_x:=\varinjlim_{U\in \mathcal N(X,x_0)} F(U).\]
	This is clearly functorial in $F$.
	Slightly more generally, for any morphism $s:\Spa(L,L^+)\to X$ where $(L,L^+)$ is an algebraically closed non-archimedean field, consider the category $\mathcal N(s)$ of quasi-compact \'etale maps $U\to X$ through which $s$ factors. In the above, this is equivalent to the category $\mathcal N(X,x_0)$ where $x_0$ is the image of the closed point of $\Spa(L,L^+)$ in $|X|$. Then we obtain a factorisation $\Spa(L,L^+)\to \Spa(C,C^+)\xrightarrow{x} X$. We then set $F_s:=F_x=\Gamma(s^{-1}F)$.
\end{Definition}
\begin{Lemma}[{\cite[Proposition 14.3]{Sch18}}]\label{l:check-isom-on-stalks}
	Let $X$ be a locally spatial diamond.
	A morphism $f:F\to G$ of abelian sheaves on $X_\et$ is an isomorphism if and only if the morphism of stalks $f_x:F_x\to G_x$ is an isomorphism at every geometric point $x$ of $X$.
\end{Lemma}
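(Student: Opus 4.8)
The statement to prove is \Cref{l:check-isom-on-stalks}: a morphism of abelian sheaves on $X_\et$ is an isomorphism iff it induces isomorphisms on all geometric stalks. This is cited as \cite[Proposition 14.3]{Sch18}, but let me sketch a proof.

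---

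The plan is to reduce the statement about isomorphisms of sheaves to a statement about exactness of stalk functors, and then verify that the stalk functors jointly detect the zero sheaf. Since a morphism $f\colon F\to G$ of abelian sheaves is an isomorphism iff its kernel and cokernel vanish, and since $(-)_x$ is a filtered colimit, hence exact (it commutes with kernels and cokernels), it suffices to show: \emph{an abelian sheaf $H$ on $X_\et$ is zero iff $H_x=0$ for every geometric point $x$ of $X$.} The ``only if'' direction is trivial, so the content is the converse. Because the property of being zero is local on $X$ and $X$ is locally spatial, we may assume $X$ is spatial, in particular quasi-compact and quasi-separated.

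The key input is the description of geometric points via the strictly totally disconnected spaces $Y=\varprojlim_{U\in\mathcal N(X,x_0)}U$ constructed in the preceding lemma (using \cite[Lemma 11.22, Propositions 11.23, 11.26, 7.16]{Sch18}): each such $Y=\Spa(C,C^+)$ maps to $X$, and $H_x=H(Y)$ (more precisely the colimit $\varinjlim_{U}H(U)$, which by quasi-compactness of the index maps agrees with the sections of the pullback over $Y$). The strategy is then: given a section $s\in H(U)$ for some quasi-compact $\et$ map $U\to X$, we must show $s=0$ after passing to an $\et$ cover of $U$. For each point $u_0\in|U|$ choose a geometric point $x$ lying over it; since $H_x=0$, the image of $s$ in $H_x=\varinjlim_{(V,v)\in\mathcal N(U,u_0)}H(V)$ is zero, so there is some quasi-compact $\et$ map $V\to U$ with $v\mapsto u_0$ such that $s|_V=0$. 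The images of $|V|\to|U|$ over all $u_0$ form an open cover of the spatial (hence spectral, hence quasi-compact) space $|U|$, so finitely many of the $V$'s already cover $U$ in the $\et$-topology; their disjoint union $V'\to U$ is a quasi-compact $\et$ cover with $s|_{V'}=0$. Since $H$ is a sheaf, $s=0$. Hence $H(U)=0$ for all quasi-compact $U$, and as these form a basis of $X_\et$, $H=0$.

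The main obstacle is the bookkeeping around stalks: one must verify that the abstract colimit $H_x=\varinjlim_{(V,v)\in\mathcal N(U,u_0)}H(V)$ computed inside $U_\et$ genuinely agrees with the stalk defined via $\mathcal N(X,x_0)$ — this is exactly the equivalence $\mathcal N(s)\simeq\mathcal N(X,x_0)$ noted in the definition of $F_s$, applied with the \'etale map $U\to X$ — and that vanishing of a germ is detected at some \emph{quasi-compact} \'etale neighbourhood, which is where the filtered cofinality of quasi-compact $\et$ maps in $\mathcal N(X,x_0)$ enters. Once this is in place, the compactness argument on $|U|$ closes everything. Nothing deeper is needed; the result is really a packaging of the fact that $X_\et$ has enough points given by the strictly totally disconnected covers, together with exactness of filtered colimits.
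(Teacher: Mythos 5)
The paper gives no proof of this lemma; it is imported verbatim from \cite[Proposition 14.3]{Sch18}. Your argument is correct and is the standard one behind Scholze's result: reduce to showing that vanishing of all geometric stalks forces a sheaf to vanish, localize to the spatial case, and for a section $s\in H(U)$ over a quasi-compact \'etale $U\to X$ use openness of \'etale maps together with spectrality (hence quasi-compactness) of $|U|$ to extract from the vanishing germs a finite quasi-compact \'etale cover of $U$ on which $s$ restricts to zero, then conclude by the sheaf axiom; the identification $H_x=\varinjlim_{(V,v)\in\mathcal N(U,u_0)}H(V)$ via cofinality of $\mathcal N(U,u_0)$ in $\mathcal N(X,x_0)$ is exactly the bookkeeping point you flag, and it is handled correctly.
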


We can then prove the following diamantine version of \cite[Proposition~2.6.1]{Huber-generalization}:
\begin{Lemma}\label{l:stalk-of-et-sheaf-of-v-cohom}
	Let $S$ be an analytic adic space over $\Z_p$ and let $f:X\to S$ be a qcqs morphism of locally spatial diamonds. Let $\mathbb L$ be an $\F_p$-local system on $X_v$ and
	let $G=\O^+/\varpi\otimes \mathbb L$ or $G= \mathbb L$ on $X_v$. Let $F$ be the \'etale sheafification of the pre-sheaf on $S_\et$ sending $U\mapsto H^n_v(X\times_SU,G)$. Then for any geometric point $s:\Spa(C,C^+)\to S$ of $S_\et$, we have an isomorphism
	\[F_s=H^n_v(X\times_Ss,G).\]
	If $X\times_Ss\to \Spa(C,C^+)$ is a rigid space, this moreover equals $F_s=H^n_\et(X\times_Ss, G)$.
\end{Lemma}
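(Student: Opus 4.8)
The plan is to compute the stalk $F_s$ by commuting the filtered colimit defining the stalk with the $v$-cohomology $H^n_v(-,G)$, using that $X\to S$ is qcqs. First I would recall that stalks of the \'etale sheafification of a presheaf agree with stalks of the presheaf itself, so $F_s = \varinjlim_{U\in \mathcal N(S,s_0)} H^n_v(X\times_S U, G)$, where the colimit runs over quasi-compact \'etale neighbourhoods $U\to S$ of the point $s_0$ underlying $s$. By the previous lemma this category is cofiltered-limited to $\Spa(C,C^+) = \varprojlim_U U$, and one has $X\times_S \Spa(C,C^+) = \varprojlim_U (X\times_S U)$ as a limit of locally spatial diamonds along qcqs transition maps (the base changes of the qcqs \'etale maps $U'\to U$); here the qcqs hypothesis on $f$ guarantees that $X\times_S U$ is itself a spatial (or at least qcqs, locally spatial) diamond, so the limit is well-behaved.

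The key step is then a continuity statement for $v$-cohomology along such cofiltered limits: if $Y = \varprojlim_i Y_i$ is a limit of qcqs locally spatial diamonds along qcqs transition maps, and $G$ is pulled back from some $Y_{i_0}$, then $H^n_v(Y,G) = \varinjlim_i H^n_v(Y_i,G)$. I would invoke this for both $G = \mathbb L$ and $G = \O^+/\varpi\otimes\mathbb L$ — the latter makes sense because $\O^+/\varpi$ on the $v$-site is compatible with pullback along the transition maps, being the restriction of the structure sheaf. This kind of result is standard in Scholze's framework (it is the diamantine analogue of Huber's Lemma~2.6.1 for adic spaces, cited in the statement, and follows from the corresponding statement for $v$-sheaf cohomology of qcqs diamonds via the fact that $R\Gamma_v$ commutes with cofiltered limits along qcqs maps — one reduces to affinoid perfectoid test objects, where $v$-cohomology is computed by a \v{C}ech-to-derived-functor argument that visibly commutes with filtered colimits of rings). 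Applying it gives $F_s = H^n_v(X\times_S \Spa(C,C^+), G) = H^n_v(X\times_S s, G)$, which is the first claim.

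For the final sentence, suppose $X\times_S s \to \Spa(C,C^+)$ is a proper rigid space over the algebraically closed non-archimedean field $(C,C^+)$. Then I would apply \Cref{p:14.7-14.8} to conclude $H^n_v(X\times_S s, \mathbb L) = H^n_\et(X\times_S s,\mathbb L)$ in the case $G=\mathbb L$, since $\mathbb L$ is an \'etale sheaf and $\mu^*\nu^*$ identifies its \'etale and $v$-cohomology. In the case $G = \O^+/\varpi\otimes\mathbb L$, I would instead invoke \Cref{p:c2.15}, which applies because $X\times_S s$ is a rigid space with topologically nilpotent unit $\varpi$ (pulled back from $S$), giving $H^n_v(X\times_S s,\O^+/\varpi) = H^n_\et(X\times_S s,\O^+/\varpi)$; tensoring with the local system $\mathbb L$ (which is trivialised on a finite \'etale cover, so one reduces to the trivial case by a \v{C}ech argument) gives $H^n_v(X\times_S s, \O^+/\varpi\otimes\mathbb L) = H^n_\et(X\times_S s,\O^+/\varpi\otimes\mathbb L)$.

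The main obstacle I anticipate is verifying the continuity of $v$-cohomology along the cofiltered limit $X\times_S\Spa(C,C^+) = \varprojlim_U X\times_S U$ with the precise generality needed — namely for locally spatial (not necessarily spatial) diamonds and a possibly non-constant coefficient sheaf like $\O^+/\varpi\otimes\mathbb L$. One must be careful that the transition maps are qcqs (which follows from $f$ being qcqs and $U'\to U$ being quasi-compact \'etale) and that each $X\times_S U$ is qcqs enough for cohomology to commute with the limit; covering $S$ by quasi-compact opens and using that $f$ is qcqs reduces to the spatial case, where the statement is part of the standard toolkit (e.g.\ via \cite[\S11, \S14]{Sch18} and the almost-acyclicity of $\O^+$ on affinoid perfectoids used repeatedly above). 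Everything else is a bookkeeping matter of matching up the colimit defining the stalk with the limit computing the fibre.
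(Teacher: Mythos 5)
Your proposal follows essentially the same route as the paper: compute the stalk as a filtered colimit of $v$-cohomologies, identify $X\times_S s = \varprojlim_U X\times_S U$, reduce to affinoid perfectoids via a \v{C}ech argument, and invoke a continuity statement for the cohomology along the tower. For $G=\mathbb L$ this is [Sch18, Prop.\ 14.9], and for the final sentence the split between \Cref{p:14.7-14.8} (for $\mathbb L$) and \Cref{p:c2.15} (for $\O^+/\varpi\otimes\mathbb L$) is exactly right.

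One point deserves sharpening, and it is the one you yourself flag as the ``main obstacle'': the blanket claim that ``$R\Gamma_v$ commutes with cofiltered limits along qcqs maps'' is \emph{not} available for the coefficient $\O^+/\varpi$ as a black box. The sheaf $\O^+/\varpi$ is not pulled back from a fixed stage of the tower; what one needs is a density statement, namely that $\varinjlim_U \O(X\times_SU)$ has dense image in $\O(X\times_Ss)$. In the paper this is encoded in the notion of an $\approx$-limit (\Cref{d:approx-limit}), verified by [Sch18, Prop.\ 6.5] because once you have reduced to the affinoid perfectoid situation the fibre products $X\times_SU$ are themselves affinoid perfectoid, and then the continuity for $\O^+/\varpi$ is \Cref{l:approx-G} (a special case of \cite[Lemma 2.16]{heuer-G-torsors-perfectoid-spaces}). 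Your instinct to ``reduce to affinoid perfectoid test objects, where the \v{C}ech argument visibly commutes with colimits of rings'' is precisely where the density condition does hold and the argument succeeds, so the gap is one of sourcing rather than of strategy — but it should be said explicitly that the affinoid-perfectoid reduction is load-bearing here, not merely a convenience: for a general cofiltered limit of qcqs diamonds with $\O^+/\varpi$-coefficients, the continuity can fail.
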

\begin{proof}
	The statement is local on $S$, so we may assume that $S$ is affinoid. Then the assumptions guarantee that $X$ is spatial, so by \cite[Proposition 11.24]{Sch18} we can find a strictly totally disconnected  quasi-pro-\'etale perfectoid cover $\wt X\to X$. It is clear from the definition that any such map is automatically quasi-compact and separated, hence it follows from \cite[Lemma 7.19]{Sch18} that any space in the \v{C}ech nerve is still affinoid perfectoid.
	
	It is clear from the definition that we have
	\begin{equation}\label{eq:stalk-of-vsheaf-lemma}
		F_s=\clim_{U\in \mathcal N(s)} H^n_v(X\times_SU,G).
	\end{equation} 
	We can compute the right hand side using \v{C}ech cohomology of the cover $\wt X\to X$. As the same is true for $H^n_v(X\times_Ss,G)$, we may therefore reduce to the case that $X$ is perfectoid. Moreover, it suffices to consider maps $U\to S$ which are standard-\'etale, as these are cofinal in the system of all \'etale maps. Then the fibre products $X\times_SU$ are again affinoid perfectoid. Since we have $s=\plim U$, it is clear that in diamonds, we have
	\[ X\times_Ss=\plim_U X\times_SU.\]
	The case of $G=\mathbb L$ is now immediate from \cite[Proposition~14.9]{Sch18}. 
	
	For the case of $G=\O^+/\varpi\otimes \mathbb L$, we use the following definition and lemma:
	\begin{Definition}\label{d:approx-limit}
		Let $Z$ be an affinoid adic space over $K$ and let $(Z_i)_{i\in I}$ be a cofiltered inverse system of affinoid adic spaces over $K$ with compatible morphisms $Z\to Z_i$. We write
		\[ Z\approx \varprojlim_{i\in I} Z_i\]
		if the induced map $|Z|\to \varprojlim_i |Z_i|$ of the underlying topological spaces is a homeomorphism and the induced map $\varinjlim_i \O(Z_i)\to \O(Z)$ has dense image. 
	\end{Definition}
	\begin{Lemma}\label{l:approx-G}
		Assume that $Z\approx \varprojlim_{i\in I} Z_i$ are as in \Cref{d:approx-limit} and that $Z$ and all $Z_i$ are moreover affinoid perfectoid spaces. Let $\mathbb L$ be an $\F_p$-local system on $Z_{i,\et}$ for some $i\in I$. We also denote by $\mathbb L$  the pullbacks to $Z$ and $Z_j$ for $j\geq i$. Then for any $n\in \N$,
		\[H^n_\et(Z,\O^+/\varpi\otimes \mathbb L)=\varinjlim_{j\geq i}  H^n_\et(Z_j,\O^+/\varpi\otimes \mathbb L).\]
	\end{Lemma}
	\begin{proof}
		This is a special case of  \cite[Lemma~2.16]{heuer-G-torsors-perfectoid-spaces}. We summarise the argument for the reader's convenience: For any $i\in I$ and any étale morphism $U_i\to Z_i$ that is either finite étale or a rational open immersion, one easily verifies that  $Z\times_{Z_i}U_i\approx \varprojlim_{j\geq i} Z_j\times_{Z_i}U_i$ (see e.g.\ \cite[Lemma~3.13]{heuer-diamantine-Picard}). By a \cH-argument, we can therefore reduce to the case of $\mathbb L=\F_p$. For this we have \[H^0_\et(Z,\O^+)/\varpi=\varinjlim_{i\in I}  H^0_\et(Z_i,\O^+)/\varpi,\] this can easily be seen directly from the definition of $\approx$ (use that $f\in \O(Z)$ lies in $\O^+(Z)$ if and only if $|f(z)|\leq 1$ for each $z\in |Z|$). We refer to  \cite[Lemma~3.10]{heuer-diamantine-Picard} for details.
		
		Since $Z_{\etqcqs}=2\text{-}\varinjlim_{i} Z_{i,\etqcqs}$ by \cite[Proposition 6.4]{Sch18}, the case $n=0$ of the lemma follows by sheafification. The case of general $n$ follows by considering \cH-cohomology.
	\end{proof}
	
	As $X\times_SU$ is affinoid perfectoid, we have
	by \cite[Proposition 6.5]{Sch18}  that
	\[X\times_Ss\approx \plim_U X\times_SU.\]
	By \Cref{l:approx-G} and \Cref{p:c2.15}, this implies that
	\[H^n_v(X\times_Ss,\O^+/\varpi\otimes \mathbb L)=\clim_U H^n_v(X\times_SU,\O^+/\varpi\otimes \mathbb L).\]
	The right hand side equals $F_s$ by \eqref{eq:stalk-of-vsheaf-lemma}. This gives the desired isomorphism. Finally, 
	when $X\times_Ss$ is a rigid space,
	we have $H^n_\et(X\times_Ss, G)=H^n_v(X\times_Ss,G)$ by \Cref{p:c2.15}.
\end{proof}

\begin{proof}[Proof of \Cref{t:PCT-perfectoid}]
	Let $S'\to S$ be any morphism in $S_\tau$ where $S'$ is an adic space and form the base-change diagram
	\[
	\begin{tikzcd}
		X' \arrow[d, "g'"] \arrow[r, "f'"] & S' \arrow[d, "g"] \\
		X \arrow[r, "f"]                           & S.               
	\end{tikzcd}\]
	Let $F$ be the \'etale sheafification of the presheaf on $S'_\et$ defined by
	\begin{equation}\label{eq:sheaf-in-proof-relPCT}
	U\to H^n_\tau(X'\times_{S'}U,\O^+_{X'}/\varpi\otimes g'^\ast\mathbb L).
	\end{equation}
	We note that by \Cref{p:c2.15}, the given assumptions on $S$ in each case ensure that
	\[H^n_\tau(X'\times_{S'}U,\O^+_{X'}/\varpi\otimes g'^\ast\mathbb L)= H^n_v(X'\times_{S'}U,\O^+_{X'}/\varpi\otimes g'^\ast\mathbb L).\]
	We claim that the natural morphism of sheaves on $S'_\et$ \[\varphi:(R^nf'_{\et\ast}g'^{\ast}\mathbb L)\otimes \O^+_{S'}/\varpi\to F\]
	is an isomorphism. Due to \Cref{p:14.7-14.8}, 
	this implies the Theorem after a further sheafification on $S$ with respect to the $\tau$-topology. 
	
	To prove the claim, it suffices by \Cref{l:check-isom-on-stalks} to see that its stalk $\varphi_s$ at any geometric point $s:\Spa(C,C^+)\to S'$ is an isomorphism. By \Cref{l:stalk-of-et-sheaf-of-v-cohom}, this stalk identifies with
	\[ \varphi_s: H^n_\et(X'\times_{S'}s, \mathbb L)\otimes C^+/\varpi\to  H^n_\et(X'\times_{S'}s,\O^+_{X'}/\varpi\otimes \mathbb L).\]
	But $X'\times_{S'}s\to s$ is a rigid space over $\Spa(C,C^+)$. The map $\varphi_s$ is therefore an isomorphism by \Cref{t:PCT-rigid} when $\Char(K)=0$, and by \Cref{t:PC-in-char-p} when it is $\Char(K)=p$.
\end{proof}
\begin{Remark}
	If $X\to S$ is a morphism of rigid spaces,  one can alternatively prove the result by rigid approximation: Let $S'\to S$ be any morphism from an affinoid perfectoid space and let $S'\approx \varprojlim S_i$ be a rigid approximation over $S$. Then for any $n$, one can show that the map
	\[ \textstyle\clim_{i}H^n_{v}(X\times_SS_i,\O^+/\varpi)\to H^n_{v}(X\times_S{S'},\O^+/\varpi)\]
	is an isomorphism. Note that $X\times_SS_i$ is a rigid space. In particular, this shows that $H^n_{\et}(X\times_SS',\O^+/\varpi)=H^n_{v}(X\times_SS',\O^+/\varpi)$ (which is not a priori clear  -- recall that we consider $X\times_SS'$ as a diamond which need not come from a sheafy adic space). Consequently, with more work, the sheaf $F$ from \Cref{eq:sheaf-in-proof-relPCT} could in fact be identified with $R^nf_{\et\ast}( \O^+_{X'}/\varpi\otimes g'^\ast\mathbb L)$.
\end{Remark}

\section{Proper Base Change for locally constant sheaves}
As one of the many applications of  the Primitive Comparison Theorem, it is known that one can use it to prove Proper Base Change results for \'etale cohomology of adic spaces over $\Q_p$: This strategy can be found in the work of Bhatt--Hansen \cite[Theorem~3.15]{BhattHansen_Zar_constructible}, and it is also implicit in the work of Mann, who proves a very general Proper Base Change result for $\O^+/\varpi$-modules \cite[Theorem~1.2.4.(iv)]{mann2022padic}.

As an application of \Cref{t:PC-in-char-p},
we now explain an instance of this argument that works for analytic adic spaces over $\Z_p$,  giving new proper base change results in this generality.

Throughout this section, we let $(K,K^+)$ be any non-archimedean field over $\Z_p$ with pseudo-uniformiser $\varpi$.

\begin{Theorem}[Proper Base Change]\label{p:proper-base-change}
	Let $f:X\to S$ be a proper morphism of finite type of analytic adic spaces over $\Z_p$. Let $\mathbb L$ be an \'etale-locally constant torsion abelian sheaf on $X_\et$. Let $g:S'\to S$ be any morphism of adic spaces and form the base-change diagram in locally spatial diamonds:
	\[
	\begin{tikzcd}
		X' \arrow[d, "g'"] \arrow[r, "f'"] & S' \arrow[d, "g"] \\
		X \arrow[r, "f"]                           & S              
	\end{tikzcd}\]
	Then for any $n\in \N$, the following natural map is an isomorphism:
	\begin{equation}\label{eq:proper-base-change-map}
		g^{\ast}R^nf_{\et\ast}\mathbb L\to R^nf'_{\et\ast}g'^{\ast}\mathbb L.
	\end{equation}
\end{Theorem}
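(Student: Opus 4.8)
The plan is to follow the strategy of Bhatt--Hansen and Mann: reduce the statement to the Primitive Comparison Theorem by first disposing of the prime-to-$p$ part via Huber's work, then handling the $p$-power torsion case using \Cref{t:PCT-perfectoid}. First I would reduce to the case where $\mathbb{L}$ is an $\F_\ell$-local system for a single prime $\ell$: decompose $\mathbb{L}$ into its $\ell$-primary parts, and within each part use the filtration by $\ell$-torsion subsheaves to reduce to $\ell$-torsion local systems (this is the same dévissage as in the proof of \Cref{t:finiteness}). For $\ell \neq p$, the statement is Huber's proper base change \cite[Theorem~4.4.1]{huber2013etale} (after noting, as in the proof of \Cref{t:finiteness}, that one may pass to $K^+ = K^\circ$ by \cite[Proposition~8.2.3(ii)]{huber2013etale}). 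So the content is the case $\mathbb{L}$ an $\F_p$-local system.

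For the $p$-power case, I would first reduce to checking \eqref{eq:proper-base-change-map} on stalks at geometric points of $S'_\et$: by \Cref{l:check-isom-on-stalks} it suffices to see that for every geometric point $s:\Spa(C,C^+)\to S'$ the induced map on stalks is an isomorphism. Using that $R^nf'_{\et\ast}g'^\ast\mathbb{L}$ is the étale sheafification of $U \mapsto H^n_\et(X'\times_{S'}U, g'^\ast\mathbb{L})$, and the analogue of \Cref{l:stalk-of-et-sheaf-of-v-cohom} for the constant coefficient sheaf $\mathbb{L}$ (which is the easy case of that lemma, relying on \cite[Proposition~14.9]{Sch18}), the stalk of the right-hand side at $s$ is $H^n_\et(X'\times_{S'}s, \mathbb{L}) = H^n_v(X'\times_{S'}s,\mathbb{L})$, and the fibre $X'\times_{S'}s \to \Spa(C,C^+)$ is a proper rigid space since $f$ is of finite type. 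The left-hand side has stalk $(R^nf_{\et\ast}\mathbb{L})_{g(s)}$, which by the same reasoning applied to $f$ is $H^n_\et(X\times_S g(s),\mathbb{L})$ where $g(s)$ is the geometric point of $S$ underlying $s$. Since $X'\times_{S'}s = X\times_S g(s)$ (both identify with the base change of $f$ along the geometric point $\Spa(C,C^+)\to S$), the two stalks agree and the base-change map is the identity — so one is reduced to the statement that the formation of $H^n_\et(X_{\bar y},\mathbb{L})$ for the geometric fibre $X_{\bar y}$ is compatible with the passage from an arbitrary étale geometric point to the actual geometric fibre, which is precisely the content bundled into the stalk computation.

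The step where the Primitive Comparison Theorem genuinely enters is in showing that the stalk computation above is legitimate, i.e. that the étale-sheafified presheaf $U\mapsto H^n_\et(X'\times_{S'}U,\mathbb{L})$ really has the expected stalks; this is where I expect the main obstacle to lie. The subtlety is that, in the non-Noetherian generality we are working in, $X'\times_{S'}U$ need not be a sheafy adic space, so one must work with v-cohomology throughout and invoke \Cref{p:c2.15} and the approximation results \Cref{l:approx-G}, \Cref{l:stalk-of-et-sheaf-of-v-cohom} to control how v-cohomology of $\F_p$-local systems behaves under cofiltered limits of perfectoid spaces — exactly as in the proof of \Cref{t:PCT-perfectoid}. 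Concretely, I would run the argument of that proof verbatim but with $\O^+_{X'}/\varpi\otimes g'^\ast\mathbb{L}$ replaced by $g'^\ast\mathbb{L}$; the only place where the Primitive Comparison Theorem (\Cref{t:PCT-perfectoid}, or directly \Cref{t:PC-in-char-p} in the characteristic $p$ fibres and \Cref{t:PCT-rigid} in the characteristic $0$ fibres) is needed is to deduce finiteness and the correct behaviour of the étale cohomology of the geometric fibres — in particular that $H^n_\et(X_{\bar y},\mathbb{L})$ is a finite $\F_p$-module (\Cref{c:H^iXF-char-p-finite}, \Cref{t:finiteness}), which is what makes the limit formula $H^n_v(X\times_S s,\mathbb{L}) = \varinjlim_U H^n_v(X\times_S U,\mathbb{L})$ hold and makes the final comparison an honest isomorphism rather than merely an almost isomorphism. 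Once this is in place, the last sheafification from $S'_\et$ down to $S_\tau$ is harmless by \Cref{p:14.7-14.8}, and assembling the degrees gives the full statement $g^\ast Rf_{\et\ast}\mathbb{L}\to Rf'_{\et\ast}g'^\ast\mathbb{L}$ of \Cref{t:proper-BC-intro}.
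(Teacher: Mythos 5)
Your reduction to stalks at geometric points is the right first step, but it discards the real content. After reducing to a geometric point $s':\Spa(L,L^+)\to S'$ lying over a geometric point $s:\Spa(C,C^+)\to S$, the stalk of $g^\ast R^nf_{\et\ast}\mathbb L$ at $s'$ is the fibre cohomology $H^n_\et(X_C,\mathbb L)$ for $X_C:=X\times_S\Spa(C,C^+)$, whereas the stalk of $R^nf'_{\et\ast}g'^\ast\mathbb L$ at $s'$ is $H^n_\et(X_L,\mathbb L)$ for $X_L:=X\times_S\Spa(L,L^+)$. Since $(C,C^+)\hookrightarrow(L,L^+)$ may be a strict extension of algebraically closed non-archimedean fields, these are fibre cohomologies over \emph{different} geometric points and do not coincide for trivial reasons. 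Your claim that ``$X'\times_{S'}s = X\times_S g(s)$ \dots\ so the two stalks agree and the base-change map is the identity'' conflates the composite $g\circ s':\Spa(L,L^+)\to S$ with the geometric point of $S$ through which it factors; the follow-up sentence about ``the passage from an arbitrary \'etale geometric point to the actual geometric fibre'' is exactly the gap, and it is not absorbed into the stalk lemma --- it is the substance of the theorem.

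The Primitive Comparison Theorem enters to fill precisely this gap, not, as you suggest, to furnish finiteness for a limit argument. The paper's mechanism is: because an $\F_p$-vector space $W$ vanishes if and only if $W\otimes_{\F_p}L^+/\varpi$ is almost zero, it suffices to show that $H^n_\et(X_C,\mathbb L)\otimes L^+/\varpi\to H^n_\et(X_L,\mathbb L)\otimes L^+/\varpi$ is an almost isomorphism. The Primitive Comparison Theorem (\Cref{t:PCT-rigid} in characteristic $0$, \Cref{t:PC-in-char-p} in characteristic $p$) converts both sides into $\mathbb L\otimes\O^+/\varpi$-cohomology, and then \Cref{l:pct-stalks-O^+/p} --- an explicit K\"unneth-type base change for $\O^+/\varpi$-coefficients under a product with the affinoid perfectoid space $\Spa(L,L^+)$ --- supplies the required almost isomorphism. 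So PCT is used to translate the $\F_p$-base-change question into an $\O^+/\varpi$-base-change question, which is then settled by a direct computation; the finiteness and limit considerations you emphasize are not the driving mechanism. A further minor point: for the prime-to-$p$ case the paper invokes \cite[Corollary~16.10]{Sch18} rather than Huber's theorem, because after base change along $g$ the space $X'$ is in general only a locally spatial diamond, not an adic space.
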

\begin{Remark}\label{r:Huber-proper-base-change}
	Before discussing the proof, let us detail the relation of \Cref{p:proper-base-change} to previous instances of non-archimedean Proper Base Change results in the literature:
	\begin{enumerate}
		\item The case  of \Cref{p:proper-base-change} of $\ell$-torsion coefficients for $\ell\neq p$ follows from work of Huber, as does the case when $g$ has dimension $0$ \cite[Theorem~4.4.1]{huber2013etale}. In general, the case of $p$-torsion coefficients was left open in \cite{huber2013etale}.
		\item Another closely related result, due to Scholze, is \cite[Corollary 16.10]{Sch18}, which gives the result when $f$ is any qcqs morphism of locally spatial diamonds and $\mathbb L$ is prime-to-$p$-torsion. It also gives the $p$-torsion case when $g$ is quasi-pro-\'etale.
		\item The case when $f$ and $g$ are morphisms of rigid spaces in characteristic $0$ is due to Bhatt--Hansen \cite[Theorem~3.15]{BhattHansen_Zar_constructible}, who more generally work with Zariski-constructible sheaves. On the other hand, we work with more general adic spaces: Indeed, over $\Q_p$, we are particularly interested in the case that $S'$ is perfectoid, which we use in \cite{HX} for applications to $p$-adic moduli spaces in non-abelian Hodge theory.
		\item In characteristic $0$, when $f$ is additionally smooth, \Cref{p:proper-base-change} also follows from a result of Mann \cite[Theorem~1.2.4.(iv)]{mann2022padic}. Mann's Theorem is a priori a base change for $\O^+/p$-modules. Under the additional assumptions, one can translate this to a Proper Base Change  for $\F_p$-modules, cf \cite[Theorem 3.9.23, Corollary 3.9.24]{mann2022padic}.
		\item In characteristic $p$, our proof of \Cref{p:proper-base-change} has some conceptual similarities with \mbox{Bhatt--Lurie}'s proof of Proper Base Change for $p$-torsion sheaves in the algebraic setting of schemes over $\F_p$, using their Riemann--Hilbert correspondence \cite[\S10.6]
		{BhattLurie}.
	\end{enumerate}
\end{Remark}
Like Bhatt--Hansen and Mann, we will use Primitive Comparison to deduce \Cref{p:proper-base-change} from a base-change statement for $\O^+/\varpi$-modules. In  our setup, the latter is given by the following observation:
Let $(C,C^+)$ be an algebraically closed complete extension of $(K,K^+)$.

\begin{Lemma}[{\cite[Proposition 4.2]{heuer-diamantine-Picard}}]\label{l:pct-stalks-O^+/p}
	Let $X$ be any proper rigid space over $(C,C^+)$. Let $\mathbb L$ be any $\F_p$-local system on $X$. Let $Y$ be any affinoid perfectoid space over $(C,C^+)$. Then
	\[H^n_\et(X\times Y,\mathbb L\otimes \O^+/\varpi)\aeq H^n_\et(X,\mathbb L\otimes \O^+/\varpi)\otimes \O^+/\varpi(Y).\]
\end{Lemma}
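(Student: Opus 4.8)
The plan is to compute the left-hand side by pushing forward along the constant family $f\colon X\times Y\to Y$, applying the v-topological Primitive Comparison Theorem to $f$, and then matching the result with the right-hand side via the absolute Primitive Comparison Theorem over $(C,C^+)$. We may regard $X\times Y$ as a sousperfectoid adic space (by \cite{HK}) or simply as a locally spatial diamond; in either case $H^n_\et(X\times Y,-)=H^n_v(X\times Y,-)$ for the sheaves at hand by \Cref{p:c2.15}, so it is enough to work with $H^n_v$. Now $f$ is a proper morphism of finite type (its geometric fibres over the geometric points $\Spa(C',C'^+)\to Y$ are the proper rigid spaces $X_{C'}$) onto the analytic adic space $Y$ over $\Z_p$ with topologically nilpotent unit $\varpi$, so \Cref{t:PCT-perfectoid} with $\tau=v$ applies and yields, for every $j$, an almost isomorphism of sheaves on $Y_v$
\[(R^jf_{v\ast}\mathbb L)\otimes_{\Z_p}\O^+_Y/\varpi\aeq R^jf_{v\ast}(\mathbb L\otimes_{\Z_p}\O^+_{X\times Y}/\varpi).\]

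The key point is to identify $R^jf_{v\ast}\mathbb L$ with the constant sheaf $\underline{V^j}$ on $Y_v$, where $V^j:=H^j_\et(X,\mathbb L)$ is a finite-dimensional $\F_p$-vector space (by \Cref{c:H^iXF-char-p-finite} if $\Char C=p$, and by Scholze's finiteness theorem if $\Char C=0$). The natural candidate isomorphism is the map $\underline{V^j}\to R^jf_{v\ast}\mathbb L$ obtained by sheafifying the pullback maps $H^j_v(X,\mathbb L)\to H^j_v(X\times_C U,\mathbb L)$ along the projections $X\times_C U\to X$ for $U\to Y$ \'etale. To see that it is an isomorphism one may pull back along a strictly totally disconnected quasi-pro-\'etale cover $\widetilde Y\to Y$ (\cite[Proposition~11.24]{Sch18}), along which $R^jf_{v\ast}\mathbb L$ commutes with base change, and then check on each connected component $\Spa(C',C'^+)$ of $\widetilde Y$, on which v-local systems are constant; there \Cref{l:stalk-of-et-sheaf-of-v-cohom} identifies the relevant value with $H^j_v(X_{C'},\mathbb L)=H^j_\et(X_{C'},\mathbb L)$, and $H^j_\et(X_{C'},\mathbb L)=H^j_\et(X,\mathbb L)=V^j$ by the invariance of \'etale cohomology of a proper rigid space under extension of the algebraically closed non-archimedean base field. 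I expect this identification of $R^jf_{v\ast}\mathbb L$ to be the main obstacle: it cannot be deduced from the general Proper Base Change Theorem, since the proof of the latter (\Cref{p:proper-base-change}) will itself rely on the present lemma, so it has to be obtained directly from the constancy of the family together with the cited base-field invariance.

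Granting this, $R^jf_{v\ast}(\mathbb L\otimes\O^+/\varpi)\aeq\underline{V^j}\otimes_{\Z_p}\O^+_Y/\varpi\cong(\O^+_Y/\varpi')^{\oplus d_j}$ with $d_j=\dim_{\F_p}V^j$, where $\varpi'$ is a pseudo-uniformiser with $(\varpi')=(\varpi,p)$; for clarity we henceforth assume $p\in(\varpi)$, which is harmless (cf.\ the proof of \Cref{t:PCT-rigid}). Since $Y$ is affinoid perfectoid, $R\Gamma_v(Y,\O^+_Y/\varpi)\aeq\O^+(Y)/\varpi$ is concentrated in degree $0$ by almost acyclicity of $\O^+$ on affinoid perfectoid spaces (and \Cref{p:c2.15}), so the Leray spectral sequence for $f$ degenerates in the almost category and gives
\[H^n_v(X\times Y,\mathbb L\otimes\O^+/\varpi)\aeq V^n\otimes_{\F_p}\O^+(Y)/\varpi.\]
Finally I would match the right-hand side of the lemma with this expression: the absolute Primitive Comparison Theorem over $(C,C^+)$ — namely \Cref{t:PC-in-char-p}(1) if $\Char C=p$, and the case $S=\Spa(C,C^+)$ of \Cref{t:PCT-rigid} if $\Char C=0$ — gives $H^n_\et(X,\mathbb L\otimes\O^+/\varpi)\aeq V^n\otimes_{\F_p}C^+/\varpi$, whence $H^n_\et(X,\mathbb L\otimes\O^+/\varpi)\otimes_{C^+/\varpi}\O^+(Y)/\varpi\aeq V^n\otimes_{\F_p}\O^+(Y)/\varpi$. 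It then remains to check that the natural map of the lemma — pullback along $X\times Y\to X$ composed with the $\O^+(Y)/\varpi$-module structure — induces these identifications, which is routine since every isomorphism above is natural in $X$ and compatible with the structure sheaves.
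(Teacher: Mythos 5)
The paper's proof is a direct Čech computation that does not go through the relative Primitive Comparison Theorem at all: choose an affinoid perfectoid quasi-pro-étale cover $\wt X\to X$ trivialising $\mathbb L$ whose Čech nerve consists of affinoid perfectoid spaces; observe that $\wt X\times Y\to X\times Y$ inherits the same properties, that $\O^+(\wt X_i\times Y)/\varpi\aeq\O^+(\wt X_i)/\varpi\otimes\O^+(Y)/\varpi$, and that $\O^+(Y)/\varpi$ is flat over $\O_C/\varpi$; then almost acyclicity of $\O^+$ on affinoid perfectoid spaces gives
\[H^n_v(X\times Y,\mathbb L\otimes\O^+/\varpi)\aeq\check H^n(\wt X\to X,\mathbb L\otimes\O^+/\varpi)\otimes\O^+(Y)/\varpi\aeq H^n_\et(X,\mathbb L\otimes\O^+/\varpi)\otimes\O^+(Y)/\varpi.\]
Your route — apply \Cref{t:PCT-perfectoid} to the constant family $f:X\times Y\to Y$, identify $R^jf_{v\ast}\mathbb L$ with the constant sheaf $\underline{V^j}$, and then run the Leray spectral sequence — is genuinely different and considerably longer than this.

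The genuine gap is exactly where you suspect it: the constancy of $R^jf_{v\ast}\mathbb L$. You correctly observe that you cannot invoke \Cref{p:proper-base-change}, since that is proved later using the present lemma. But you then reduce the constancy to ``the invariance of \'etale cohomology of a proper rigid space under extension of the algebraically closed non-archimedean base field'' without proof or citation, and this invariance for $p$-torsion coefficients is not available at this point either. Indeed, the lemma you are trying to prove, specialised to $Y=\Spa(C',\O_{C'})$ and combined with the absolute Primitive Comparison Theorem over $C$ and $C'$, is precisely what \emph{yields} that invariance — so as written your argument is circular. To make it non-circular you would have to establish the invariance independently: in characteristic $0$ this is contained in Scholze's Theorem 5.1, but in characteristic $p$ you would need a separate argument, e.g.\ that $H^j(X_{C'},\mathbb L\otimes\O)=H^j(X,\mathbb L\otimes\O)\otimes_C C'$ by flat base change for coherent cohomology, and that the decomposition of a finite $\phi$-module over an algebraically closed field into $\phi$-nilpotent and $\phi$-bijective parts (as in \Cref{l:colim_phi-M-almost-finite-free}) commutes with such an extension, whence $H^j_\et(X_{C'},\mathbb L)\cong H^j_\et(X,\mathbb L)$ via \Cref{t:PC-in-char-p}. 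None of this appears in your proposal, and it is a non-trivial detour. The paper's direct Čech argument avoids the issue entirely, which is precisely why the lemma is placed \emph{before} Proper Base Change rather than after.
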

\begin{proof}
	This is a slight generalisation of a statement in the proof of \cite[Lemma 3.25]{BhattHansen_Zar_constructible}, as well as of \cite[Proposition 4.2]{heuer-diamantine-Picard}. Either proof goes through in this setting, as we now explain: Since $X$ is quasi-compact and separated, we can find a quasi-pro\'etale cover $\wt X\to X$ by an affinoid perfectoid space such that all spaces $\wt X_i$ in the \v{C}ech-nerve are still affinoid perfectoid. It follows that also $\wt X\times Y\to X\times Y$ is a cover with this property. We may assume that $\mathbb L$ becomes trivial on $\wt X$. Then by almost acyclicity, the cohomology of $\mathbb L\otimes \O^+/\varpi$ is almost acyclic on each $\wt X_i$ and $\wt X_i\times Y$. 
	Via \Cref{p:c2.15}, we can therefore compute:
	\begin{align*}
		H^n_\et(X\times Y,\mathbb L\otimes \O^+/\varpi)&=H^n_v(X\times Y,\mathbb L\otimes \O^+/\varpi)\aeq\check{H}^n(\wt X\times Y\to X\times Y,\mathbb L\otimes \O^+/\varpi)
		\\&\aeq\check{H}^n(\wt X\to X,\mathbb L\otimes \O^+/\varpi)\otimes \O^+/\varpi(Y)
		\\&\aeq H^n_\et(X,\mathbb L\otimes \O^+/\varpi)\otimes \O^+/\varpi(Y)
	\end{align*}
	where in the second line we have used that $\O^+(\wt X_i\times Y)/\varpi\aeq\O^+(\wt X_i)/\varpi\otimes \O^+(Y)/\varpi$ and that $\O^+(Y)/\varpi$ is flat over $\O_K/\varpi$.
\end{proof}
\begin{proof}[Proof of \Cref{p:proper-base-change}]
	The sheaf $\mathbb L$ is the direct sum of its $\ell$-power torsion subsheaves for all primes $\ell$. By \Cref{r:Huber-proper-base-change}.2, it remains to treat $p$-power torsion sheaves. 
	By considering $p$-torsion subsheaves, $\mathbb L$ is the successive extension of \'etale-locally free $\F_p$-modules, so we can reduce to this case.
	By \Cref{l:check-isom-on-stalks}, it suffices to prove that \eqref{eq:proper-base-change-map} is an isomorphism on stalks at geometric points $s'=\Spa(L,L^+)\to S'$ for algebraically closed non-archimedean fields $(L,L^+)$ over $(K,K^+)$. By \Cref{l:stalk-of-et-sheaf-of-v-cohom}, it thus suffices to prove that for $s:=g\circ s'$, the map
	\[ H^n_\et(X_s,\mathbb L)\to H^n_\et(X'_{s'},\mathbb L)\]
	is an almost isomorphism, where $X_s:=X\times_Ss$ and $X'_{s'}:=X'\times_{S'}s'$.  We have thus reduced to the case that $S=\Spa(C,C^+)$ and $S'=\Spa(L,L^+)$ are both geometric points.
	
	As an $\F_p$-vector space $W$ is trivial if and only if $W\otimes_{\F_p} L^+/\varpi$ is almost trivial, it suffices to see that the map
	\[ H^n_\et(X_s,\mathbb L)\otimes L^+/\varpi\to H^n_\et(X'_{s'},\mathbb L)\otimes L^+/\varpi\]
	obtained by tensoring with $L^+/\varpi$ is an almost isomorphism.
	Via the Primitive Comparison  \Cref{t:PCT-rigid}, this map gets identified in the almost category with the map
	\[H^n_\et(X_s,\mathbb L\otimes \O^+/\varpi)\otimes_{C^+/\varpi}L^+/\varpi\to H^n_\et(X'_{s'},\mathbb L\otimes \O^+/\varpi)\]
	This is an almost isomorphism by \Cref{l:pct-stalks-O^+/p}.
\end{proof}

We obtain the following analogue of \cite[Corollary~3.17(ii)]{Scholze_p-adicHodgeForRigid} for $S_\qproet$ and $S_v$:
\begin{Corollary}\label{t:PPBC-perfectoid}
	Let $f:X\to S$ be a proper morphism of finite type of analytic adic spaces over $\Z_p$.  Let $\mathbb L$ be an \'etale-locally constant torsion abelian sheaf on $X_\et$. Let $\tau\in \{\proet,\qproet,v\}$. Then for any $n\in \N$, the natural base-change map 
	\[\lambda^{\ast}R^nf_{\et\ast}\mathbb L\isomarrow R^nf_{\tau\ast}\lambda^{\ast}\mathbb L\]
	for the morphism $\lambda:S_\tau\to S_\et$ is an isomorphism
\end{Corollary}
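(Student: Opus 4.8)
The plan is to deduce \Cref{t:PPBC-perfectoid} from the \'etale Proper Base Change Theorem~\ref{p:proper-base-change} together with the comparison of topologies in \Cref{p:14.7-14.8}. Write $\lambda\colon S_\tau\to S_\et$ for the morphism of sites; there is a natural base-change map $\lambda^\ast R^nf_{\et\ast}\mathbb L\to R^nf_{\tau\ast}\lambda^\ast\mathbb L$ of $\tau$-sheaves on $S$. First I would reduce to $\mathbb L$ an $\F_p$-local system: decompose $\mathbb L$ into its $\ell$-primary parts, for $\ell\neq p$ the statement is \cite[Corollary~16.10]{Sch18}, and for $\ell=p$ the sheaf $\mathbb L$ is a successive extension of $\F_p$-local systems, so the five lemma applies since both sides are exact in $\mathbb L$. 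The three topologies $\tau\in\{\proet,\qproet,v\}$ can then be treated uniformly.

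Because $\mathbb L$ is an \emph{\'etale} sheaf on $X$, \Cref{p:14.7-14.8} gives $H^m_\tau(Y,\lambda^\ast\mathbb L)=H^m_\et(Y,\mathbb L)$ for all $m$ and every locally spatial diamond $Y$ over $X$, and likewise $R\lambda_{X\ast}\lambda_X^\ast\mathbb L\cong\mathbb L$. The essential claim is that $R^qf_{\tau\ast}\lambda^\ast\mathbb L$ lies in the essential image of the fully faithful functor $\lambda^\ast$, for every $q$. Granting this for all $q$, the composite-functor spectral sequence $R^p\lambda_{S\ast}R^qf_{\tau\ast}\lambda^\ast\mathbb L\Rightarrow R^{p+q}(f_\et\circ\lambda_X)_\ast\lambda^\ast\mathbb L$ has $R^p\lambda_{S\ast}(\lambda^\ast(-))=0$ for $p>0$ by \Cref{p:14.7-14.8}, hence degenerates; since its abutment is $R^{p+q}f_{\et\ast}(R\lambda_{X\ast}\lambda_X^\ast\mathbb L)=R^{p+q}f_{\et\ast}\mathbb L$, we obtain $\lambda_{S\ast}R^nf_{\tau\ast}\lambda^\ast\mathbb L\cong R^nf_{\et\ast}\mathbb L$. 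As $R^nf_{\tau\ast}\lambda^\ast\mathbb L=\lambda^\ast\lambda_{S\ast}R^nf_{\tau\ast}\lambda^\ast\mathbb L$ by the claim, this identifies $R^nf_{\tau\ast}\lambda^\ast\mathbb L\cong\lambda^\ast R^nf_{\et\ast}\mathbb L$, and one checks that the resulting isomorphism is inverse to the base-change map.

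The main obstacle is the claim that $R^qf_{\tau\ast}\lambda^\ast\mathbb L$ is pulled back from $S_\et$. This is exactly where properness of $f$ and \'etale-local constancy of $\mathbb L$ enter: for a non-proper $f$, or for non-locally-constant coefficients such as $\O^+_{S_v}/\varpi$, the analogous $v$-sheaf need \emph{not} be pulled back from $S_\et$ when $S$ is not perfectoid --- precisely the phenomenon flagged after \Cref{p:c2.15}. I would prove the claim by evaluating $R^qf_{\tau\ast}\lambda^\ast\mathbb L$ on a basis of $\tau$-weakly contractible objects $g\colon T\to S$ --- strictly totally disconnected perfectoid spaces for $\tau\in\{\qproet,v\}$, and $w$-contractible objects for $\tau=\proet$ with $S$ locally Noetherian (cf.\ \cite{bhatt-scholze-proetale}). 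Over such $T$ the Leray spectral sequence for $X_T:=X\times_ST\to T$ should degenerate, so that $(R^qf_{\tau\ast}\lambda^\ast\mathbb L)(T)\cong H^q_\tau(X_T,\lambda^\ast\mathbb L)\cong H^q_\et(X_T,\mathbb L)\cong(g^\ast R^qf_{\et\ast}\mathbb L)(T)=(\lambda^\ast R^qf_{\et\ast}\mathbb L)(T)$, the third isomorphism by \Cref{p:14.7-14.8} and the fourth by \Cref{p:proper-base-change}, all compatibly with the base-change map. The one point requiring care beyond the setup of the excerpt is this degeneration over contractible $T$, i.e.\ controlling the higher $\tau$-cohomology of $X_T\to T$; this amounts to the fact that proper pushforward of an \'etale sheaf along a qcqs morphism of locally spatial diamonds is again \'etale, which can be imported from \cite{Sch18}. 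One could equivalently phrase the argument via stalks at geometric points using \Cref{l:stalk-of-et-sheaf-of-v-cohom} and \Cref{l:check-isom-on-stalks}, but the delicate input --- that the $\tau$-pushforward is \'etale --- is the same.
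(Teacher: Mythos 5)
Your plan has the right ingredients --- \Cref{p:14.7-14.8} and the \'etale Proper Base Change \Cref{p:proper-base-change} --- but the route is more circuitous than necessary and, more importantly, the key step has a gap for $\tau=v$.

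First, two inefficiencies. The reduction to $\F_p$-local systems is unnecessary: \Cref{p:proper-base-change} is already stated for \'etale-locally constant \emph{torsion} sheaves, so you can invoke it directly. More seriously, the composite-functor spectral sequence is redundant: your proposed proof of the claim (evaluating $R^q f_{\tau\ast}\lambda^\ast\mathbb L$ on a basis and showing it agrees with $\lambda^\ast R^q f_{\et\ast}\mathbb L$) would, if it worked, \emph{already} establish the corollary directly, with no need to first prove ``pulled back from \'etale'' abstractly and then deduce the identification from the Grothendieck spectral sequence. The structure introduces a detour through $\lambda_{S\ast}$ that is simply not needed.

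The genuine gap is the degeneration of the Leray spectral sequence over a weakly contractible $T$, i.e.\ the vanishing of $H^p_\tau(T,-)$ for $p>0$. For $\tau=\qproet$ this is fine: every quasi-pro-\'etale cover of a strictly totally disconnected $T$ splits, so the site is cohomologically trivial. But for $\tau=v$, a strictly totally disconnected perfectoid space is \emph{not} v-weakly contractible --- v-covers of $T$ need not split. What is true is that $H^p_v(T,G)=0$ for $G$ pulled back from $T_\qproet$ (that is \cite[Proposition~14.8]{Sch18}). Since the sheaf $R^qf_{T\tau\ast}\lambda^\ast\mathbb L$ is not \emph{a priori} known to be pulled back from $T_\qproet$ --- that is essentially what you are trying to prove --- the argument is circular. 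Your fallback, that ``proper pushforward of an \'etale sheaf along a qcqs morphism of locally spatial diamonds is again \'etale, which can be imported from \cite{Sch18}'', does not hold for $p$-torsion coefficients: \cite{Sch18} establishes this only away from $p$ (and for quasi-pro-\'etale base change), and extending it to $p$-torsion is precisely the content of the corollary and of \Cref{p:proper-base-change}. So that citation cannot bail out the argument.

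The paper's own proof sidesteps all of this. It observes that perfectoid $S'\to S$ form a basis of $S_\tau$; on such $S'$ one has $H^n_\tau(X\times_SS',\lambda^\ast\mathbb L)=H^n_\et(X\times_SS',\mathbb L)$ by \Cref{p:14.7-14.8}; the \'etale sheafification over $S'_\et$ of $U\mapsto H^n_\et(X\times_SU,\mathbb L)$ is $g^\ast R^nf_{\et\ast}\mathbb L$ by \Cref{p:proper-base-change}; and $\tau$-sheafifying over $S_\tau$ now produces $\lambda^\ast R^nf_{\et\ast}\mathbb L$. No claim that the pushforward is pulled back from \'etale is needed as an input --- it comes out as an output. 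In particular, your closing remark that ``the delicate input --- that the $\tau$-pushforward is \'etale --- is the same'' if one phrases things via stalks is not accurate: the paper's stalk-free sheafification argument never requires this as a hypothesis.
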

Here, as before, when dealing with $\tau=\proet$, we always tacitly assume that $X$ is locally Noetherian, as this is assumed in the definition of $X_\proet$ in \cite[\S3]{Scholze_p-adicHodgeForRigid}.
\begin{proof}
	Let $g:S'\to S$ be a perfectoid space in $S_\tau$.
	By \Cref{p:14.7-14.8}, we have
	\[H^n_\tau(X\times_SS',\mathbb L)=H^n_\et(X\times_SS',\mathbb L).\]
	By \Cref{p:proper-base-change}, the \'etale sheafification of this over $S'_\et$ is  $g^{\ast}R^nf_{\et\ast}\mathbb L$. Upon $\tau$-sheafification, this shows the desired result.
\end{proof}
\begin{Theorem}[Künneth formula]\label{c:Kunneth}
	Let $f:X\to S$ and $g:Y\to S$ be morphisms of analytic adic spaces over $\Z_p$ such that $f$ is proper of finite type and $g$ is qcqs. Form the base-change diagram:
	\[\begin{tikzcd}
		X\times_SY \arrow[rd, "h"] \arrow[r, "f'"] \arrow[d, "g'"] & Y \arrow[d, "g"] \\
		X \arrow[r, "f"]                                           & S               
	\end{tikzcd}\]
	
	Let $n\in \N$ and let $\mathbb L$ be an  \'etale-locally constant $n$-torsion abelian sheaf on $X_\et$. Then for any $\Z/n$-module $E$ on $Y_\et$, we have a natural isomorphism
	\[ Rh_{\et\ast}(g'^\ast\mathbb L\otimes f'^\ast E)=Rf_{\et\ast}\mathbb L\otimes Rg_{\et\ast}E.\qedhere\]
\end{Theorem}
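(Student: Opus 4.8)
The plan is to deduce the Künneth formula from the Proper Base Change Theorem \Cref{p:proper-base-change}, together with the projection formula for proper morphisms, following Berkovich's argument for prime-to-$p$ coefficients \cite[Corollary~7.7.3]{Berkovich_EtaleCohom}; the point is that \Cref{p:proper-base-change} now makes the required base-change statement available for $p$-torsion coefficients as well. We work in locally spatial diamonds, write $\otimes$ for the derived tensor product over $\Z/n$, and regard all higher direct images as objects of the relevant derived categories. Since both sides of the claimed identity are exact (triangulated) functors of $\mathbb L$, and the filtration of $\mathbb L$ by the subsheaves $p^i\mathbb L$ has subquotients that are $\F_p$-local systems, we may reduce to the case that $\mathbb L$ is an $\F_p$-local system.

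The computation is the following chain of natural isomorphisms. Since $h=g\circ f'$, we have $Rh_{\et\ast}=Rg_{\et\ast}\circ Rf'_{\et\ast}$. As $f'$ is a base change of $f$, it is proper of finite type, hence its geometric fibres are proper rigid spaces; the projection formula for the proper morphism $f'$ therefore gives
\[ Rf'_{\et\ast}(g'^\ast\mathbb L\otimes f'^\ast E)\cong (Rf'_{\et\ast}g'^\ast\mathbb L)\otimes E. \]
One checks this on stalks at geometric points $\bar y$ of $Y$: by \Cref{p:proper-base-change} applied to $f'$, both sides are computed by the fibre $X'_{\bar y}$, and one is left with the assertion that $R\Gamma(Z,\mathcal F)\otimes_{\Z/n}M\to R\Gamma(Z,\mathcal F\otimes\underline M)$ is an isomorphism for any proper rigid space $Z$ over an algebraically closed field and any $M\in D(\Z/n)$ — and since $R\Gamma(Z,-)$ has finite cohomological dimension by \Cref{t:finiteness} (resp.\ \Cref{c:H^iXF-char-p-finite} in characteristic $p$), both sides are triangulated functors of $M$ commuting with arbitrary direct sums, so the comparison map is an isomorphism for all $M$ once it is for $M=\Z/n$, which is trivial. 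Next, \Cref{p:proper-base-change} for $f$ itself gives $Rf'_{\et\ast}g'^\ast\mathbb L\cong g^\ast Rf_{\et\ast}\mathbb L$. Combining these steps:
\[ Rh_{\et\ast}(g'^\ast\mathbb L\otimes f'^\ast E)\cong Rg_{\et\ast}\bigl(g^\ast Rf_{\et\ast}\mathbb L\otimes E\bigr). \]

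To conclude, I apply the projection formula $Rg_{\et\ast}(g^\ast A\otimes E)\cong A\otimes Rg_{\et\ast}E$ with $A=Rf_{\et\ast}\mathbb L$. Because $g$ is merely qcqs and not proper, this requires $A$ to be dualisable (a perfect complex); granting it, the formula follows from $g^\ast A\otimes(-)\cong\underline{\mathrm{RHom}}(g^\ast A^\vee,-)$ together with the adjunction isomorphism $Rg_{\et\ast}\underline{\mathrm{RHom}}(g^\ast C,D)\cong\underline{\mathrm{RHom}}(C,Rg_{\et\ast}D)$. Dualisability of $Rf_{\et\ast}\mathbb L$ is where I expect the only real work to lie: proper base change identifies its stalks with $R\Gamma(X_{\bar s},\mathbb L)$, which is bounded with finite cohomology by \Cref{t:finiteness}, and — $\mathbb L$ being an $\F_p$-local system — this suffices to see that $Rf_{\et\ast}\mathbb L$ is a perfect complex of $\F_p$-sheaves; the passage from $\F_p$- to $\Z/n$-coefficients in this last projection formula is then a routine change of rings. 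Everything else in the argument is formal or an immediate consequence of the results established above, so the main obstacle is the projection formula over the non-proper morphism $g$, i.e.\ the finiteness (dualisability) of $Rf_{\et\ast}\mathbb L$.
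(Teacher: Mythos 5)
Your argument follows the same Berkovich-style chain of isomorphisms the paper uses (first projection formula for the proper map $f'$, then proper base change for $f$, then projection formula for $g$), so the overall strategy is correct. However there is one genuine gap and one unnecessary detour worth flagging.

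The gap is in your justification of the final projection formula for $g$. You apply the non-proper projection formula \cite[0944]{StacksProject}, which requires $Rf_{\et\ast}\mathbb L$ to be a perfect (dualisable) object in $D(S_\et,\Z/n)$, and you argue for this by noting that its stalks $R\Gamma(X_{\bar s},\mathbb L)$ are bounded with finite cohomology by \Cref{t:finiteness}. But perfectness of a complex of \'etale sheaves is \emph{not} a stalkwise condition — a complex all of whose stalks are perfect over $\F_p$ need not itself be perfect (e.g.\ $j_!\F_p$ for $j$ an open immersion has finite stalks but is not dualisable). To make your step rigorous over a general base you would need a constructibility theorem for $Rf_{\et\ast}\mathbb L$ in this generality, which the paper does not establish and which would be substantial extra work. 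The paper avoids this entirely by first reducing to $S=\Spa(C,C^+)$ a geometric point, using \Cref{l:check-isom-on-stalks} and \Cref{l:stalk-of-et-sheaf-of-v-cohom}, \emph{before} running the chain of isomorphisms. Over a geometric point, $Rf_{\et\ast}\mathbb L=R\Gamma(X,\mathbb L)$ is simply a bounded complex of $\Z/n$-modules with finite cohomology (by \Cref{t:finiteness}), and any such complex is automatically perfect over the Noetherian ring $\Z/n$. That reduction is the key to the paper's proof, and you should do it first.

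The detour: your preliminary reduction to $\mathbb L$ an $\F_p$-local system via the filtration by $p^i\mathbb L$ is both unnecessary and slightly wrong as stated. Since $\mathbb L$ is $n$-torsion for a general $n$, the subquotients of that filtration are $\Z/p$-modules only if $\mathbb L$ is $p$-power torsion; you would first need to split off the prime-to-$p$ part. But this is moot, because \Cref{t:finiteness} and \Cref{p:proper-base-change} already apply to arbitrary \'etale-locally constant torsion sheaves, so no such reduction is needed — the paper works with $\Z/n$-coefficients throughout. Similarly, your stalkwise reproof of the proper projection formula for $f'$ is more work than simply citing \cite[Proposition 5.5.1]{huber2013etale}, as the paper does.

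In short: reduce to $S$ a geometric point first (so perfectness of $Rf_{\et\ast}\mathbb L$ is automatic), drop the reduction to $\F_p$-coefficients, and cite Huber's projection formula for the proper map; then your argument coincides with the paper's.
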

This extends the case of prime-to-$p$-torsion due to Berkovich  \cite[Corollary~7.7.3]{Berkovich_EtaleCohom}.
\begin{proof}
	The cup product defines a natural morphism from right to left. To see that this is a quasi-isomorphism, it suffices by \Cref{l:check-isom-on-stalks} to check this on stalks at any geometric point $x:\Spa(C,C^+)\to S$. By \Cref{l:stalk-of-et-sheaf-of-v-cohom}, this reduces us to the case that $S=\Spa(C,C^+)$.
	
	 With our preparations, we can now essentially follow the proof of \cite[Corollary~7.7.3]{Berkovich_EtaleCohom}: We first write 
	\begin{align*}
	 Rh_{\et\ast}(g'^\ast\mathbb L\otimes f'^\ast E)&= Rg_{\et\ast}Rf'_{\et\ast}(g'^\ast\mathbb L\otimes f'^\ast E).\\
	 \intertext{By \cite[
	Proposition 5.5.1]{huber2013etale}, we have $Rf'_{\et\ast}(g'^\ast\mathbb L\otimes f'^\ast E)=E\otimes Rf'_{\et\ast}g'^\ast\mathbb L$, so this is}
	&=Rg_{\et\ast}(E\otimes Rf'_{\et\ast}g'^\ast\mathbb L).\\
	\intertext{By \Cref{p:proper-base-change}, we have $Rf'_{\et\ast} g'^\ast\mathbb L=g^\ast Rf_{\et\ast}\mathbb L$, so this is}
	&=Rg_{\et\ast}(E\otimes g^\ast Rf_{\et\ast}\mathbb L).\\
	\intertext{We now use that by \Cref{t:finiteness}, the complex of $\Z/n$-modules $Rf_{\et\ast}\mathbb L$ is perfect. We can therefore invoke the projection  formula \cite[0944]{StacksProject} to see that the above term is}
	&=Rf_{\et\ast}\mathbb L\otimes Rg_{\et\ast}E. \qedhere
	\end{align*}
\end{proof}
\begin{Corollary}
	Let $C$ be an algebraically closed non-archimedean field.
	Let $X$ be a proper rigid space over $C$ and let $Y$ any qcqs analytic adic spaces over $C$. Then for any $n\in \N$,
	\[ R\Gamma_\et(X\times Y,\Z/n\Z)= R\Gamma_\et(X,\Z/n\Z)\otimes R\Gamma_\et(Y,\Z/n\Z).\]
\end{Corollary}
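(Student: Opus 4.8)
The plan is to derive this as the special case $S=\Spa(C,C^+)$ of the relative K\"unneth formula of Theorem~\ref{c:Kunneth}. First I would apply that theorem to the structure morphisms $f\colon X\to S$ and $g\colon Y\to S$, taking $\mathbb L$ to be the constant sheaf $\Z/n\Z$ on $X$ and $E$ the constant sheaf $\Z/n\Z$ on $Y$. The hypotheses are met: $f$ is proper of finite type because $X$ is a proper rigid space (rigid spaces are locally of topologically finite type, hence of finite type), and $g$ is qcqs because $Y$ is a qcqs analytic adic space. Theorem~\ref{c:Kunneth} then provides a natural isomorphism
\[ Rh_{\et\ast}(g'^\ast\Z/n\Z\otimes f'^\ast\Z/n\Z)=Rf_{\et\ast}\Z/n\Z\otimes Rg_{\et\ast}\Z/n\Z \]
of complexes on $S_\et$, where $h\colon X\times Y\to S$ denotes the structure morphism of $X\times Y=X\times_SY$, and $g'\colon X\times Y\to X$, $f'\colon X\times Y\to Y$ are the two projections.

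It then remains only to unwind this over the point $S$. On the left, $g'^\ast\Z/n\Z$ and $f'^\ast\Z/n\Z$ are both the constant sheaf $\Z/n\Z$ on $X\times Y$, and $\Z/n\Z\otimes_{\Z/n\Z}\Z/n\Z=\Z/n\Z$. To make the three pushforwards compute global cohomology I would first reduce to the case $C^+=\O_C$: by Huber's invariance of \'etale cohomology with torsion coefficients under enlarging the ring of integral elements \cite[Proposition~8.2.3(ii)]{huber2013etale} (the same input used in Lemma~\ref{l:reduce-PCT-(C-Cp)-to-(C-OC)}.1), the \'etale cohomology of $X$, of $Y$ and of $X\times Y$ with $\Z/n\Z$-coefficients is unchanged upon base change along $\Spa(C,\O_C)\hookrightarrow\Spa(C,C^+)$, so we may assume $S=\Spa(C,\O_C)$. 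Since $C$ is algebraically closed, $\Spa(C,\O_C)$ is a point with trivial \'etale topos, so pushforward along the structure morphism of any adic space over $S$ is just $R\Gamma_\et(-)$. Applying this to $h$, $f$ and $g$ identifies the displayed isomorphism with
\[ R\Gamma_\et(X\times Y,\Z/n\Z)=R\Gamma_\et(X,\Z/n\Z)\otimes R\Gamma_\et(Y,\Z/n\Z), \]
which is the claim.

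I do not expect any genuine obstacle here: all the mathematical content is already in Theorem~\ref{c:Kunneth} (which itself rests on Proper Base Change, Theorem~\ref{p:proper-base-change}, and on the finiteness in Theorem~\ref{t:finiteness}, needed there so that $Rf_{\et\ast}\mathbb L$ is a perfect complex and the projection formula applies). The only things demanding a line of verification are the finite type hypothesis on $f$ and the qcqs hypothesis on $g$, the harmless reduction to $C^+=\O_C$, and the standard fact that over a geometric point the \'etale pushforward is global \'etale cohomology; none of these is serious.
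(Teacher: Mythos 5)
Your proof is correct and is exactly the intended argument: the corollary is stated without proof in the paper precisely because it is the immediate specialization of Theorem~\ref{c:Kunneth} to $S=\Spa(C,C^+)$ with $\mathbb L=E=\Z/n\Z$. Your reduction to $C^+=\O_C$ via \cite[Proposition~8.2.3(ii)]{huber2013etale} and the observation that pushforward to a geometric point is $R\Gamma_\et$ are both the right things to say to unwind the statement.
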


Finally, we note that in algebraic setups, \Cref{p:proper-base-change} generalises to constructible sheaves. For simplicity, let us restrict attention to the case where $S'$ is stably uniform.

\begin{Proposition}\label{t:PBC-perfectoid}
	Let $f:X\to S$ be an algebraisable proper morphism of rigid spaces over $(K,K^+)$. Let $\mathbb L$ be any algebraic Zariski-constructible torsion abelian sheaf on $X_\et$. Let $g:S'\to S$ be a morphism of adic spaces where $S'$ is stably uniform and form the base-change diagram in locally spatial diamonds:
		\[
	\begin{tikzcd}
		X' \arrow[d, "g'"] \arrow[r, "f'"] & S' \arrow[d, "g"] \\
		X \arrow[r, "f"]                           & S              
	\end{tikzcd}\]
	Then for any $n\in \N$, the following natural base change maps
	are isomorphisms:
	\begin{align*}
		1)&&g^{\ast}R^nf_{\et\ast}\mathbb L&\isomarrow R^nf'_{\et\ast}g'^{\ast}\mathbb L,&&\\
		2)&&\lambda^\ast R^nf_{\et\ast}\mathbb L&\isomarrow R^nf_{v\ast}\lambda^\ast \mathbb L&&
	\end{align*}
	where $\lambda:X_v\to X_\et$ is the natural morphism.
\end{Proposition}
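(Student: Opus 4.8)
The plan is to combine the reduction to geometric-point bases used in the proof of \Cref{p:proper-base-change} with the algebraicity hypothesis, which lets us replace the delicate $p$-torsion analytic input by the classical Proper Base Change Theorem for schemes.

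For part 1, decomposing $\mathbb L$ into its $\ell$-primary parts we may treat each prime $\ell$ separately; for $\ell\neq p$ the statement is Huber's Proper Base Change for prime-to-$p$ coefficients \cite[Theorem~4.4.1]{huber2013etale}, which also covers constructible sheaves, so we may assume that $\mathbb L$ is $p$-power torsion, and by filtering by $p$-torsion subsheaves that $\mathbb L$ is a constructible $\F_p$-sheaf. By \Cref{l:check-isom-on-stalks} it suffices to show that the base change map of part 1 becomes an isomorphism on stalks at every geometric point $s':\Spa(L,L^+)\to S'$. Here the proof of \Cref{l:stalk-of-et-sheaf-of-v-cohom} (in the case $G=\mathbb L$) applies verbatim with any constructible torsion sheaf in place of a local system: the only inputs are \Cref{p:14.7-14.8} and the continuity of \'etale cohomology along cofiltered limits of spatial diamonds with qcqs transition maps \cite[Proposition~14.9]{Sch18}, both valid for torsion coefficients. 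Since $f$ has finite type, the fibres $X_{s'}:=X\times_{S'}s'$ and $X_{\bar s}:=X\times_S\Spa(C,C^+)$, for the geometric point $\bar s:\Spa(C,C^+)\to S$ underlying $g\circ s'$, are proper rigid spaces, so this extended lemma identifies the stalk at $s'$ of $R^nf'_{\et\ast}g'^\ast\mathbb L$ with $H^n_\et(X_{s'},\mathbb L)$ and the stalk at $s'$ of $g^\ast R^nf_{\et\ast}\mathbb L$ with $H^n_\et(X_{\bar s},\mathbb L)$. We are thereby reduced to showing that for every inclusion $(C,C^+)\hookrightarrow(L,L^+)$ of algebraically closed non-archimedean fields, the pullback
\[ H^n_\et(X_{\bar s},\mathbb L)\longrightarrow H^n_\et\bigl(X_{\bar s}\times_{\Spa(C,C^+)}\Spa(L,L^+),\mathbb L\bigr)\]
is an isomorphism.

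At this point the algebraicity enters: the geometric fibre $X_{\bar s}$ is the analytification of a proper $C$-scheme $\mathfrak X$, and $\mathbb L=\mathcal L^{\an}$ for a constructible $\F_p$-sheaf $\mathcal L$ on $\mathfrak X_\et$. By \Cref{l:reduce-PCT-(C-Cp)-to-(C-OC)} (part 1) we may assume $C^+=\O_C$ and $L^+=\O_L$, so the target of the displayed map is the analytification over $L$ of the proper scheme $\mathfrak X_L:=\mathfrak X\times_CL$, with the pulled-back sheaf $\mathcal L_L$. Huber's comparison theorem between algebraic and analytic \'etale cohomology for proper schemes \cite{huber2013etale} then identifies the displayed map, compatibly, with the algebraic base change map $H^n_\et(\mathfrak X,\mathcal L)\to H^n_\et(\mathfrak X_L,\mathcal L_L)$ along $\Spec L\to\Spec C$, which is an isomorphism by the Proper Base Change Theorem in \'etale cohomology \cite{StacksProject}, valid for all torsion coefficients. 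This proves part 1. Part 2 then follows from part 1 just as \Cref{t:PPBC-perfectoid} follows from \Cref{p:proper-base-change}: for a perfectoid, hence stably uniform, cover $S'\to S$ in $S_v$ one has $H^n_v(X\times_SS',\mathbb L)=H^n_\et(X\times_SS',\mathbb L)$ by \Cref{p:14.7-14.8}, part 1 identifies the \'etale sheafification of $U\mapsto H^n_v(X\times_SU,\mathbb L)$ over $S'_\et$ with $g^\ast R^nf_{\et\ast}\mathbb L$, and $v$-sheafification over $S$ gives the claim.

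The step requiring the most care is the reduction in the second paragraph: one must confirm that \Cref{l:stalk-of-et-sheaf-of-v-cohom} and \cite[Proposition~14.9]{Sch18} genuinely apply to constructible — not merely locally constant — torsion sheaves, and that Huber's comparison theorem is available for $p$-torsion coefficients over fields of residue characteristic $p$. For $\F_p$-coefficients on a proper scheme, the latter can be verified directly from rigid-analytic GAGA for coherent cohomology together with the Frobenius-equivariant Artin--Schreier sequence, exactly as in the proof of \Cref{t:PC-in-char-p}, which reduces it to the comparison for $\mathcal O$.
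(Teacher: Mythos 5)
Your proof is correct, but it takes a genuinely different route from the paper's. The paper reduces $S'$ to a rigid space via the rigid approximation result of \cite[Proposition~3.17]{heuer-diamantine-Picard} (using that $S'$ is stably uniform), further reduces to the algebraic projection $g:\pi: S\times\mathbb A^m\to S$, then algebraizes the whole base-change square via \cite[Theorem~3.7.2]{huber2013etale} and invokes classical Proper Base Change for schemes. You instead reduce to geometric points of $S'_\et$ via \Cref{l:check-isom-on-stalks}, extend \Cref{l:stalk-of-et-sheaf-of-v-cohom} to constructible torsion coefficients to identify the stalks of both sides with \'etale cohomology of proper rigid fibres, algebraize only the fibre $X_{\bar s}$, and invoke classical Proper Base Change for schemes over an extension $\Spec L\to\Spec C$ of algebraically closed fields. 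Your route has the advantage of being more self-contained: it avoids the external rigid approximation input and instead reuses the stalk machinery already developed for \Cref{p:proper-base-change}. The price is that you must verify the extension of \Cref{l:stalk-of-et-sheaf-of-v-cohom} from $\F_p$-local systems to constructible torsion sheaves, which you assert but do not carry out; the extension is plausible (the case $G=\mathbb L$ in that proof uses only the \v{C}ech complex of a strictly totally disconnected quasi-pro-\'etale cover, acyclicity of torsion abelian sheaves on strictly totally disconnected spaces, \Cref{p:14.7-14.8}, and \cite[Proposition~14.9]{Sch18}, none of which needs local constancy), but it should be spelled out. Your closing paragraph on re-deriving Huber's comparison theorem from GAGA plus Artin--Schreier is not needed: \cite[Theorem~3.7.2]{huber2013etale}, which the paper's proof also uses, already gives the comparison for constructible torsion sheaves under proper pushforward without a prime-to-$p$ hypothesis, and appealing to it directly is cleaner than reproving it.
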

\begin{proof}
	The statement is local on $S'$, so we may assume that $S'$ and $S$ are affinoid. We can moreover assume that $S$ is reduced as passing to the underlying reduced space does not change either \'etale or v-sites. Recall that any reduced rigid space is stably uniform. By \cite[Proposition 3.17]{heuer-diamantine-Picard}, we can therefore find a  ``rigid approximation'', namely a cofiltered inverse system $(S_i\to S)_{i\in I}$ of smooth morphisms from affinoid rigid spaces such that 
	\[S'\approx \varprojlim_{i\in I} S_i.\] 
	In particular, we then have $X'=\varprojlim_{i\in I} X\times_SS_i$ in locally spatial diamonds.
	This reduces us to the case where $S'$ is rigid and there is $m$ such that $g$ factors into an \'etale map $S'\to S\times \mathbb A^m$ composed with the projection $\pi:S\times \mathbb A^m\to S$. We can therefore reduce to the case that $S'=S\times \mathbb A^m$ and $g:S'\to S$ is algebraic. Let $f^{\alg}:X^{\alg}\to S^{\alg}$ be the morphism of schemes for which $f^{\alg,\an}=f$, and similarly for $\pi$. By \cite[Theorem~3.7.2]{huber2013etale} and the preceding discussion, we have
	\[\pi^{\ast}R^nf_{\et\ast}\mathbb L=\pi^{\ast}(R^nf^\mathrm{alg}_{\et\ast}\mathbb L^\mathrm{alg})^{\an}=(\pi^{\mathrm{alg}\ast}R^nf^\mathrm{alg}_{\et\ast}\mathbb L^\mathrm{alg})^{\an},\]
	\[R^nf'_{\et\ast}\pi'^\ast\mathbb L= (R^nf'^{\mathrm{alg}}_{\et\ast}\pi'^{\mathrm{alg}\ast}\mathbb L^\mathrm{alg})^{\an}.\]
	Part 1) now follows from classical Proper Base Change. 
	
	Part 2) follows from 1) by letting $S'$ range through all perfectoid spaces over $S$.
\end{proof}


\begin{thebibliography}{dJvdP96}
	
	\bibitem[AIP18]{AIP}
	F.~Andreatta, A.~Iovita,  V.~Pilloni.
	\newblock {L}e halo spectral.
	\newblock {\em Ann. Sci. \'{E}c. Norm. Sup\'{e}r. (4)}, 51(3):603--655, 2018.
	
	\bibitem[Ber93]{Berkovich_EtaleCohom}
	V.~G. Berkovich.
	\newblock \'{E}tale cohomology for non-{A}rchimedean analytic spaces.
	\newblock {\em Inst. Hautes \'{E}tudes Sci. Publ. Math.}, (78):5--161 (1994),
	1993.
	
	\bibitem[BH22]{BhattHansen_Zar_constructible}
	B.~Bhatt,  D.~Hansen.
	\newblock The six functors for {Z}ariski-constructible sheaves in rigid
	geometry.
	\newblock {\em Compos. Math.}, 158(2):437--482, 2022.
	
	\bibitem[BL19]{BhattLurie}
	B.~Bhatt,  J.~Lurie.
	\newblock A {R}iemann-{H}ilbert correspondence in positive characteristic.
	\newblock {\em Camb. J. Math.}, 7(1-2):71--217, 2019.
	
	\bibitem[BS15]{bhatt-scholze-proetale}
	B.~Bhatt,  P.~Scholze.
	\newblock {T}he pro-\'{e}tale topology for schemes.
	\newblock {\em Ast\'{e}risque}, (369):99--201, 2015.
	
	\bibitem[dJvdP96]{deJongvdPut}
	A.~J. de~Jong,  M.~van~der Put.
	\newblock \'{E}tale cohomology of rigid analytic spaces.
	\newblock {\em Doc. Math.}, 1:No. 01, 1--56, 1996.
	
	\bibitem[Fal02]{Faltingsalmostetale}
	G.~Faltings.
	\newblock Almost \'{e}tale extensions.
	\newblock {\em Ast\'{e}risque}, (279):185--270, 2002.
	\newblock Cohomologies $p$-adiques et applications arithm\'{e}tiques, II.
	
	\bibitem[GR03]{GabberRamero}
	O.~Gabber,  L.~Ramero.
	\newblock {\em {A}lmost ring theory}, volume 1800 of {\em Lecture Notes in
		Mathematics}.
	\newblock Springer-Verlag, Berlin, 2003.
	
	\bibitem[Heu21]{heuer-diamantine-Picard}
	B.~Heuer.
	\newblock {D}iamantine {P}icard functors of rigid spaces.
	\newblock {\em Preprint, arXiv:2103.16557}, 2021.
	
	\bibitem[Heu22]{heuer-G-torsors-perfectoid-spaces}
	B.~Heuer.
	\newblock {$G$}-torsors on perfectoid spaces.
	\newblock {\em Preprint, arXiv:2207.07623}, 2022.
	
	\bibitem[HK]{HK}
	D.~Hansen,  K.~S. Kedlaya.
	\newblock Sheafiness criteria for {H}uber rings.
	\newblock {\em Preprint, available
		\href{https://kskedlaya.org/papers/criteria.pdf}{at this link}}.
	
	\bibitem[Hub94]{Huber-generalization}
	R.~Huber.
	\newblock A generalization of formal schemes and rigid analytic varieties.
	\newblock {\em Math. Z.}, 217(4):513--551, 1994.
	
	\bibitem[Hub96]{huber2013etale}
	R.~Huber.
	\newblock {\em \'{E}tale cohomology of rigid analytic varieties and adic
		spaces}.
	\newblock Aspects of Mathematics, E30. Friedr. Vieweg \& Sohn, Braunschweig,
	1996.
	
	\bibitem[Hub07]{HuberFiniteness}
	R.~Huber.
	\newblock A finiteness result for the compactly supported cohomology of rigid
	analytic varieties. {II}.
	\newblock {\em Ann. Inst. Fourier (Grenoble)}, 57(3):973--1017, 2007.
	
	\bibitem[HX24]{HX}
	B.~Heuer,  D.~Xu.
	\newblock $p$-adic non-abelian {H}odge theory for curves via moduli stacks.
	\newblock {\em Preprint, arXiv.org:2402.01365}, 2024.
	
	\bibitem[Kat73]{p-adicMSMF}
	N.~M. Katz.
	\newblock {$p$}-adic properties of modular schemes and modular forms.
	\newblock In {\em Modular functions of one variable, {III}}, pages 69--190.
	Lecture Notes in Math., Vol. 350. Springer, Berlin, 1973.
	
	\bibitem[Ked10]{Kedlaya_differential-equations}
	K.~S. Kedlaya.
	\newblock {\em {$p$}-adic differential equations}, volume 125 of {\em Cambridge
		Studies in Advanced Mathematics}.
	\newblock Cambridge University Press, Cambridge, 2010.
	
	\bibitem[Man22]{mann2022padic}
	L.~Mann.
	\newblock A $p$-adic 6-functor formalism in rigid-analytic geometry.
	\newblock {\em Preprint, arXiv:2206.02022}, 2022.
	
	\bibitem[Mum70]{MumfordAV}
	D.~Mumford.
	\newblock {\em {A}belian varieties}.
	\newblock Tata Institute of Fundamental Research Studies in Mathematics, No. 5.
	Published for the Tata Institute of Fundamental Research, Bombay; Oxford
	University Press, London, 1970.
	
	\bibitem[Pan22]{PanLocallyAnalytic}
	L.~Pan.
	\newblock On locally analytic vectors of the completed cohomology of modular
	curves.
	\newblock {\em Forum Math. Pi}, 10:Paper No. e7, 82, 2022.
	
	\bibitem[Sch12]{perfectoid-spaces}
	P.~Scholze.
	\newblock {P}erfectoid spaces.
	\newblock {\em Publ. Math. Inst. Hautes \'{E}tudes Sci.}, 116:245--313, 2012.
	
	\bibitem[Sch13a]{Scholze_p-adicHodgeForRigid}
	P.~Scholze.
	\newblock {$p$}-adic {H}odge theory for rigid-analytic varieties.
	\newblock {\em Forum Math. Pi}, 1:e1, 77, 2013.
	
	\bibitem[Sch13b]{ScholzeSurvey}
	P.~Scholze.
	\newblock {P}erfectoid spaces: {A} survey.
	\newblock In {\em Current developments in mathematics 2012}, pages 193--227.
	Int. Press, Somerville, MA, 2013.
	
	\bibitem[Sch15]{torsion}
	P.~Scholze.
	\newblock {O}n torsion in the cohomology of locally symmetric varieties.
	\newblock {\em Ann. of Math. (2)}, 182(3):945--1066, 2015.
	
	\bibitem[Sch22]{Sch18}
	P.~Scholze.
	\newblock {\'E}tale cohomology of diamonds.
	\newblock {\em Preprint, arXiv:1709.07343}, 2022.
	
	\bibitem[Sta]{StacksProject}
	{The Stacks Project Authors}.
	\newblock {T}he stacks project.
	\newblock 2023.
	
	\bibitem[SW20]{ScholzeBerkeleyLectureNotes}
	P.~Scholze,  J.~Weinstein.
	\newblock {\em {B}erkeley {L}ectures on $p$-adic {G}eometry}.
	\newblock Annals of Mathematics Studies. Princeton University Press, Princeton,
	NJ, 2020.
	
	\bibitem[Zav21]{Zavyalov_acoh}
	B.~Zavyalov.
	\newblock Almost coherent modules and almost coherent sheaves.
	\newblock {\em Preprint, arXiv:2110.10773}, 2021.
	
\end{thebibliography}
\end{document}